\DeclareOldFontCommand{\rm}{\normalfont\rmfamily}{\mathrm}
\DeclareOldFontCommand{\sf}{\normalfont\sffamily}{\mathsf}
\DeclareOldFontCommand{\tt}{\normalfont\ttfamily}{\mathtt}
\DeclareOldFontCommand{\bf}{\normalfont\bfseries}{\mathbf}
\DeclareOldFontCommand{\it}{\normalfont\itshape}{\mathit}
\DeclareOldFontCommand{\sl}{\normalfont\slshape}{\@nomath\sl}
\DeclareOldFontCommand{\sc}{\normalfont\scshape}{\@nomath\sc}
\tikzstyle{startstop} = [rectangle, rounded corners, minimum width=3cm, minimum height=1cm,text centered, text width=5cm, draw=black, fill=red!10]
\tikzstyle{startstop3} = [rectangle, rounded corners, minimum width=3cm, minimum height=1cm,text centered, text width=5cm, draw=black, fill=red!10]
\tikzstyle{startstop2} = [rectangle, rounded corners, minimum width=3cm, minimum height=1cm,text centered, text width=5cm, draw=black, fill=blue!30]
\tikzstyle{startstop7} = [rectangle, rounded corners, minimum width=5cm, minimum height=1cm,text centered, text width=5.2cm, draw=black, fill=blue!30]
\tikzstyle{io} = [trapezium, trapezium left angle=70, trapezium right angle=110, minimum width=3cm, minimum height=1cm, text centered, draw=black, fill=blue!30]
\tikzstyle{process} = [rectangle, minimum width=3cm, minimum height=1cm, text centered, draw=black, fill=orange!30]
\tikzstyle{estimator} = [circle, minimum width=1.5cm, minimum height=1.5cm, text centered, draw=black, fill=green!10]
\tikzstyle{arrow} = [thick,->,>=stealth]
\tikzstyle{arrow2} = [thick,-]
\tikzstyle{myboxnone} = [draw=none, fill=none, minimum width=\textwidth]
\def\dif{\mathrm{d}}
\def\BB{\mathcal{B}}
\def\DD{\mathcal{D}}
\def\GG{\mathcal{G}}
\def\HH{\mathcal{H}}
\def\II{\mathcal{I}}
\def\JJ{\mathcal{J}}
\def\LL{\mathcal{L}}
\def\MM{\mathcal{M}}
\def\NN{\mathcal{N}}
\def\PP{\mathcal{P}}
\def\Ind{\mathds{1}}
\def\Pas{\P\text{-a.s.}}
\def\stoch{\stackrel{\P}{\to}}
\renewcommand{\P}{\ensuremath{\mathbb{P}}}
\DeclareMathOperator*{\argmin}{arg\,min}
\newcommand{\vt}{\ensuremath{\vartheta}}
\newcommand{\E}{\ensuremath{\mathbb{E}}}
\theoremstyle{plain}
\newtheorem{theorem}{Theorem}[section]
\newtheorem{lemma}[theorem]{Lemma}
\newtheorem{proposition}[theorem]{Proposition}
\newtheorem{definition}[theorem]{Definition}
\newtheorem{corollary}[theorem]{Corollary}
\newtheorem{assumptionletter}{{\textbf{Assumption}}}
\theoremstyle{definition}
\newtheorem{example}[theorem]{Example}
\newtheorem{remark}[theorem]{Remark}
\def\C{\mathbb{C}}
\def\dif{\mathrm{d}}
\def\BB{\mathcal{B}}
\def\DD{\mathcal{D}}
\def\GG{\mathcal{G}}
\def\HH{\mathcal{H}}
\def\II{\mathcal{I}}
\def\JJ{\mathcal{J}}
\def\LL{\mathcal{L}}
\def\MM{\mathcal{M}}
\def\NN{\mathcal{N}}
\def\PP{\mathcal{P}}
\def\QQQ{\mathscr{Q}}
\def\Ind{\mathds{1}}
\def\Pas{\P\text{-a.s.}}
\newcommand{\wh}{\widehat}
\newcommand{\ov}{\overline}
\newcommand{\wt}{\widetilde}
\newcommand{\R}{\ensuremath{\mathbb{R}}}
\newcommand{\Q}{\ensuremath{\mathbb{Q}}}
\newcommand{\N}{\ensuremath{\mathbb{N}}}
\newcommand{\Z}{\ensuremath{\mathbb{Z}}}
\renewcommand{\P}{\ensuremath{\mathbb{P}}}
\renewcommand{\Q}{\ensuremath{\mathbb{Q}}}
\DeclareMathOperator{\e}{e}
\DeclareMathOperator{\Var}{Var}
\DeclareMathOperator{\Cov}{Cov}
\numberwithin{equation}{section}
\renewcommand{\labelenumi}{(\roman{enumi})}
\title{Robust estimation of stationary\\[2mm] continuous-time ARMA models\\[2mm] via indirect inference}
\author{Vicky Fasen-Hartmann \setcounter{footnote}{1}\thanks{Institute of Stochastics, Englerstra{\ss}e 2,
D-76131 Karlsruhe, Germany.} \label{fnote}%\emph{Email:}
%\href{mailto:vicky.fasen@kit.edu}{vicky.fasen@kit.edu}}
 \and  Sebastian Kimmig \thanks{Württembergische Versicherung AG, Gutenbergstraße 30
D-70176 Stuttgart,  Germany. }}
\date{}
\begin{document}
%
%%%%%%%%%%%%%%%%%  Titel und Autoreninformationen  %%%%%%%%%%%%%%%%%%%%%%%%%%%%%%%%
\maketitle
%
%%%%%%%%%%%%%%%%%  Abstract und AMS Classification  %%%%%%%%%%%%%%%%%%%%%%%%%%%%%%%
\begin{abstract}
In this paper we present a robust estimator for the parameters of a stationary continuous-time ARMA$(p,q)$ (CARMA$(p,q)$)
process sampled equidistantly which is not necessarily Gaussian. Therefore, an indirect estimation procedure is used.
It is an indirect estimation because
we first estimate the parameters of the auxiliary AR$(r)$ representation ($r\geq 2p-1$) of the sampled CARMA process
using a generalized M- (GM-)estimator. Since the map which maps the parameters of the auxiliary AR$(r)$ representation to
the parameters of the CARMA process is not given explicitly, a separate simulation part is necessary where the parameters
of the AR$(r)$ representation are estimated from
simulated CARMA processes. Then, the parameter which takes the minimum distance
between the estimated AR parameters and the simulated AR parameters gives an estimator for the CARMA parameters.
First, we show that under some standard assumptions
the GM-estimator for the AR$(r)$ parameters is consistent and asymptotically normally distributed.
Next, we prove that the indirect estimator is consistent and
asymptotically normally distributed as well using in the simulation part the asymptotically normally distributed LS-estimator.
The indirect estimator
satisfies several important robustness properties such as weak resistance, $\pi_{d_n}$-robustness and it has a bounded influence functional.
The practical applicability of our method is demonstrated through a simulation study with replacement outliers
and compared to the non-robust quasi-maximum-likelihood estimation method.
\end{abstract}

%\noindent
%\begin{tabbing}
%\emph{JEL Classification: } \=   C10, C32, C51
%\end{tabbing}
%\marginpar{abändern}

\noindent
\begin{tabbing}
\emph{AMS Subject Classification 2010: }\=Primary:  62F10, 62F12, 62M10
\\ \> Secondary:   60G10, 60G51
\end{tabbing}

\vspace{0.2cm}\noindent\emph{Keywords:} AR process, CARMA process, indirect estimator, influence functional, GM-estimator, LS-estimator, outlier, resistance, robustness

%%%%%%%%%%%%%%%%%  Einleitung  %%%%%%%%%%%%%%%%%%%%%%%%%%%%%%%%%%%%%%%%%%%%%%%%%
\section{Introduction}
The paper presents a robust estimator for the parameters of a discretely observed stationary continuous-time ARMA (CARMA) process. A  {\em weak ARMA$(p,q)$ process} in discrete-time is a weakly stationary solution of the stochastic difference equation
\begin{eqnarray} \label{ARMA}
    \phi(B)X_m=\theta(B)Z_m,\quad m\in\Z,
\end{eqnarray}
where $B$ denotes the backward shift operator (i.e. $BX_m=X_{m-1}$),
\begin{eqnarray*}
    \phi(z)=1-\phi_1z-\ldots-\phi_pz^p \quad \mbox{ and } \quad \theta(z)=1+\theta_1z+\ldots+\theta_qz^q
\end{eqnarray*}
are the autoregressive and the moving average polynomials, respectively, with $\phi_1,\ldots,\phi_p$, $\theta_1,\ldots,\theta_q\in\R$, $\phi_p,\theta_q\not=0$
and $(Z_m)_{m\in\Z}$ a \emph{weak white noise}, i.e., $(Z_m)_{m\in\Z}$ is an uncorrelated sequence with constant mean and constant variance. If $(Z_m)_{m\in\Z}$ is even an independent and identically distributed (i.i.d.) sequence then we call $(X_m)_{m\in\Z}$ a  {\em strong ARMA process}. A natural continuous-time analog of this difference equation with i.i.d. noise $(Z_m)_{m\in\Z}$ is
 the formal $p$-th order stochastic differential equation
\begin{eqnarray} \label{CARMA}
 a({ D})Y_t=c({ D}) DL_t,\quad t\in\R,
\end{eqnarray}
where ${ D}$ denotes the differential operator with respect to $t$, \[
 a(z)= z^p+a_1z^{p-1}+\ldots+a_p\qquad\text{and}\qquad c(z)=c_0z^q+c_1z^{q-1}+\ldots+c_q
\]
are the autoregressive and the moving average polynomials, respectively, with
$p>q$, and \linebreak $a_1,\,\ldots,\,a_p,\,c_0,\,\ldots,\,c_q\in\R,\,a_p,\,c_0\neq 0$. The process $(L_t)_{t\in\R}$ is a Lévy process, i.e.,
 a stochastic process with $L_0=0$ almost surely, independent and stationary increments  and almost surely c\`{a}dl\`{a}g
sample paths.
However, this is not the formal definition of a {\em CARMA$(p,q)$ process } because a Lévy process is not differentiable. The idea is more that the
differential operator on the autoregressive side act like an integration operator on the moving average side. The precise definition of a CARMA process
 is given
later. \label{pageref 2}
A rigorous foundation for CARMA$(p,0)$ processes is provided in \citet{Bergstrom:1983,Bergstrom:1984} and for CARMA$(p,q)$ processes in \citet{Brockwell2001}. A Lévy driven CARMA process can be defined  via a controller canonical state space representation.
Necessary and sufficient conditions for the existence of strictly stationary CARMA processes are given in \citet{brockwelllindner}.
From \cite{brockwelllindner} (see as well \citet{Thornton:Chambers:2017}) it is also well  known that a discretely sampled stationary CARMA process $(Y_{mh})_{m\in\Z}$ ($h>0$ fixed) admits a weak ARMA
representation, but unfortunately this is in general for Lévy driven models not a strong ARMA representation.
 For an overview and a comprehensive list of references on CARMA processes we refer to \citet{Brockwell:2014} and \citet{ChambersMcCrorieThornton}.

\label{cCARMA}In many situations it is more appropriate to specify a model in continuous time rather than in
discrete time. In recent years the interest in these models has increased with the availability
of high-frequency data in finance and turbulence but  as well by irregularly spaced data, missing observations
or situations when estimation and inference at various frequencies is to be carried out.
It is not surprising that stationary CARMA processes are applied in many areas  as, e.g., signal processing and control (cf. \cite{GarnierWang2008,LarssonMossbergSoederstroem2006}),
high-frequency financial econometrics (cf. \cite{Todorov2009})  and financial mathematics (cf. \cite{Benth:Klu:Muller:Vos,BenthKoekebakkerZakamouline2010}).%,Garciaetal2010
%%wegmachen
\label{litZad} The first attempts for maximum-likelihood estimation of Gaussian
stationary and non-stationary MCAR$(p)$ models are going back \cite{Harvey:Stock:1985,Harvey:Stock:1988,HarveyStock1989} and
were further explored in the well-known paper of \cite{Zadrozny1988}. \cite{Zadrozny1988} investigates continuous-time Brownian motion driven ARMAX models
and allows stocks and flows at different frequencies, and higher order integration.
There exist a few papers dealing with the asymptotic properties of parameter estimators of discretely sampled stationary CARMA models as
\cite{schlemmstelzer,brockwelldavisyang} and  for non-stationary CARMA models \citet{fasenscholz}. The papers have in common that they use a
 quasi maximum likelihood estimator (QMLE). However, it is well known  that QMLE are  sensitive
 to outliers and irregularities in the data. Hence, we are looking for an alternative robust approach.

In statistics the most fundamental question when considering {\em robustness} of an estimator is how the estimator behaves when the data
does not satisfy the model assumptions
(cf.  \cite{huberronchetti,maronna,Olive}). In the case of small deviations
from the model assumptions a robust estimator should give estimations not too far away from the estimations of the
original model. The most common and best understood robustness property is {\em distributional robustness} where the shape of the true
underlying distribution deviates slightly from the assumed model. The amount of measures for robustness is huge, e.g., qualitative robustness,
quantitative robustness, optimal robustness, efficiency  robustness and the breakdown point, to mention only a few. In contrast to the case of i.i.d. random variables,
in the  case of time series, there  exist several types of possible contamination of the data which makes it more difficult to characterize robustness.
In particular, for AR processes it is well-known that the GM-estimator (cf. \cite{boentefraimanyohai,kunsch,martin}) and the RA-estimator (cf. \cite{ben}) satisfy different robustness properties  in contrast
to $M$- or $LS$-estimators which are sensitive to the presence of additive outliers (cf. \cite{denbymartin}). However, for general ARMA models the GM-estimator and the RA-estimator are again sensitive to outliers and hence, non-robust (cf. \cite{bustos1986}).
\citet{mulerpenayohai} develop a robust estimation procedure for ARMA models by calculating the residuals of the ARMA models with the help
of BIP-ARMA models. For their result it is essential to have a strong ARMA model. Unfortunately the results can not easily be extended to weak ARMA models which we have in our context.

In this paper we use the indirect inference method originally proposed by \citet{smith} for nonlinear dynamic economic models.  That paper was extended by \citet{GallantTauchen96} and \citet{gourierouxmonfort2} (see also the overview in \citet{gourierouxmonfort}) for models with intractable likelihood functions  and moments. If the likelihood function and moments are intractable   maximum likelihood estimation  and generalized methods of moments are infeasible.
The authors applied the indirect inference method to macroeconomics, microeconomics, finance and auction models; see as well \citet{Monfort1996,PhillipsYu2009} for applications to
continuous-time models, \citet{GourierouxRenaultTouzi,Kypriacou} for applications to time series models and \citet{Monfardini1998} for applications to stochastic
volatility models. % and \citet{jiang2004} for applications to biostatistics and epidemiology.
In addition indirect inference is used for bias reduction in finite samples as, e.g., in \citet{GourierouxRenaultTouzi,GouPhillipsYug,Yu2011SimulationBasedEM,Kypriacou,Cklu:Haug:Thiago}. \label{pbias} %PhillipsYu2009b
Our motivation for the indirect inference method is robust estimation (cf. \citet{delunagenton,delunagenton2,Kypriacou}).
An alternative approach for bias correction is given in \citet{WangPhillipsYu} for univariate and multivariate diffusion models.
For estimators of the mean revision parameter based on the Euler
approximation and  the trapezoidal approximation for discretization the authors calculate the bias  and relate it to the estimation bias and discretization  bias.
\label{pagerefWang}

 The core idea of the indirect estimation method is to avoid estimating the parameters of interest directly and instead fit an auxiliary model to the data, estimate the parameters
 of this auxiliary model and then  use this estimates with simulated data to construct an estimator for the original parameter of interest (see \citet{delunagenton} for a schematic overview over the indirect estimation method). \citet{delunagenton,delunagenton2} recognized that it is possible to construct robust estimators via this approach, even for model classes where direct robust estimation is difficult. The reason is
that it is sufficient if the parameters of the auxiliary model are estimated by a robust estimation method. Therefore, \citet{delunagenton} present an indirect estimation procedure for
strong ARMA processes (without detailed assumptions and rigorous proofs). They fit an AR$(r)$ process to the ARMA model and estimate the parameters of the AR$(r)$ process with a GM-estimator.
We present a similar approach in our paper for the estimation of the CARMA parameters. Since the discretely sampled stationary CARMA process admits a
weak ARMA representation instead of a strong ARMA representation several proofs have to be added and identifiability issues have to be taken into account.

The paper is structured as follows. In \Cref{sec:Preliminaries}, we first present our parametric family of stationary CARMA processes
and our model assumptions. Furthermore, we motivate that for any $r\geq 2p-1$ any stationary CARMA process has an AR$(r)$ representation.
Then, in \Cref{sec:indirect estimation}, we introduce the indirect estimation procedure and give sufficient criteria for indirect estimators
to be consistent and asymptotically normally distributed independent of the model; we have to assume at least consistent and asymptotically normally distributed estimators  in the estimation part and
in the simulation part of the indirect estimation method.
Since the auxiliary AR$(r)$ parameters of the sampled CARMA process are estimated by a GM-estimator we give an introduction into  GM-estimators
 in \Cref{sect:GM} and derive consistency and asymptotic normality of this estimator in our setup.
 Moreover, we see that the GM-estimator is still asymptotically
 normally distributed for CARMA processes with outliers as additive outliers and replacement outliers. Our conclusions extend the results of \citet{bustos}.
Finally, in \Cref{sec:indirect estimator CARMA}, we are able to show that the indirect estimator for the parameters of the discretely observed stationary CARMA process is consistent and asymptotically normally distributed  using in the estimation part a GM-estimator
and in the simulation part a LS-estimator. Several robustness properties of this estimator are derived as well as
qualitative robustness and a bounded influence functional.
After all, the simulation part, in \Cref{sect:Simulation}, shows the practical applicability of our indirect estimator
and its robustness properties. We compare our estimator with the non-robust QMLE.
Conclusions are given in \Cref{sec:Conclusion}.
The paper ends with the proofs of the results in \Cref{sec:Proofs}.
\vspace*{-0.2cm}
%Robustness properties of the indirect
%estimator are topic of the Supporting Information~\ref{sect:RobProp}. In particular,
%are studied.

%The present paper is mainly based on the PhD-thesis \citet{Kimmig}. The same arguments for the indirect estimation as in \citet{Kimmig} were later applied in \citet{Cklu:Haug:Thiago} for the estimation of the COGARCH process using only the LS- and the Yule-Walker-estimator and without investigating any robustness properties.

\subsubsection*{Notation} \vspace*{-0.2cm}
We use as norms the Euclidean norm $ \lVert\cdot \rVert$ in $\R^d$ and its operator $\lVert\cdot \rVert$  in $\R^{m\times d}$ which is submultiplicative.
 For a matrix $A\in\R^{m\times d}$ we denote by $A^T$ its transpose.
For a matrix function $f(\vt)$ in $\R^{m\times d}$ with $\vt\in\R^s$ the gradient with respect to the parameter vector $\vt$
 is $\nabla_{\vartheta} f(\vt)=\frac{\partial\text{vec}(f(\vt))}{\partial\vartheta^T}\in\R^{dm\times s}$ and similarly
 $\nabla^2_{\vartheta} f(\vt)=\frac{\partial\text{vec}(\nabla_\vt f(\vt))}{\partial\vartheta^T}\in\R^{dms\times s}$.
Finally,
we write $\stackrel{\DD}{\longrightarrow}$ for weak convergence and $\stoch$ for convergence in probability. In general $C$ denotes a constant which may change from line to line.

\section{Preliminaries} \label{sec:Preliminaries} \vspace*{-0.2cm} %wegmachen

\subsection{The CARMA model} \vspace*{-0.2cm} %wegmachen

In this paper we consider  a parametric family of stationary CARMA processes. Let $\Theta\subseteq \R^{N(\Theta)}$ ($N(\Theta)\in\N$) be a parameter space, $p\in\N$ be fixed and for any
$\vt\in\Theta$ let  $a_1(\vt),\,\ldots,\,a_p(\vt),$ \linebreak $\,c_0(\vt),\,\ldots,\,c_{p-1}(\vt)\in\R,\,a_p(\vt)\neq 0$ and   $c_j(\vt)\not=0$ for some $j\in\{0,\ldots,p-1\}$.
Furthermore, define
\begin{eqnarray*} \label{A}
 A_\vt&:=&
 \begin{pmatrix}
  0      & 1            & 0            & \ldots & 0            \\
  0      & 0            & 1            & \ddots & \vdots       \\
  \vdots & \vdots       & \ddots       & \ddots & 0            \\
  0      & 0            & \ldots       & 0      & 1            \\
  -a_p(\vt)   & -a_{p-1}(\vt)     & \ldots       & \ldots & -a_1(\vt)
 \end{pmatrix}\in \R^{p\times p},
 \end{eqnarray*}
 \begin{eqnarray*}
 q(\vt)&=&\sup\{j\in\{0,\ldots,p-1\}:\,c_l(\vt)=0\,\,\forall\, l>j\} \quad \text{ with }\quad \sup\emptyset:=p-1,\\
 c_\vt&:=&(c_{q(\vt)}(\vt),\,c_{q(\vt)-1}(\vt),\,\ldots,\,c_{0}(\vt),0,\ldots,0)^T\in\R^p.
\end{eqnarray*}
%Due to \citet{Brockwell2001} (cf. \citet{Zadrozny1988} for possible non-stationary Gaussian CARMA processes in the econometric literature)
The CARMA process $(Y_t(\vt))_{t\in\R}$ is then defined  via the controller canonical state space representation:
Let $(X_t(\vt))_{t\in\R}$ be a strictly stationary solution to the stochastic differential equation
\begin{subequations} \label{Equation 1.1} \label{2.1}
 \begin{equation} \label{Equation 1.1a}
  {\rm d}X_t(\vt) = A_\vt X_t(\vt)\,{\rm d}t + e_p\,{\rm d}L_t,\quad t\in\R,
 \end{equation}
 where $e_p$ denotes the $p$-th unit vector in $\R^p$. Then the process
 \begin{equation} \label{Equation 1.1b}
  Y_t(\vt) := c_\vt^T X_t(\vt),\quad t\in\R,
 \end{equation}
\end{subequations}
 is said to be a \textit{(stationary) CARMA process} of order $(p,\,q(\vt))$.  Rewriting \eqref{Equation 1.1} line by line $(Y_t(\vt))_{t\in\R}$ can be interpreted
 as solution of the differential equation \eqref{CARMA}; see \citet{Brockwell2001,MarquardtStelzer2007}. \label{pageref 3}
 This means that
 in our parametric family of CARMA processes the order of the autoregressive polynomial is fixed to $p$ but the order of the moving average polynomial
 $q(\vt)$ may change. In addition, we investigate only stationary CARMA processes.

 Furthermore, we have the discrete-time observations  $Y_h,\ldots,Y_{nh}$ of the CARMA process $(Y_t)_{t\in\R}=(Y_t(\vt_0))_{t\in\R}$
 with fixed grid distance $h>0$. Hence, the true model parameter is $\vt_0$.
 The aim of this paper is to receive from the observations $Y_h,\ldots,Y_{nh}$ an estimator for $\vt_0$.
 Throughout the paper we will assume that the following \autoref{as_D} holds.

\setcounter{assumptionletter}{0}
\begin{assumptionletter}\strut
         \label{as_D}
         \renewcommand{\theenumi}{(A.\arabic{enumi})}
         \renewcommand{\labelenumi}{\theenumi}
         \begin{enumerate}
                 \item \label{as_D1} The parameter space $\Theta$ is a compact subset of $\R^{N(\Theta)}$.
                 \item \label{as_D8} The true parameter $\vt_0$ is an element of the interior of $\Theta$.
                 \item \label{as_D2}  $\E [L_1] = 0$, $0<\E L_1 ^2=\sigma_L^2 < \infty$ and there exists a $\delta > 0$ such
                    that $\E| L_1 |^{4+\delta} < \infty$.
                 \item \label{as_D3} The eigenvalues of $A_\vt$ have strictly negative real parts.
                 %\item \label{as_D4} The functions $\vt \mapsto A_\vt$ and  $\vt \mapsto c_\vt$ are three times continuously differentiable.
                 \item \label{as_D5} For all $\vt \in \Theta$ the zeros of $c_\vt(z)=c_0(\vt)z^{q(\vt)}+c_1(\vt)z^{q(\vt)-1}+\ldots +c_{q(\vt)}$ are different from the eigenvalues of $A_\vt$.
                 \item \label{as_D6} For any $\vt,\vt'\in\Theta$ we have $(c_\vt,A_\vt)\not=(c_{\vt'},A_{\vt'})$. %The family of CARMA processes $(Y_\vt(t))_{t\in\R}$ is identifiable from the spectral density.
                 \item \label{as_D7} For all $\vt \in \Theta$ the spectrum of $A_\vt$ is a subset of $\{ z \in \C: -\frac{\pi}{h} < \text{Im}(z) < \frac{\pi}{h} \}$ where $\text{Im}(z)$ denotes the imaginary part of $z$.
                 \item \label{as_D9} The maps $\vt\mapsto A_\vt$ and $\vt\mapsto c_\vt$ are three times continuous differentiable.
          %       \item \label{as_D10} For every $\epsilon > 0$ there exists a
%$\delta(\epsilon) > 0$ such that
%\begin{equation*}
 %     $$\mathscr{Q}(\vt^\ast) \leq \min_{\vt \in B_\epsilon(\vt^*)^c \cap \Theta}   \mathscr{Q}(\vt)    - \delta(\epsilon),$$
%\end{equation*}
%where $B_\epsilon(\vt^*)$ is the open ball with center $\vt^*$ and radius $\epsilon$.
% \item \label{as_D11} The Fisher information matrix  of the quasi maximum likelihood estimator is non-singular.
         \end{enumerate}
\end{assumptionletter}

\begin{remark} \label{Remark 2.1}
$\mbox{}$
\begin{enumerate}
    \item \ref{as_D1} and \ref{as_D8} are standard assumptions in point estimation theory.
    \item \ref{as_D3} guarantees that there exists a stationary solution of the state process \eqref{Equation 1.1a} and hence, a stationary CARMA process $(Y_t(\vt))_{t\in\R}$ (see \citet{MarquardtStelzer2007}). For this reason we can and will assume throughout the paper that $(Y_t(\vt))_{t\in\R}$ is stationary.
        The assumption of a stationary CARMA process $(Y_t(\vt))_{t\in\R}$ is essential for the indirect estimation approach of this paper.
    \item A consequence of \ref{as_D3},  \ref{as_D9}, the compactness of $\Theta$  and the fact that the eigenvalues of a
        matrix are continuous functions of its entries (cf. \citet[Fact 10.11.2]{bernstein}) is $\sup_{\vt\in\Theta}\max\{|\lambda|:\lambda \text{ is eigenvalue of }\e^{A_\vt}\}<1$ and hence, $\sup_{\vt\in\Theta}\|\e^{A_\vt u}\|\leq C\e^{-\rho u}$ for some $C,\rho>0$.
    \item Due to \ref{as_D5} the state space representation \eqref{Equation 1.1} of the CARMA process is minimal (cf. \citet[Proposition 12.9.3]{bernstein} and
            \citet[Theorem 2.3.3]{hannandeistler}).
    \item A consequence of \ref{as_D5} and \ref{as_D6} is that the family of stationary CARMA processes $(Y_t(\vt))_{t\in\R}$ is identifiable from their spectral densities and in combination with
        \ref{as_D7} that the same is true for the discrete-time process  $(Y_{mh}(\vt))_{m\in\Z}$ (cf. \citet[Theorem 3.13]{schlemmstelzer}).
    \item The CARMA process has to be sampled sufficiently finely to ensure that \ref{as_D7} holds  so that the parameters can be identified from the discrete data.
\end{enumerate}
\end{remark}

In the following we denote the autocovariance function of the stationary CARMA process $(Y_t(\vt))_{t\in\R}$ as $(\gamma_\vt(t))_{t\in\R}$ which has by
\citet[Proposition 3.1]{schlemmstelzer}  the form
\begin{eqnarray} \label{ACF}
    \gamma_{\vt}(t) =\Cov(Y_{s+t}(\vartheta),Y_s(\vartheta))= c_\vt^T \e^{A_\vt t} \Sigma_\vt c_\vt, \quad s\in \R,\,t\geq 0,
\end{eqnarray}
with $\Sigma_\vt =  \sigma_{L}^2\int_0^\infty \e^{A_\vt u}e_p  e_p^T \e^{A_\vt u} \dif u$. Due to \autoref{as_D} the autocovariance function is three times
continuous differentiable as well.

\subsection{The AR$(r)$ representation of a stationary CARMA process}\label{sec:IndEst}

% As usual, $n$ observations are given in the form of $Y^n = (Y(h), \ldots, Y(nh))$.
First, we define the auxiliary AR($r$) representation of the sampled CARMA process $(Y_{mh}(\vt))_{m \in \Z}$.
\begin{proposition}\label{auxiliaryprop}
For every $\vt \in \Theta$ and every $r \geq 2p-1$, there exists  a unique
$$\pi(\vt) := (\pi_{1}(\vt), \ldots, \pi_{r}(\vt), \sigma(\vt))\in\R^{r}\times\left[0,\infty\right)$$
such that
\begin{equation}\label{chap8:AuxRep}
 U_{m}(\vt) := Y_{mh}(\vt) - \sum_{k=1}^r \pi_{k}(\vt) Y_{(m-k)h}(\vt)
\end{equation}
is stationary with $\E[ U_{1}(\vt)] = 0$, $\Var( U_{1}(\vt)) = \sigma^2(\vt)$ and
\begin{equation}\label{chap8:DefAuxParam2}
\E \left[ U_{m}(\vt)  Y_{(m-k)h}(\vt) \right] = 0 \quad \text{ for } k=1, \ldots, r.
\end{equation}
We call $\pi(\vt) $ the \emph{auxiliary parameter} of the AR$(r)$ representation of $(Y_{mh}(\vt))_{m \in \Z}$.
\end{proposition}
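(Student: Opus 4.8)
The plan is to read the four requirements as the defining properties of the $L^2$-orthogonal projection of $Y_{mh}(\vt)$ onto the linear span of its $r$ immediate past values, and to obtain the coefficients by solving the associated Yule--Walker equations. First I would record that by \ref{as_D2} we have $\E[L_1]=0$, hence $\E[Y_{mh}(\vt)]=0$, and that $(Y_{mh}(\vt))_{m\in\Z}$ is stationary with autocovariances $\gamma(k):=\gamma_\vt(kh)=\Cov(Y_{mh}(\vt),Y_{(m-k)h}(\vt))$ as in \eqref{ACF}. For \emph{any} real coefficients $\pi_1(\vt),\dots,\pi_r(\vt)$ the process $U(\vt)$ in \eqref{chap8:AuxRep} is a fixed finite linear filter of $(Y_{mh}(\vt))_{m\in\Z}$ and is therefore automatically stationary with $\E[U_1(\vt)]=0$. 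Consequently the stationarity and zero-mean requirements impose nothing, and the entire content of the statement resides in the orthogonality relations \eqref{chap8:DefAuxParam2}.

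Second I would convert \eqref{chap8:DefAuxParam2} into a linear system. Inserting \eqref{chap8:AuxRep} and using $\E[Y_{mh}(\vt)Y_{(m-k)h}(\vt)]=\gamma(k)$ together with $\gamma(-k)=\gamma(k)$ yields
\begin{equation*}
\sum_{j=1}^{r}\pi_j(\vt)\,\gamma(k-j)=\gamma(k),\qquad k=1,\dots,r,
\end{equation*}
i.e. $\Gamma_r\,(\pi_1(\vt),\dots,\pi_r(\vt))^T=(\gamma(1),\dots,\gamma(r))^T$ with the symmetric Toeplitz matrix $\Gamma_r:=(\gamma(k-j))_{k,j=1}^{r}$. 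Existence and uniqueness of $(\pi_1(\vt),\dots,\pi_r(\vt))$ is thus equivalent to nonsingularity of $\Gamma_r$. Granting this, I would set $\sigma^2(\vt):=\Var(U_1(\vt))\geq 0$; since a variance is nonnegative, $\sigma(\vt)$ is its unique nonnegative square root, which settles uniqueness of the full tuple $\pi(\vt)\in\R^r\times[0,\infty)$, and substituting the solved coefficients back into \eqref{chap8:AuxRep} shows at once that all four properties hold, giving existence.

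The step carrying the real work, and the one I expect to be the main obstacle, is the nonsingularity of $\Gamma_r$; here I would exploit the CARMA structure through the spectral density. By \ref{as_D3} the eigenvalues of $A_\vt$ have strictly negative real parts, so by \eqref{ACF} the map $t\mapsto\gamma_\vt(t)$ decays exponentially and $(\gamma(k))_{k\in\Z}$ is absolutely summable; hence a continuous spectral density $f_\vt$ on $[-\pi,\pi]$ exists, equal to the $h$-aliasing of the continuous-time CARMA spectral density, which is proportional to $|c_\vt(\mathrm{i}\omega)|^2/|a_\vt(\mathrm{i}\omega)|^2$. For every $0\neq a=(a_1,\dots,a_r)^T\in\R^r$ one has
\begin{equation*}
a^T\Gamma_r a=\int_{-\pi}^{\pi}\Bigl|\sum_{k=1}^{r}a_k\,\e^{\mathrm{i}k\lambda}\Bigr|^2 f_\vt(\lambda)\,\dif\lambda>0,
\end{equation*}
because the trigonometric polynomial $\sum_{k=1}^{r}a_k\e^{\mathrm{i}k\lambda}$ vanishes at only finitely many $\lambda$, while $f_\vt$ is strictly positive: its value at each $\lambda$ is a sum of nonnegative terms that cannot all vanish, since $c_\vt$ has only finitely many roots. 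Thus $\Gamma_r$ is positive definite and the Yule--Walker system has a unique solution. An equivalent, more elementary route to the same conclusion is to invoke the standard fact that a stationary sequence with $\gamma(0)=\Var(Y_0(\vt))>0$ and $\gamma(k)\to 0$ has nonsingular covariance matrices of every order, the positivity $\gamma(0)>0$ following because the controllability Gramian $\Sigma_\vt$ is positive definite and $c_\vt\neq 0$.
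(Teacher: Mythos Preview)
Your proof is correct and follows essentially the same route as the paper: reduce everything to the Yule--Walker system and show that the Toeplitz covariance matrix $\Gamma_r$ is nonsingular. The paper's argument is precisely your ``equivalent, more elementary route'': it uses $\gamma_\vt(0)>0$ (from $\Sigma_\vt$ positive definite and $c_\vt\neq 0$) together with $\gamma_\vt(mh)\to 0$, then invokes \citet[Proposition~5.1.1]{brockwell} to conclude nonsingularity for every $r\in\N$, and \citet[\S 8.1]{brockwell} for the unique Yule--Walker solution. Your primary route via strict positivity of the aliased spectral density is a valid alternative that the paper does not use; it is a little heavier machinery but gives the same conclusion.
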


\begin{remark}\label{piremark}
$U_{m}(\vt)$ can be interpreted as the error of the best linear predictor of $Y_{\vt}(mh)$ in terms of \linebreak $Y_{(m-1)h}(\vt), \ldots, Y_{(m-r)h}(\vt)$.
% It is by construction, the best linear predictor of $Y(nh)$ in terms of $Y(h(n-1)), \ldots, Y(h(n-r))$.
Per construction, however, the sequence  $(U_{m}(\vt))_{m \in \Z}$ is not an uncorrelated sequence, $U_{m}(\vt)$ is only uncorrelated with $Y_{(m-1)h}(\vt), \ldots, Y_{(m-r)h}(\vt)$.
\end{remark}

\begin{definition}
Let $\Pi \subseteq \R^{r+1}$ be the parameter space containing all possible parameter vectors of stationary  AR($r$) processes. The map $\pi:\Theta\to\Pi$ with
$\vt \mapsto \pi(\vt)$ and $\pi(\vt)$ as given in \autoref{auxiliaryprop} is called the \emph{link function} or binding function.
\end{definition}

\begin{lemma}\label{defbindfct}
Let  $r \geq 2p-1$. Then, $\pi(\vt)$ is injective and three times continuously differentiable.
\end{lemma}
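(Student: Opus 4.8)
The plan is to prove the two assertions separately, routing both through the sampled autocovariances $\gamma_\vt(kh)$, which by \eqref{ACF} and the remark following it are three times continuously differentiable in $\vt$.

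For differentiability I would read $\pi(\vt)$ off the Yule--Walker system. The orthogonality conditions \eqref{chap8:DefAuxParam2} are precisely $\sum_{k=1}^r \pi_k(\vt)\gamma_\vt((j-k)h)=\gamma_\vt(jh)$ for $j=1,\dots,r$ (using $\gamma_\vt(-t)=\gamma_\vt(t)$), i.e.\ $\Gamma_\vt\,(\pi_1(\vt),\dots,\pi_r(\vt))^T=(\gamma_\vt(h),\dots,\gamma_\vt(rh))^T$ with $\Gamma_\vt:=(\gamma_\vt((i-j)h))_{i,j=1}^r$ the covariance matrix of $(Y_{(m-1)h}(\vt),\dots,Y_{(m-r)h}(\vt))$. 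Since the driving Lévy noise has $\sigma_L^2>0$, the sampled process is purely non-deterministic (its spectral density is strictly positive), so $\Gamma_\vt$ is positive definite, $\det\Gamma_\vt>0$, and Cramer's rule makes the entries of $\Gamma_\vt^{-1}$ three times continuously differentiable in $\vt$; hence so is $(\pi_1(\vt),\dots,\pi_r(\vt))^T=\Gamma_\vt^{-1}(\gamma_\vt(h),\dots,\gamma_\vt(rh))^T$. For the innovation I would use $\sigma^2(\vt)=\gamma_\vt(0)-\sum_{k=1}^r\pi_k(\vt)\gamma_\vt(kh)$, which is then $C^3$ and strictly positive by the same non-determinism; as $\sqrt{\,\cdot\,}$ is smooth on $(0,\infty)$, $\sigma(\vt)$ is $C^3$ as well.

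For injectivity I would first invert the Yule--Walker relations in the opposite direction: the $r$ equations above, together with the formula for $\sigma^2(\vt)$, express $\gamma_\vt(0),\dots,\gamma_\vt(rh)$ uniquely in terms of $\pi(\vt)$ (the inverse of the Levinson--Durbin recursion for a stationary AR$(r)$ parameter). Thus $\pi(\vt)=\pi(\vt')$ forces $\gamma_\vt(kh)=\gamma_{\vt'}(kh)$ for $k=0,\dots,r$. The crucial step, and the point where $r\geq 2p-1$ enters, is to upgrade this to all lags: by \eqref{ACF}, $\gamma_\vt(kh)=c_\vt^T(\e^{A_\vt h})^k\Sigma_\vt c_\vt$ is a sum of at most $p$ terms of the form (polynomial in $k$) times $\mu^k$, with $\mu$ ranging over the eigenvalues of $\e^{A_\vt h}$. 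Hence $k\mapsto \gamma_\vt(kh)-\gamma_{\vt'}(kh)$ is an exponential--polynomial sum built from at most $2p$ distinct bases, and a (confluent) Vandermonde argument shows that a nonzero sum of this type cannot vanish at $2p$ consecutive integers. Since $r+1\geq 2p$, the difference already vanishes on $k=0,\dots,r$ and therefore for every $k\geq 0$, and by symmetry for every $k\in\Z$.

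Equal autocovariances at all lags mean that $(Y_{mh}(\vt))_{m\in\Z}$ and $(Y_{mh}(\vt'))_{m\in\Z}$ have identical spectral densities, so the identifiability statement in \autoref{Remark 2.1}(v) (which rests on \ref{as_D5}--\ref{as_D7}) yields $\vt=\vt'$. I expect the main obstacle to be the exponential-sum step: one must make precise that the sampled autocovariance is genuinely governed by at most $p$ modes, correctly counting multiplicities when $A_\vt$ has repeated eigenvalues, so that $2p$ matching lags pin down the entire sequence. Everything else is bookkeeping around the Yule--Walker system, the positivity of $\det\Gamma_\vt$ and $\sigma^2(\vt)$, and the cited identifiability result.
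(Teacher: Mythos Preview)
Your argument is correct. The $C^3$ part matches the paper's: both read $\pi(\vt)$ off the Yule--Walker system \eqref{pi} and use that the Toeplitz matrix $\Gamma_\vt$ is positive definite together with the $C^3$ dependence of $\gamma_\vt(kh)$ on $\vt$.

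For injectivity the paper takes a different route. It factors $\pi=\pi_3\circ\pi_2\circ\pi_1$ through the weak ARMA$(p,p-1)$ representation \eqref{chap8:ARMAP}: $\pi_1$ sends $\vt$ to the ARMA coefficients (injective by the spectral identifiability of \autoref{Remark 2.1}(v)), $\pi_2$ sends those to $(\gamma_\vt(0),\dots,\gamma_\vt(rh))$ (injective for $r\geq 2p-1$ by an equation-counting argument based on the ARMA Yule--Walker recursion), and $\pi_3$ is the Yule--Walker map to the AR$(r)$ parameters (injective by the same linear-system inversion you use). Your proof and the paper's thus share the endpoints---the Yule--Walker inversion and the spectral identifiability---but differ in the middle: instead of passing through the ARMA parametrisation, you exploit directly that $\gamma_\vt(kh)=c_\vt^T(\e^{A_\vt h})^k\Sigma_\vt c_\vt$ obeys a linear recurrence of order $p$ (Cayley--Hamilton), so the difference of two such sequences obeys one of order $2p$, and $r+1\geq 2p$ matching initial values force equality at all lags. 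Your route is more self-contained since it avoids setting up the ARMA$(p,p-1)$ representation, while the paper's decomposition makes the ARMA structure explicit, which it reuses later (cf.\ \autoref{Remark 2.5}). Regarding the obstacle you flag: the recurrence viewpoint makes the multiplicity bookkeeping vanish, because Cayley--Hamilton for $\e^{A_\vt h}$ already handles Jordan blocks, so no separate confluent-Vandermonde analysis is needed.
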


Finally, due to \Cref{defbindfct} we suppose throughout the paper:

\begin{assumptionletter} \label{Assumption B}
Let $r \geq 2p-1$.
\end{assumptionletter}

\section{Indirect estimation}\label{sec:indirect estimation}

For fixed $r$, denote by $\widehat{\pi}_n$ an estimator of $\pi(\vt_0)$ that is calculated from the observations \linebreak $\mathcal{Y}^n=(Y_{h},\ldots,Y_{nh})$.
If we were able to analytically invert the link function $\pi$  and calculate $\pi^{-1}(\widehat{\pi}_n)$, then $\pi^{-1}(\widehat{\pi}_n)$ would be an estimator for $\vt_0=\pi^{-1}(\pi(\vt_0))$. However, this is not possible in general since no analytic representation of $\pi^{-1}$ exists.
To overcome this problem, we  perform a second estimation, which is based on simulations, and constitutes the other building block of indirect estimation.
We fix a number $s \in \N$ and simulate a sample path of length $s n$ of a L\'{e}vy process $(L_t^S)_{t \in \R}$ with $\E L_1^S=0$ and $\E(L_1^S)^2=\sigma_L^2$. Then, for a fixed parameter $\vt \in \Theta$ we generate a sample path of the associated CARMA process $(Y_t^S(\vt))_{t\in\R}$ using the simulated path $(L^S_t)_{t\in\R}$. This gives us a vector of ``pseudo--observations'' $\mathcal{Y}_S^{sn}(\vt)=(Y_{h}^S(\vt), \ldots, Y_{snh}^S(\vt))$ of length $sn$. From this observation $\mathcal{Y}_S^{sn}(\vt)$ we estimate again $\pi(\vt)$ by an estimator
$ \wh{\pi}^{\text{S}}_{sn}(\vt)$.
 The idea is now to choose that value of $\vt$ as estimator for $\vt_0$ which minimizes a suitable distance between $\wh{\pi}_n$ and $\wh \pi^{\text{S}}_n(\vt)$. The formal definition is as follows.

\begin{definition}\label{DefIndEst}
Let $\wh{\pi}_n$ be an estimator for $\pi(\vt_0)$ calculated from the data $\mathcal{Y}^n$, let $\wh{\pi}^{\text{S}}_{sn}(\vt)$ be an estimator for $\pi(\vt)$ calculated from the pseudo--observations $\mathcal{Y}_S^{sn}(\vt)=(Y_{h}^S(\vt), \ldots, Y_\vt^S(snh))$ and let $\Omega\in\R^{N(\Theta)\times N(\Theta)}$ be a symmetric positive definite weighting matrix. The function $ \LL_{\text{Ind}} : \Theta \to \left[0, \infty\right)$ is defined as
\begin{equation*}\label{chap8:DefLLInd}
\LL_{\text{Ind}} ( \vt, \mathcal{Y}^n ) :=  [\widehat{\pi}_n - \wh{\pi}^{\text{S}}_{sn}(\vt) ]^T \Omega [\widehat{\pi}_n - \wh{\pi}^{\text{S}}_{sn}(\vt) ].
\end{equation*}
Then, the indirect estimator for $\vt_0$ is
\begin{equation*}\label{chap8:DefCheckVT}
\wh{\vt}_n^{\text{Ind}} = \argmin_{\vt \in \Theta} \LL_{\text{Ind}} ( \vt, \mathcal{Y}^n ).
\end{equation*}
\end{definition}

We are able to present general conditions under which this indirect estimator is consistent and asymptotically normally distributed.

%The indirect estimator has the following asymptotic properties:

% Moreover, it will inherit the robustness properties from the GM-estimator used in obtaining $\widehat{\pi}^n$, thus giving us a robust, consistent estimator of $\vt_0$ whose asymptotic distribution is known.
\begin{theorem}\label{chap8:IndEstThm}  $\mbox{}$
\begin{itemize}
\item[(a)] Suppose that the following assumptions are satisfied:
 \renewcommand{\theenumi}{(C.\arabic{enumi})}
         \renewcommand{\labelenumi}{\theenumi}
\begin{enumerate}
\item \label{chap8:UnifConv1}
$\wh \pi_n \stoch \pi(\vt_0)$ as $n\to\infty$.
\item \label{chap8:UnifConv}
$\sup_{\vt \in \Theta} \| \wh{\pi}_n^{\text{S}}(\vt) - \pi(\vt) \| \stoch 0
$  as $n\to\infty$.
\end{enumerate}
% Let $\wh{\pi}_n^{\text{S}}$ be either the least squares estimator as defined in \autoref{def:pseudoLS} or the QMLE as defined in \autoref{def:pseudoMLE} and assume $r \geq 2p-1$.
Define the map
\begin{equation}
\begin{split}\label{mathscrind}
\mathscr{Q}_{\text{Ind}}: \Theta \to \left[0, \infty\right)  \quad \text{ as } \quad
\vt \mapsto [\pi(\vt)-\pi(\vt_0)]^T \Omega [\pi(\vt)-\pi(\vt_0)].
\end{split}
\end{equation}
Then
% If $(\wt Y_n)_{n \in \Z} = (Y(nh))_{n \in \Z}$, i. e. no outliers are present, then it holds that
\begin{equation*}\label{chap8:StrongCons}
    \sup_{\vt \in \Theta} | \LL_{\text{Ind}}(\vt, \mathcal{Y}^n) - \mathscr{Q}_{\text{Ind}}(\vt)|\stoch 0 \quad \mbox{ and }\quad \wh{\vt}_n^{\text{Ind}} \stoch  \vt_0.
\end{equation*}
If we replace in \ref{chap8:UnifConv1} and \ref{chap8:UnifConv} convergence in probability by almost sure convergence then we can replace in the statement
convergence in probability by almost sure convergence as well.
\item[(b)] Assume additionally to \ref{chap8:UnifConv1} and \ref{chap8:UnifConv}:
\begin{enumerate}
\setcounter{enumi}{2}
%\item For any $n \in \N$ the map $\vt \mapsto \wh \pi_n^{\text{S}}(\vt)$ is continuously differentiable.
\item \label{ThmIndEstAssumption1}
$
\sqrt{n} ( \wh{\pi}_n^{\text{S}}(\vt) - \pi(\vt)) \stackrel{\DD}{\longrightarrow} \NN(0, \Xi_{\text{S}}(\vt))$ as $ n \to \infty$
for any  $\vt \in \Theta$.
\item \label{ThmIndEstAssumption2}
$
\sqrt{n} (\wh \pi_n - \pi(\vt_0) ) \stackrel{\DD}{\longrightarrow} \NN( 0, \Xi_{\text{D}}(\vt_0))$ as $ n \to \infty$.
\item \label{ThmIndEstAssumption3} For any sequence $(\overline \vt_n)_{n \in \N}$ with $\overline \vt_n \stoch \vt_0$ as $n\to\infty$ the asymptotic behaviors
\begin{eqnarray*}
\nabla_\vt \wh \pi_n^{\text{S}}(\overline \vt_n)  &\stoch & \nabla_\vt \pi(\vt_0),\\
\nabla_\vt^2 \wh \pi_n^{\text{S}}(\overline \vt_n) &=&O_P(1),
\end{eqnarray*}
hold as $n\to\infty$ and $\nabla_\vt \pi(\vt_0)$ has full column rank $N(\Theta)$.
\end{enumerate}
Then, as $n\to\infty$,
\begin{equation*}\label{chap8:AsNormInd}
\sqrt{n} (\wh{\vt}_n^{\text{Ind}} - \vt_0) \stackrel{\DD}{\longrightarrow} \NN(0, \Xi_{\text{Ind}}(\vt_0)),
\end{equation*}
where
$$\Xi_{\text{Ind}}(\vt_0) = \JJ_{\text{Ind}}( \vt_0)^{-1} \II_{\text{Ind}}( \vt_0)\JJ_{\text{Ind}}( \vt_0)^{-1}$$
with
\begin{eqnarray*}
\JJ_{\text{Ind}}( \vt_0) &=& [\nabla_\vt \pi(\vt_0)]^T \Omega [\nabla_\vt \pi(\vt_0)] \quad \text{ and }\quad\\
\II_{\text{Ind}}(\vt_0) &=& [\nabla_\vt \pi(\vt_0)]^T \Omega \left[ \Xi_{\text{D}}(\vt_0) + \frac1s \Xi_{S}(\vt_0) \right] \Omega [\nabla_\vt \pi(\vt_0)].
\end{eqnarray*}
\end{itemize}
\end{theorem}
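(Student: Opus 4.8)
The plan is to treat $\wh\vt_n^{\text{Ind}}$ as a standard minimum-distance estimator, so that consistency follows from uniform convergence of the criterion together with a well-separated minimum of its limit. Writing $R_n(\vt):=(\wh\pi_n-\pi(\vt_0))-(\wh\pi^{\text{S}}_{sn}(\vt)-\pi(\vt))$, assumptions \ref{chap8:UnifConv1} and \ref{chap8:UnifConv} give $\sup_{\vt\in\Theta}\|R_n(\vt)\|\stoch 0$, and by construction $\wh\pi_n-\wh\pi^{\text{S}}_{sn}(\vt)=-(\pi(\vt)-\pi(\vt_0))+R_n(\vt)$. Expanding the quadratic form yields
\[
\LL_{\text{Ind}}(\vt,\YY^n)=\mathscr{Q}_{\text{Ind}}(\vt)-2(\pi(\vt)-\pi(\vt_0))^T\Omega R_n(\vt)+R_n(\vt)^T\Omega R_n(\vt).
\]
Since $\vt\mapsto\pi(\vt)$ is continuous on the compact set $\Theta$ (\autoref{defbindfct}) and $\Omega$ is fixed, the last two terms vanish uniformly, giving $\sup_{\vt\in\Theta}|\LL_{\text{Ind}}(\vt,\YY^n)-\mathscr{Q}_{\text{Ind}}(\vt)|\stoch 0$. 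For identification, $\mathscr{Q}_{\text{Ind}}(\vt)\ge 0$ with equality iff $\pi(\vt)=\pi(\vt_0)$ (as $\Omega\succ 0$), iff $\vt=\vt_0$ by injectivity of $\pi$ (\autoref{defbindfct}); continuity of $\mathscr{Q}_{\text{Ind}}$ and compactness then give the \emph{well-separated} minimum $\inf_{\|\vt-\vt_0\|\ge\varepsilon}\mathscr{Q}_{\text{Ind}}(\vt)>0$ for every $\varepsilon>0$, whence $\wh\vt_n^{\text{Ind}}\stoch\vt_0$. Replacing $\stoch$ by a.s.\ convergence throughout leaves the argument unchanged.

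\textbf{Part (b).} Since $\vt_0$ lies in the interior of $\Theta$ (\ref{as_D8}) and $\wh\vt_n^{\text{Ind}}\stoch\vt_0$, with probability tending to one $\wh\vt_n^{\text{Ind}}$ solves the first-order condition $\nabla_\vt\LL_{\text{Ind}}(\wh\vt_n^{\text{Ind}})=0$, where $\nabla_\vt\LL_{\text{Ind}}(\vt)=-2[\nabla_\vt\wh\pi^{\text{S}}_{sn}(\vt)]^T\Omega[\wh\pi_n-\wh\pi^{\text{S}}_{sn}(\vt)]$. I would then apply a (component-wise) mean value expansion of this score about $\vt_0$,
\[
0=\nabla_\vt\LL_{\text{Ind}}(\vt_0)+\nabla^2_\vt\LL_{\text{Ind}}(\ov\vt_n)\,(\wh\vt_n^{\text{Ind}}-\vt_0),\qquad \ov\vt_n\stoch\vt_0,
\]
where the Hessian has the form $\nabla^2_\vt\LL_{\text{Ind}}(\vt)=2[\nabla_\vt\wh\pi^{\text{S}}_{sn}(\vt)]^T\Omega[\nabla_\vt\wh\pi^{\text{S}}_{sn}(\vt)]+R_n^{(2)}(\vt)$, with $R_n^{(2)}(\vt)$ linear in both $[\wh\pi_n-\wh\pi^{\text{S}}_{sn}(\vt)]$ and $\nabla^2_\vt\wh\pi^{\text{S}}_{sn}(\vt)$ (a Kronecker contraction in the paper's vec-convention).

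Evaluating at $\ov\vt_n$, the first limit in \ref{ThmIndEstAssumption3} gives $2[\nabla_\vt\wh\pi^{\text{S}}_{sn}(\ov\vt_n)]^T\Omega[\nabla_\vt\wh\pi^{\text{S}}_{sn}(\ov\vt_n)]\stoch 2\JJ_{\text{Ind}}(\vt_0)$, while in $R_n^{(2)}(\ov\vt_n)$ the factor $\wh\pi_n-\wh\pi^{\text{S}}_{sn}(\ov\vt_n)\stoch 0$ (by \ref{chap8:UnifConv1}, \ref{chap8:UnifConv}) multiplies $\nabla^2_\vt\wh\pi^{\text{S}}_{sn}(\ov\vt_n)=O_P(1)$ (second limit in \ref{ThmIndEstAssumption3}), so $R_n^{(2)}(\ov\vt_n)\stoch 0$; hence $\nabla^2_\vt\LL_{\text{Ind}}(\ov\vt_n)\stoch 2\JJ_{\text{Ind}}(\vt_0)$, which is invertible because $\Omega\succ0$ and $\nabla_\vt\pi(\vt_0)$ has full column rank. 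For the score, $\sqrt n\,\nabla_\vt\LL_{\text{Ind}}(\vt_0)=-2[\nabla_\vt\wh\pi^{\text{S}}_{sn}(\vt_0)]^T\Omega\,\sqrt n[\wh\pi_n-\wh\pi^{\text{S}}_{sn}(\vt_0)]$, and I would split $\sqrt n[\wh\pi_n-\wh\pi^{\text{S}}_{sn}(\vt_0)]=\sqrt n(\wh\pi_n-\pi(\vt_0))-\sqrt n(\wh\pi^{\text{S}}_{sn}(\vt_0)-\pi(\vt_0))$. The first term is asymptotically $\NN(0,\Xi_{\text{D}}(\vt_0))$ by \ref{ThmIndEstAssumption2}; since $\wh\pi^{\text{S}}_{sn}(\vt_0)$ is built from $sn$ simulated values, \ref{ThmIndEstAssumption1} yields $\sqrt{sn}(\wh\pi^{\text{S}}_{sn}(\vt_0)-\pi(\vt_0))\stackrel{\DD}{\longrightarrow}\NN(0,\Xi_{\text{S}}(\vt_0))$, i.e.\ the second term is asymptotically $\NN(0,\tfrac1s\Xi_{\text{S}}(\vt_0))$. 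Because the simulated Lévy path $(L^{\text{S}}_t)$ is independent of the data, the two terms are asymptotically independent, so $\sqrt n[\wh\pi_n-\wh\pi^{\text{S}}_{sn}(\vt_0)]\stackrel{\DD}{\longrightarrow}\NN(0,\Xi_{\text{D}}(\vt_0)+\tfrac1s\Xi_{\text{S}}(\vt_0))$. With $[\nabla_\vt\wh\pi^{\text{S}}_{sn}(\vt_0)]^T\Omega\stoch[\nabla_\vt\pi(\vt_0)]^T\Omega$, Slutsky's theorem and the expansion give
\[
\sqrt n(\wh\vt_n^{\text{Ind}}-\vt_0)=-[\nabla^2_\vt\LL_{\text{Ind}}(\ov\vt_n)]^{-1}\sqrt n\,\nabla_\vt\LL_{\text{Ind}}(\vt_0)\stackrel{\DD}{\longrightarrow}\JJ_{\text{Ind}}(\vt_0)^{-1}[\nabla_\vt\pi(\vt_0)]^T\Omega\,\NN(0,\Xi_{\text{D}}(\vt_0)+\tfrac1s\Xi_{\text{S}}(\vt_0)),
\]
which is $\NN(0,\Xi_{\text{Ind}}(\vt_0))$ with the stated sandwich form.

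\textbf{Main obstacle.} The routine parts are the algebra; the delicate points are threefold. First, the mean value expansion is applied to a vector-valued map, for which no single intermediate point exists, so I would expand component-wise (or via an integral remainder) and lean on \ref{ThmIndEstAssumption3} to handle all intermediate points $\ov\vt_n\stoch\vt_0$ at once. Second, showing that the residual-weighted Hessian term $R_n^{(2)}$ is negligible is exactly what the $O_P(1)$ bound on $\nabla^2_\vt\wh\pi^{\text{S}}_{sn}$ is for. Third, and most importantly, the additive covariance $\Xi_{\text{D}}+\tfrac1s\Xi_{\text{S}}$ requires both the independence of the observed and simulated samples and the careful conversion of the $\sqrt{sn}$-rate of the simulation estimator into the $\sqrt n$-rate, which is the origin of the factor $1/s$.
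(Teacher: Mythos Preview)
Your proposal is correct and follows essentially the same approach as the paper. In part (a) your residual decomposition $\wh\pi_n-\wh\pi^{\text{S}}_{sn}(\vt)=-(\pi(\vt)-\pi(\vt_0))+R_n(\vt)$ is a slightly more compact repackaging of the paper's four-term splitting of the quadratic form, but the analytic content is identical; in part (b) both you and the paper linearize the first-order condition $\nabla_\vt\LL_{\text{Ind}}(\wh\vt_n^{\text{Ind}})=0$ around $\vt_0$, use \ref{ThmIndEstAssumption3} to control the Hessian, and obtain the $\tfrac1s$ factor from independence of data and simulation together with the $sn$-sample rate of $\wh\pi^{\text{S}}_{sn}(\vt_0)$.
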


\citet{gourierouxmonfort2} develop for a dynamic model as well the consistency and the asymptotic normality of the indirect estimator but under different assumptions
mainly based on $\LL_{\text{Ind}} ( \vt, \mathcal{Y}^n )$ (see as well \citet{smith}). These results are again summarized in \citet{gourierouxmonfort}.
In the context of indirect estimation of ARMA models, \citet[p.22]{delunagenton} mention  the asymptotic normality of their indirect estimator
but without stating any regularity conditions and only referring to \citet[Proposition 4.2]{gourierouxmonfort}.

\begin{remark}\label{IndEstThm:Remark} \strut
\begin{itemize}
\item[(a)] The asymptotic covariance matrix can  be written as
$$\Xi_{\text{Ind}}(\vt_0) = \HH( \vt_0)  \left( \Xi_{\text{D}}(\vt_0) + \frac1s \Xi_{S}(\vt_0) \right) \HH(\vt_0)^T,$$
where
%\begin{equation*}\label{defHH}
$\HH(\vt_0) = [ \nabla_\vt \pi(\vt_0)^T \Omega \nabla_\vt \pi(\vt_0)]^{-1} [\nabla_\vt \pi(\vt_0)]^T  \Omega.$
%\end{equation*}
This is the analog  form of \linebreak \citet[Eq. (4.4)]{delunagenton}.
\item[(b)] Note that the asymptotic results hold for any $r \geq 2p-1$. But increasing the auxiliary AR order does not necessarily yield better results.
 On the other hand, increasing $s$ increases the efficiency. For $s \to \infty$ we receive $\Xi_{\text{Ind}}(\vt_0) \to \HH( \vt_0)  \Xi_{\text{D}}(\vt_0) \HH(\vt_0)^T$.
 The best efficiency is received for $\Omega = [\Xi_{\text{D}}(\vt_0)]^{-1}$ in which case
$\Xi_{\text{Ind}}(\vt_0)\stackrel{s\to\infty}{\to} \left[ \nabla_\vt \pi(\vt_0)^T  \Xi_{\text{D}}(\vt_0)^{-1} \nabla_\vt \pi(\vt_0)\right]^{-1}.$
\end{itemize}
\end{remark}

\begin{comment}
\begin{remark}
The map $\pi(\vt)$ has a representation via the autocovariance function $\gamma_\vt$ of $(Y_t(\vt))_{t\in\R}$ (see \eqref{pi}
in the proof of \Cref{auxiliaryprop}) and due to
\eqref{ACF} an explicit representation via $\vartheta$ exists. Thus, it is not really necessary to estimate $\pi(\vartheta)$ in the additional simulation step
because it can be calculated explicitly. However, \textcolor{red}{ katrin}
\end{remark}
\end{comment}

\begin{remark} \label{Remark 2.4}
A fundamental assumption for \Cref{auxiliaryprop} is  \ref{as_D3} resulting in the existence of stationary CARMA processes. In particular, in the case of integrated CARMA
processes $(Y_t(\vt))_{t\in\R}$, where $A_\vt$  has eigenvalue $0$, the result of  \Cref{auxiliaryprop} does not hold in general.
For this reason the indirect estimation approach of this paper can not be extended to integrated CARMA processes which are non-stationary.
Even for integrated CARMA processes it is well known that estimators for  the parameter determining the integration have a $n$
convergence instead of a $\sqrt{n}$ convergence (cf. \cite{Chambers:McCrorie:2007,fasenscholz,ChambersMcCrorieThornton}).
\end{remark}

\begin{remark} \label{Remark 2.5}
The discretely observed stationary CARMA$(p,q(\vt))$ process $(Y_{mh}(\vt))_{m \in \Z}$ admits a representation as a stationary ARMA$(p,p-1)$ process with weak white noise  of the form
\begin{equation}\label{chap8:ARMAP}
    \phi(B) Y_{mh}(\vt) = \theta(B) \epsilon_{m}(\vt),
\end{equation}
where  $\phi(z) = \prod_{i=1}^{p} (1 - \e^{h \lambda_i}z)$ (the $\lambda_i$ being the eigenvalues of $A_\vt$), $\theta(z)$ is a monic, Schur--stable polynomial
and $(\varepsilon_m(\vt))_{m\in\Z}$  is a weak white noise (see \citet[Lemma 2.1]{brockwelllindner}), i.e.  $(Y_{mh}(\vt))_{m \in \Z}$ is a weak ARMA$(p,p-1)$ process. Such an exact discrete-time ARMA representation for multivariate CARMA processes was generalized in \cite[Theorem 1]{Thornton:Chambers:2017} to possible
non-stationary multivariate CARMA processes. Thus, it is as well possible to do an indirect estimation procedure by estimating the parameters of the discrete-time ARMA$(p,p-1)$
representation, e.g., using maximum-likelihood, instead of estimating the parameters of the auxiliary AR$(r)$ model. Then the map $\pi$ is replaced by the
map $\pi_1$ which maps the parameters of the CARMA process to the coefficients of the weak ARMA($p, p-1$) representation of its sampled version  \eqref{chap8:ARMAP}.
Using $\pi_1(\vt)$ instead of $\pi(\vt)$ in \Cref{chap8:IndEstThm},
\Cref{chap8:IndEstThm} can be adapted under the same assumptions giving asymptotic normality of the indirect estimator based on the discrete-time ARMA representation
of the CARMA process. In particular,
it is as well possible to derive an estimation procedure for non-stationary CARMA processes. However, until now there does not exist robust estimators for the parameters of
weak ARMA processes such that this approach does not give robust estimators for the parameters of the stationary CARMA process, which is the topic of this paper.
\end{remark}

\section{Estimating the auxiliary AR$(r)$ parameters of a CARMA process with outliers}\label{sect:GM}

In order to apply the indirect estimator to a discretely sampled stationary CARMA process we need strongly consistent and asymptotically normally distributed estimators for the parameters
of the auxiliary  AR$(r)$ representation. In this section we will study  generalized M- (GM-) estimators.
The GM-estimator will be applied to a stationary CARMA process afflicted by outliers because we want to study some robustness properties of our estimator as well.
Outliers can be thought as typical observations that do not arise because of the model structure but due to some external influence, e.g., measurement errors. Therefore, a whole sample of observations which contains outliers does not come from the true model anymore but it is still close to it as long as the total number of outliers is not overwhelmingly large.

\begin{definition}\label{definition outliers}
Let $g: [0,1] \to [0,1]$ be a function that satisfies $g(\gamma) - \gamma = o(\gamma)$ for $\gamma \to 0$. Let $(V_m)_{m \in \Z}$ be a stochastic process taking only the values $0$ and $1$ with
\begin{equation*}\label{chap8:gammaProb}
\P( V_m = 1) = g(\gamma)
\end{equation*}
 and let $(Z_m)_{m \in \Z}$ be a real-valued stochastic process.
 The {\em disturbed process} $(Y_{mh}^\gamma(\vt))_{m \in \Z}$ is defined as
\begin{equation}\label{obs:conta}
  Y_{mh}^{\gamma}(\vt) = (1- V_m) Y_{mh}(\vt) + V_m Z_m.
\end{equation}
\end{definition}
The disturbed process  $(Y_{mh}^\gamma(\vt))_{m \in \Z}$ is in general not a sampled CARMA process anymore.
\begin{remark}\label{OutRemark} $\mbox{}$
\begin{itemize}
\item[(a)] The interpretation of this model is that at each point $m \in \Z$ an outlier is observed with probability $g(\gamma)$ while the true value $Y_{mh}(\vt)$ is observed with probability $1-g(\gamma)$. The model has the advantage that one can obtain both additive and replacement outliers by choosing the processes $(Z_m)_{m \in \Z}$ and $(V_m)_{m \in \Z}$ adequately. Specifically, to model {\em replacement outliers}, one assumes that $(Z_m)_{m \in \Z}$, $(V_m)_{m \in \Z}$ and $(Y_{mh}(\vt))_{m \in \Z}$ are jointly independent. Then, if the realization of $V_m$ is equal to $1$, the value $Y_{mh}(\vt)$ will be replaced by the realization of $Z_m$ justifying the use of the name replacement outliers.
    On the other hand, modeling {\em additive outliers} can be achieved by taking $Z_m = Y_{mh}(\vt) + W_m$ for some process $(W_m)_{m \in \Z}$ and assuming that $(Y_{mh}(\vt))_{m \in \Z}$ is independent from $(V_m)_{m \in \Z}$. Then we have $Y_{mh}^\gamma(\vt) = Y_{mh}(\vt) + V_m W_m$
such that the realization of $W_m$ is added to the realization of $Y_{mh}(\vt)$ if $V_m$ is $1$.
\item[(b)] Another advantage of this general outlier model is that one can easily model the temporal structure of outliers. On the one hand,  if $(V_m)_{m \in \Z}$ is chosen as an i.i.d. sequence with $\P(V_m = 1)=\gamma$, then outliers typically appear {\em isolated}, i.e., between two outliers there is usually a period of time where no outliers are present. On the other hand, one can also model {\em patchy outliers} by letting $(B_m)_{m \in \Z}$ be an i.i.d. process of Bernoulli random variables with success probability $\epsilon$ and setting $V_m = \max( B_{m-l}, \ldots, B_m)$ for a fixed $l \in \N$. Then as $\epsilon \to 0$,
$$\P( V_m = 1 ) = 1 - (1-\epsilon)^l = l \epsilon + o(\epsilon),$$
 which results in $\gamma = l \epsilon$. For $\epsilon$ sufficiently small, outliers then appear in a  block of size $l$.
\end{itemize}
\end{remark}

Recall the following notion:

\begin{definition}
A  stationary stochastic process $Y=(Y_t)_{t\in I}$ with $I=\R$ or $I=\Z$ is
called strongly (or $\alpha$-) mixing if \label{strong:mixing}
\begin{eqnarray*}
    \alpha_l:=\sup\left\{|\P(A\cap B)-\P(A)\P(B)|:\, A\in\mathcal{F}_{-\infty}^0,\,B\in\mathcal{F}_l^{\infty}\right\}\stackrel{l\to\infty}{\to}0
\end{eqnarray*}
where $\mathcal{F}_{-\infty}^0=\sigma(Y_t:\,t\leq 0)$ and $\mathcal{F}_{l}^\infty=\sigma(Y_t:\,t\geq l)$. If $\alpha_l\leq C\alpha^l$
for some constants $C>0$ and $0<\alpha<1$ we call $Y=(Y_t)_{t\in I}$
exponentially strongly  mixing.
\end{definition}

%However, we need some further assumptions:

\setcounter{assumptionletter}{3}
\begin{assumptionletter}\strut
         \label{as_G}
         \renewcommand{\theenumi}{(D.\arabic{enumi})}
         \renewcommand{\labelenumi}{\theenumi}
         \begin{enumerate}
\item \label{as_G1}
The processes
% $Y$,
$(V_m)_{m \in \Z}$ and $(Z_m)_{m \in \Z}$ are strictly stationary with
$\E | V_1 | < \infty$ and $\E | Z_1| < \infty$.
\item \label{as_G2} Either we have the replacement model where the processes $(Y_{mh}(\vt))_{m\in \Z}$, $(V_m)_{m \in \Z}$ and $(Z_m)_{m \in \Z}$ are jointly independent, and
$(V_m)_{m \in \Z}$ and $(Z_m)_{m \in \Z}$ are exponentially strongly mixing, i.e., \linebreak
$\alpha_V(m) \leq C \rho^m$ and $\alpha_Z(m) \leq C \rho^m$ for some $C > 0$, $\rho \in (0,1)$ and every $m \in \N$. Or we have the additive model with $Z_m=Y_{mh}(\vt)+W_m$ where
the processes $(Y_{mh}(\vt))_{m\in \Z}$, $(V_m)_{m \in \Z}$ and $(W_m)_{m \in \Z}$ are jointly independent, and
$(V_m)_{m \in \Z}$ and $(W_m)_{m \in \Z}$ are exponentially strongly mixing.
\item \label{as_G4} For all $a \in \R, \pi \in \R^{r}$ with $|a| + \| \pi \| > 0$:
$$\P ( aY_{(r+1)h}^\gamma(\vt) + \pi_1Y_{rh}^\gamma(\vt) + \ldots + \pi_rY_h^\gamma(\vt) = 0) = 0.$$
\end{enumerate}
\end{assumptionletter}
We largely follow the ideas of \citet{bustos} for the GM-estimation of AR$(r)$ parameters, however our model and our assumptions are slightly different.
\autoref{as_G} corresponds to \citet[Assumption (M2),(M4),(M5)]{bustos}.  The main difference is that the sampled stationary CARMA process $(Y_{mh})_{m \in \Z}$ is in  \citet{bustos}
 an infinite-order moving average process whose noise is  $\Phi$--mixing which is in general not satisfied for a sampled stationary CARMA process. However, we already know from \citet[Proposition~3.34]{MarquardtStelzer2007} that a stationary CARMA process is exponentially strongly mixing which is weaker than $\Phi$-mixing. Therefore, we assume that
   $(V_m)_{m \in \Z}$, $(Z_m)_{m \in \Z}$ and $(W_m)_{m \in \Z}$  are exponentially strongly mixing instead of $\Phi$-mixing as in \cite{bustos}.

  In the following we define GM-estimators. Let two functions $\phi: \R^{r} \times \R \to \R$ and $\chi: \R \to \R$ be given.
   Moreover, assume that we have observations $\mathcal{Y}^{n,\gamma}(\vt)=(Y_h^\gamma(\vt), Y_{2h}^\gamma(\vt), \ldots, Y_{nh}^\gamma(\vt))$ from the disturbed process  in \eqref{obs:conta}. The parameter
 $$\pi^{\text{GM}}(\vt^\gamma) = (\pi_{1}^{\text{GM}}(\vt^\gamma), \ldots, \pi_{r}^{\text{GM}}(\vt^\gamma), \sigma^{\text{GM}}(\vt^\gamma))$$
  is defined as the solution of the equations
  \begin{subequations} \label{pseudo}
\begin{eqnarray}
\E \left[ \phi \left( \begin{pmatrix}Y_h^\gamma(\vt) \\ \vdots \\ Y_{rh}^\gamma(\vt) \end{pmatrix},  \frac{Y_{(r+1)h}^\gamma(\vt) - \pi_{1} Y_{rh}^\gamma(\vt) -  \ldots - \pi_{r}Y_h^\gamma(\vt)}{\sigma} \right) \begin{pmatrix}Y_h^\gamma(\vt) \\ \vdots \\ Y_{rh}^\gamma(\vt) \end{pmatrix} \right] &=& 0,  \label{pseudo:pi} \\
\E \left[ \chi \left( \left( \frac{Y_{(r+1)h}^\gamma(\vt) - \pi_{1} Y_{rh}^\gamma(\vt) -  \ldots - \pi_{r}Y_h^\gamma(\vt)}{\sigma} \right)^2 \right) \right] &= &0 \label{pseudo:sigma}
\end{eqnarray}
\end{subequations}
for $(\pi_1,\ldots,\pi_r,\sigma)\in\R^r\times\left(0,\infty\right)$.
 The idea is again that these are the parameters of the auxiliary AR representation of $(Y_{mh}^\gamma(\vt))_{m\in\Z}$. Note that $\pi^{\text{GM}}(\vt^\gamma)$  depends on the processes $(V_m)_{m \in \Z}$ and $(Z_m)_{m \in \Z}$ as well. We choose not to indicate this in the notation to make the exposition more readable. For the uncontaminated
 process  $(Y_{mh}(\vt))_{m\in\Z}$ we also write $\pi^{\text{GM}}(\vt)$ instead of $\pi^{\text{GM}}(\vt^0)$.
Now, the GM-estimator $\widehat{\pi}_n^{\text{GM}}(\vt^\gamma)=(\widehat{\pi}_{n,1}^{\text{GM}}(\vt^\gamma),\ldots,\widehat{\pi}_{n,r}^{\text{GM}}(\vt^\gamma),\widehat{\sigma}_n^{\text{GM}}(\vt^\gamma))$ based on $\phi$ and $\chi$ is defined to satisfy
\begin{subequations} \label{pseudopseudo}
\begin{eqnarray}
\frac1{n-r} \sum\limits_{k=1}^{n-r} \phi  {\scriptstyle\left( \begin{pmatrix} Y_{kh}^\gamma(\vt) \\ \vdots \\ Y_{(k+r-1)h}^\gamma(\vt) \end{pmatrix}, \frac{Y_{(k+r)h}^\gamma(\vt) - \widehat{\pi}_{n,1}^{\text{GM}}(\vt^\gamma) Y_{(k+r-1)h}^\gamma(\vt) - \ldots - \widehat{\pi}_{n,r}^{\text{GM}}(\vt^\gamma)Y_{kh}^\gamma(\vt)}{ \widehat{\sigma}_n^{\text{GM}}(\vt^\gamma) } \right) \begin{pmatrix} Y_{kh}^\gamma(\vt) \\ \vdots \\ Y_{(k+r-1)h}^\gamma(\vt) \end{pmatrix}} &=& 0, \quad\quad\label{Phieq} \\
\frac1{n-r} \sum\limits_{k=1}^{n-r} \chi \left( \left( \frac{Y_{(k+r)h}^\gamma(\vt^\gamma) - \widehat{\pi}_{n,1}^{\text{GM}}(\vt^\gamma) Y_{(k+r-1)h}^\gamma(\vt) - \ldots - \widehat{\pi}_{n,r}^{\text{GM}}(\vt^\gamma)Y_{kh}^\gamma(\vt)}{ \widehat{\sigma}_n^{\text{GM}}(\vt^\gamma) } \right)^2 \right) &=& 0. \quad\quad\label{Chieq}
\end{eqnarray}
\end{subequations}
 Throughout the paper we assume that there exists
a solution of \eqref{pseudopseudo} although this is not always the case in practice.
\begin{example}\label{GMexample} \strut
\begin{itemize}
\item[(a)] There are two main classes of GM-estimators, the so--called Mallows estimators and the Hampel--Krasker--Welsch estimators. More information on them can be found in \cite{bustos}; \linebreak \citet{denbymartin}; \citet{martin,martinyohai}. In the literature, this kind of estimators sometimes appear under the name BIF (for bounded influence) estimators. The class of {\em Mallows estimators} are defined as %was originally proposed in \citet{mallows} for the regression setup for non--dependent data and later generalized to the time series setting. They are defined by choosing
$\phi(y,u) = w(y) \psi(u),$
where $w$ is a strictly positive weight function and $\psi$ is a suitably chosen robustifying function. The {\em Hampel--Krasker--Welsch estimators} are of the form
$$\phi(y,u) = \frac{\psi(w(y) u)}{w(y)},$$
where $w$ is  a weight function and $\psi$  is again a suitably chosen bounded function.
%In the original study of this estimator, \citet{hampel1978} and \citet{kraskerwelsch} used $\psi = \psi_k$ (as defined in part (b) below) and $w(y) = \frac{1}{\| y \|}$ as a special case.
\item[(b)]  Typical choices for $\psi$ are the {\em Huber $\psi_k$--functions} (cf. \citet[Eq. (2.28)]{maronna}). Those functions are defined as
$\psi_k(u) = \text{sign}(u) \min\{ |u|, k\}$
for a constant $k > 0$. A possibility for $w$ is, e.g.,  $w(y) = \psi_k(|y|)/|y|$ for a Huber function $\psi_k$.
Another choice for $\psi$ is the so-called {\em Tukey bisquare} (or biweight) function which is given by
$$\psi(u) = u \left(1-\frac{u^2}{k^2} \right)^2 \Ind_{\{ |u| \leq k\}},$$
where $k$ is  a tuning constant.
\item[(c)] For the function $\chi$, a possibility is
$\chi(x^2) = \psi^2(x) - \E_{Z}[ \psi^2(Z) ]$
with the same $\psi$ function as in the definition of $\phi$. The random variable $Z$ is suitably distributed.
\end{itemize}
\end{example}
In order to develop an asymptotic theory and to obtain a robust estimator it is necessary to impose assumptions on $\phi$ and $\chi$ which we will do next analogous to \citet[(E1) - (E6)]{bustos}:
\begin{assumptionletter}\strut
         \label{as_H}
         \renewcommand{\theenumi}{(E.\arabic{enumi})}
         \renewcommand{\labelenumi}{\theenumi}
Suppose $\phi: \R^{r} \times \R \to \R$ and $\chi: \R \to \R$ satisfy the following assumptions:
                  \begin{enumerate}
\item \label{as_H1} For each $y \in \R^r$, the map $u \mapsto \phi(y,u)$ is odd, uniformly continuous and  $\phi(y, u) \geq 0$ for $u \geq 0$.
\item \label{as_H2} $(y,u) \mapsto \phi(y,u)y$ is bounded and there exists a $c > 0$ such that
$$| \phi(y,u)y - \phi(z,u)z| \leq c \|y-z \| \quad  \text{ for all } u \in \R.$$
\item \label{as_H3} The map $u \mapsto \frac{\phi(y,u)}{u}$ is non-increasing for $y \in \R^r$  and there exists a $u_0 \in \R$ such that $\frac{\phi(y, u_0)}{u_0} > 0$.
\item \label{as_H4} $\phi(y,u)$ is differentiable with respect to $u$ and the map $u \mapsto \frac{\partial \phi(y,u)}{\partial u}$ is continuous, while $(y,u) \mapsto \frac{\partial \phi(y,u)}{\partial u} y$ is bounded.
\item \label{as_H5}
${\displaystyle \E \left[ \sup_{u \in \R} \left\{ u \left( \frac{\partial}{\partial u} \phi \left( \begin{pmatrix}Y_h^\gamma(\vt) \\ \vdots \\ Y_{rh}^\gamma(\vt) \end{pmatrix}, u \right) \right)  \left\| \begin{pmatrix}Y_h^\gamma(\vt) \\ \vdots \\ Y_{rh}^\gamma(\vt) \end{pmatrix} \right\| \right\} \right] < \infty.}$
\item \label{as_H6} $\chi$ is bounded and increasing on $\{ x: -a \leq \chi(x) < b\}$ where $b = \sup_{x \in \R} \chi(x)$ and $a = -\chi(0)$. Furthermore, $\chi$ is differentiable and  $x \mapsto x \chi'(x^2)$ is continuous and bounded. Lastly, $\chi(u^2_0) > 0$.
         \end{enumerate}
\end{assumptionletter}

In the remaining of this section  we always assume that \autoref{as_G} and \ref{as_H} are satisfied.

\begin{remark}\label{existGMremark}
As pointed out in \citet[p. 497]{bustos} one can deduce from \citet[Theorem 2.1]{maronnayohai} that there exists a solution  $\pi^{\text{GM}}(\vt^\gamma) \in \R^r \times (0,\infty)$
of equation  \eqref{pseudo} if \linebreak \autoref{as_H} holds. Moreover, there exists
 a compact set $K \subset \R^r \times (0, \infty)$ with $\pi^{\text{GM}}(\vt^\gamma) \in K$ and for any $\pi \in K^c$ equation \eqref{pseudo}  does not hold (see \citet[p. 500]{bustos}).
\end{remark}

 In general it is not easy to verify that $\pi^{\text{GM}}(\vt^\gamma)$ is unique.
 Additionally, one would like to have that $\pi^{\text{GM}}(\vt^0) =\pi^{\text{GM}}(\vt)= \pi(\vt)$ are the parameters of the auxiliary AR$(r)$ model in the case that the GM-estimator is applied to realizations of an uncontaminated sampled stationary CARMA process $(Y_{mh}(\vt))_{m \in \Z}$. The following proposition gives a sufficient condition.
\begin{proposition}\label{uniquenessprop}
 Suppose that  $U_{r+1}(\vt)$ as defined in  equation \eqref{chap8:AuxRep} satisfies
 \begin{eqnarray} \label{sym}
(U_{r+1}(\vt), Y_{rh}(\vt), \ldots, Y_{h}(\vt)) \stackrel{\DD}{=} (-U_{ r+1}(\vt), Y_{rh}(\vt), \ldots, Y_{h}(\vt)).
\end{eqnarray}
Assume further that the function $u \mapsto \phi(y,u)$ is nondecreasing and strictly increasing for $|u| \leq u_0$, where $u_0$ satisfies Assumptions \ref{as_H3} and \ref{as_H6}, and the function $\chi$ is chosen in such a way that
\begin{eqnarray} \label{4.5}
    \E \left[ \chi \left( \left( \frac{U_{1}(\vt)}{\sigma(\vt)} \right) ^2 \right) \right] = 0.
\end{eqnarray}
 Finally, assume that $\gamma=0$ so that $(Y_{mh}^\gamma(\vt))_{m \in \Z} = (Y_{mh}(\vt))_{m \in \Z}$.
Then the auxiliary parameter $\pi(\vt)$ as defined in \autoref{auxiliaryprop} is the unique solution of \eqref{pseudo},
i.e., $\pi^{\text{GM}}(\vt^0)=\pi(\vt)$.
\end{proposition}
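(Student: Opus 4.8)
The plan is to prove the statement in three steps: first verify that $(\pi(\vt),\sigma(\vt))$ solves the system \eqref{pseudo}, then show that any solution must have the same autoregressive part $\pi(\vt)$, and finally that the scale is thereby forced to equal $\sigma(\vt)$. Throughout I write $\mathbf{Y}:=(Y_h(\vt),\ldots,Y_{rh}(\vt))^T$ for the regressor vector (recall $\gamma=0$), and I use that at the true auxiliary parameter the residual appearing in \eqref{pseudo} is exactly $U_{r+1}(\vt)/\sigma(\vt)$, since $U_{r+1}(\vt)=Y_{(r+1)h}(\vt)-\pi_1(\vt)Y_{rh}(\vt)-\ldots-\pi_r(\vt)Y_h(\vt)$ by \eqref{chap8:AuxRep}.

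\textbf{Step 1 (existence).} For the vector equation \eqref{pseudo:pi} I would exploit that $u\mapsto\phi(y,u)$ is odd by \ref{as_H1}, so $(\mathbf{y},u)\mapsto\phi(\mathbf{y},u/\sigma)\mathbf{y}$ is odd in $u$ for every $\sigma>0$. Combined with the symmetry hypothesis \eqref{sym} (under which $(U_{r+1}(\vt),\mathbf{Y})$ and $(-U_{r+1}(\vt),\mathbf{Y})$ have the same law) this yields
\[
\E\!\left[\phi\!\left(\mathbf{Y},\tfrac{U_{r+1}(\vt)}{\sigma}\right)\mathbf{Y}\right]=\E\!\left[\phi\!\left(\mathbf{Y},\tfrac{-U_{r+1}(\vt)}{\sigma}\right)\mathbf{Y}\right]=-\E\!\left[\phi\!\left(\mathbf{Y},\tfrac{U_{r+1}(\vt)}{\sigma}\right)\mathbf{Y}\right],
\]
so the expectation vanishes, and this holds for every $\sigma>0$ (a fact I reuse in Step~2). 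In particular it holds at $\sigma(\vt)$, so \eqref{pseudo:pi} is satisfied. Equation \eqref{pseudo:sigma} at $(\pi(\vt),\sigma(\vt))$ reads $\E[\chi((U_{r+1}(\vt)/\sigma(\vt))^2)]=0$, which equals $\E[\chi((U_1(\vt)/\sigma(\vt))^2)]=0$ by stationarity and is exactly hypothesis \eqref{4.5}.

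\textbf{Step 2 (uniqueness of the AR part).} Let $(\pi,\sigma)$ be any solution and set $W:=\sum_{k=1}^r(\pi_k(\vt)-\pi_k)Y_{(r+1-k)h}(\vt)$, so that the residual equals $(U_{r+1}(\vt)+W)/\sigma$. Taking the inner product of \eqref{pseudo:pi} with $(\pi_r(\vt)-\pi_r,\ldots,\pi_1(\vt)-\pi_1)^T$ gives $\E[\phi(\mathbf{Y},(U_{r+1}(\vt)+W)/\sigma)\,W]=0$, while the identity from Step~1 (valid for this same $\sigma$) gives $\E[\phi(\mathbf{Y},U_{r+1}(\vt)/\sigma)\,W]=0$. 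Subtracting,
\[
\E\!\left[\Big(\phi\!\left(\mathbf{Y},\tfrac{U_{r+1}(\vt)+W}{\sigma}\right)-\phi\!\left(\mathbf{Y},\tfrac{U_{r+1}(\vt)}{\sigma}\right)\Big)\,W\right]=0.
\]
Since $u\mapsto\phi(\mathbf{Y},u)$ is nondecreasing and $\sigma>0$, the integrand is pointwise nonnegative, hence it vanishes almost surely. If $\pi\neq\pi(\vt)$ then \ref{as_G4} (with $a=0$) forces $W\neq0$ a.s., so the a.s.\ vanishing gives $\phi(\mathbf{Y},(U_{r+1}(\vt)+W)/\sigma)=\phi(\mathbf{Y},U_{r+1}(\vt)/\sigma)$ a.s.; by the assumed strict monotonicity of $\phi(\mathbf{Y},\cdot)$ on $[-u_0,u_0]$ this is impossible on the event where the two arguments straddle $(-u_0,u_0)$. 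I expect the main obstacle to be showing this event has positive probability, i.e.\ that the joint law of $(U_{r+1}(\vt),W)$ charges a neighbourhood of the strict-monotonicity region; this is where the support/non-degeneracy properties of the Lévy-driven CARMA law must be invoked. The cleanest route is conditional: \eqref{sym} makes the conditional law of $U_{r+1}(\vt)$ given $\mathbf{Y}$ symmetric, and for a variable symmetric about $0$ the map $t\mapsto\E[\phi(\mathbf{y},(U_{r+1}(\vt)+t)/\sigma)\mid\mathbf{Y}=\mathbf{y}]$ is odd, nondecreasing, and strictly sign-matching with $t$ as soon as the conditional law charges a neighbourhood of $0$; this makes $W\cdot\E[\phi(\mathbf{Y},(U_{r+1}(\vt)+W)/\sigma)\mid\mathbf{Y}]>0$ on $\{W\neq0\}$, contradicting the a.s.\ vanishing. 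Hence $\pi=\pi(\vt)$.

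\textbf{Step 3 (uniqueness of the scale).} With $\pi=\pi(\vt)$ the residual reduces to $U_{r+1}(\vt)/\sigma\stackrel{\DD}{=}U_1(\vt)/\sigma$, so \eqref{pseudo:sigma} becomes $\E[\chi((U_1(\vt)/\sigma)^2)]=0$. Because $\sigma\mapsto(U_1(\vt)/\sigma)^2$ is strictly decreasing on $\{U_1(\vt)\neq0\}$ and $\chi$ is increasing on its non-saturated range by \ref{as_H6}, the map $\sigma\mapsto\E[\chi((U_1(\vt)/\sigma)^2)]$ is strictly monotone near any root, so the root $\sigma(\vt)$ supplied by \eqref{4.5} is the unique one. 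Combining the three steps gives $\pi^{\text{GM}}(\vt^0)=\pi(\vt)$.
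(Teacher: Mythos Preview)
Your Step~1 is correct and coincides with the paper's argument. Your Step~3 is also fine in spirit; the paper handles it by citing a Maronna--Yohai lemma to get uniqueness of $\sigma$ for each fixed $(\pi_1,\dots,\pi_r)$.

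The issue is in Step~2. Your inner-product trick and the monotonicity argument are correct, and in fact make explicit what the paper only sketches: from
\[
\E\!\left[\Big(\phi\!\big(\mathbf{Y},\tfrac{U+W}{\sigma}\big)-\phi\!\big(\mathbf{Y},\tfrac{U}{\sigma}\big)\Big)W\right]=0
\]
and pointwise nonnegativity you correctly get $\phi(\mathbf{Y},(U+W)/\sigma)=\phi(\mathbf{Y},U/\sigma)$ a.s.\ on $\{W\neq0\}$, and strict monotonicity on $(-u_0,u_0)$ forces both arguments to lie outside $(-u_0,u_0)$. But then you try to close the argument using the $\phi$-equation alone, which leads you to the unverified support hypothesis ``the conditional law of $U_{r+1}(\vt)$ given $\mathbf{Y}$ charges a neighbourhood of $0$''. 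Nothing in Assumptions~A, D or E guarantees this, so the proof as written has a genuine gap.

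The paper avoids this entirely by using the $\chi$-equation \eqref{pseudo:sigma} for the contradiction. From the conclusion above you already have $|(U+W)/\sigma|\ge u_0$ a.s. Since $(\pi,\sigma)$ is assumed to solve \eqref{pseudo:sigma}, and since $\chi$ is increasing on its non-saturated range with $\chi(u_0^2)>0$ by \ref{as_H6}, one gets
\[
0=\E\!\left[\chi\!\Big(\big(\tfrac{U+W}{\sigma}\big)^2\Big)\right]\ \ge\ \chi(u_0^2)\ >\ 0,
\]
a contradiction. This is the missing idea: the strict monotonicity of $\phi$ on $(-u_0,u_0)$ is used only to push the alternative residual outside $[-u_0,u_0]$, and then the scale equation \eqref{pseudo:sigma} (not a support property of $U$) delivers the contradiction. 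With this replacement your Step~2 becomes complete and no non-degeneracy of the CARMA law is needed.
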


\begin{remark} \strut
\begin{enumerate}[(a)]
\item Assumption \eqref{sym} holds if the distribution of $U_{r+1}(\vt)$ is symmetric and $U_{r+1}(\vt)$ is independent of  $(Y_{rh}(\vt), \ldots, Y_{h}(\vt))$. This again is satisfied if  $(L_t)_{t\in\R}$  is a Brownian motion. %Then $(Y_{h}(\vt), \ldots, Y_{(r+1)h}(\vt))$ is multivariate normally distributed. For normally distributed random variables it is well know that they are independent if and only if they are uncorrelated, hence the property that $U_{r+1}(\vt)$ is uncorrelated with $Y_{h}(\vt), \ldots, Y_{(r+1)h}(\vt)$ by construction implies the independence in this case. The symmetry of a normal distribution is obvious.
\item The monotonicity assumption on $\phi$ is valid, e.g., for both the Mallows and Hampel--Krasker--Welsch estimators  when the function $\psi$ is chosen as a Huber $\psi_k$--function  with $u_0 = k$.
\item The assumption on $\chi$ is fulfilled, e.g., if $\chi$ is chosen as in \Cref{GMexample}(c) with \linebreak $Z \stackrel{\DD}{=} U_{1}(\vt)/\sqrt{\Var(U_{1}(\vt))}$. In the case that the driving L\'{e}vy process is a Brownian motion this means that $Z \sim \NN(0,1)$.
\end{enumerate}
\end{remark}

\begin{theorem}\label{chap8:ThmCons}
Suppose  that there exists a unique solution $\pi^{\text{GM}}(\vt^\gamma)$   of \eqref{pseudo}. Then \linebreak $\widehat{\pi}_n^{\text{GM}}(\vt^\gamma) \stackrel{n\to\infty}{\to} \pi^{\text{GM}}(\vt^\gamma)$  $\Pas$
\end{theorem}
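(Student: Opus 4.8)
The GM-estimator $\widehat{\pi}_n^{\text{GM}}(\vt^\gamma)$ is defined implicitly as a zero of the sample estimating function obtained by stacking the left-hand sides of \eqref{Phieq} and \eqref{Chieq}. Write $\zeta=(\pi_1,\ldots,\pi_r,\sigma)\in\R^r\times(0,\infty)$, let $\Lambda_n(\zeta)$ denote this sample function and $\Lambda(\zeta)$ the population function given by the left-hand sides of \eqref{pseudo:pi} and \eqref{pseudo:sigma}. By hypothesis $\pi^{\text{GM}}(\vt^\gamma)$ is the unique zero of $\Lambda$, and by \Cref{existGMremark} it lies in a compact set $K\subset\R^r\times(0,\infty)$ outside of which \eqref{pseudo} has no solution. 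The plan is the classical two-ingredient argument for consistency of estimators defined through estimating equations: (i) a uniform law of large numbers $\sup_{\zeta\in K}\|\Lambda_n(\zeta)-\Lambda(\zeta)\|\to 0$ $\Pas$, and (ii) well-separatedness of the unique zero, which together force $\widehat{\pi}_n^{\text{GM}}(\vt^\gamma)\to\pi^{\text{GM}}(\vt^\gamma)$ $\Pas$.

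First I would establish ergodicity. By \citet[Proposition~3.34]{MarquardtStelzer2007} the sampled process $(Y_{mh}(\vt))_{m\in\Z}$ is exponentially strongly mixing, and under \ref{as_G1} and \ref{as_G2} the contaminating processes are jointly independent of it and exponentially strongly mixing; hence the disturbed process $(Y_{mh}^\gamma(\vt))_{m\in\Z}$ from \eqref{obs:conta} is strictly stationary and strongly mixing, in particular ergodic. Each coordinate of the summands in $\Lambda_n$ is a fixed measurable function of a finite block $(Y_{kh}^\gamma(\vt),\ldots,Y_{(k+r)h}^\gamma(\vt))$, so Birkhoff's ergodic theorem gives $\Lambda_n(\zeta)\to\Lambda(\zeta)$ $\Pas$ for each fixed $\zeta$, once integrability is checked. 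Integrability holds because $(y,u)\mapsto\phi(y,u)y$ is bounded by \ref{as_H2} and $\chi$ is bounded by \ref{as_H6}; by dominated convergence $\Lambda$ is moreover continuous on $\R^r\times(0,\infty)$.

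To upgrade this pointwise convergence to the uniform statement (i) I would use a stochastic equicontinuity plus finite-covering argument on $K$. Since $K$ is compact in $\R^r\times(0,\infty)$, the scale $\sigma$ is bounded away from $0$ on $K$, so the residual argument $u=(Y_{(k+r)h}^\gamma(\vt)-\pi_1 Y_{(k+r-1)h}^\gamma(\vt)-\cdots-\pi_r Y_{kh}^\gamma(\vt))/\sigma$ depends continuously on $\zeta$ with a modulus of continuity uniform over $K$. Concretely, \ref{as_H1} (uniform continuity of $u\mapsto\phi(y,u)$), \ref{as_H2} (boundedness and Lipschitz continuity of $\phi(y,u)y$ in $y$), \ref{as_H4} (boundedness of $\tfrac{\partial}{\partial u}\phi(y,u)y$) and the smoothness of $\chi$ in \ref{as_H6} furnish an integrable envelope and an equicontinuity bound for the family $\{\zeta\mapsto(\text{summand})\}$ whose expectation vanishes as the mesh shrinks. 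Covering $K$ by finitely many small balls, applying the pointwise ergodic convergence at their centers, and controlling the oscillation within each ball then yields $\sup_{\zeta\in K}\|\Lambda_n(\zeta)-\Lambda(\zeta)\|\to 0$ $\Pas$; this is the strongly-mixing analog of the argument in \citet{bustos}, and is where the bulk of the work lies.

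Finally I would combine (i) and (ii). As $\Lambda$ is continuous on the compact set $K$ and $\pi^{\text{GM}}(\vt^\gamma)$ is its unique zero there, the zero is well separated: for every neighborhood $G$ of $\pi^{\text{GM}}(\vt^\gamma)$ one has $\inf_{\zeta\in K\setminus G}\|\Lambda(\zeta)\|>0$. By the sample analog of \Cref{existGMremark} (the same compactness argument that produced $K$), $\widehat{\pi}_n^{\text{GM}}(\vt^\gamma)\in K$ for all large $n$ almost surely and satisfies $\Lambda_n(\widehat{\pi}_n^{\text{GM}}(\vt^\gamma))=0$. Hence $\|\Lambda(\widehat{\pi}_n^{\text{GM}}(\vt^\gamma))\|=\|\Lambda(\widehat{\pi}_n^{\text{GM}}(\vt^\gamma))-\Lambda_n(\widehat{\pi}_n^{\text{GM}}(\vt^\gamma))\|\le\sup_{\zeta\in K}\|\Lambda_n(\zeta)-\Lambda(\zeta)\|\to 0$ $\Pas$, and well-separatedness gives $\widehat{\pi}_n^{\text{GM}}(\vt^\gamma)\to\pi^{\text{GM}}(\vt^\gamma)$ $\Pas$. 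The main obstacle is step (i): proving the uniform law of large numbers for a merely strongly mixing (rather than $\Phi$-mixing) sequence while simultaneously controlling the nuisance scale $\sigma$ appearing in the denominator.
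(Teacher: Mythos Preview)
Your proposal is correct and follows exactly the route the paper takes: the paper omits the proof entirely and simply states that it ``goes in the same vein as the proof of \citet[Theorem 2.1]{bustos}'', which is precisely the uniform-law-of-large-numbers plus well-separatedness argument you sketch, with the adaptation from $\Phi$-mixing to exponential strong mixing that you correctly flag as the main technical obstacle. The one step you pass over lightly---that the sample solution $\widehat{\pi}_n^{\text{GM}}(\vt^\gamma)$ eventually lies in the compact set $K$ almost surely---is not automatic from \Cref{existGMremark} (which concerns the \emph{population} equation) and in Bustos's argument is handled via the monotonicity and growth properties in \ref{as_H3} and \ref{as_H6}, but this is a standard ingredient rather than a gap in your strategy.
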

The proof goes in the same vein as the proof of \citet[Theorem 2.1]{bustos} and is therefore omitted.

Next, we would like to deduce the asymptotic normality of the GM-estimator.
Let the set $K$ be given as in \Cref{existGMremark} and for $\pi = (\pi_1, \ldots, \pi_r, \sigma) \in K$ define
\begin{eqnarray} \label{QGM}
\mathscr{Q}_{\text{GM}}(\pi,\vt^\gamma) = \begin{pmatrix} \E \left[ \phi \left( \begin{pmatrix}Y_h^\gamma(\vt) \\ \vdots \\ Y_{rh}^\gamma(\vt) \end{pmatrix}, \frac{Y_{(r+1)h}^\gamma(\vt) - \pi_1 Y_{rh}^\gamma(\vt) - \ldots - \pi_{r} Y_{h}^\gamma(\vt)}{ \sigma } \right) \begin{pmatrix}Y_h^\gamma(\vt) \\ \vdots \\ Y_{rh}^\gamma(\vt) \end{pmatrix} \right] \\ \E \left[ \chi \left( \left( \frac{Y_{(r+1)h}^\gamma(\vt) - \pi_1 Y_{rh}^\gamma(\vt) - \ldots - \pi_{r} Y_{h}^\gamma(\vt)}{ \sigma } \right)^2 \right) \right] \end{pmatrix}.
\end{eqnarray}
For the proof of the asymptotic normality of the GM estimator we use a Taylor expansion of \linebreak $\mathscr{Q}_{\text{GM}}(\pi,\vt^\gamma)$ at $\pi^{\text{GM}}(\vt^\gamma)$.
With the knowledge of the asymptotic behavior  $\mathscr{Q}_{\text{GM}} ( \widehat{\pi}_n^{\text{GM}}(\vt^\gamma),\vt^\gamma)$ and \linebreak $\nabla_\pi \mathscr{Q}_{\text{GM}} ( \widehat{\pi}_n^{\text{GM}}(\vt^\gamma),\vt^\gamma)$ it is then straightforward to derive the asymptotic behavior
of the GM-estimator $\widehat{\pi}_n^{\text{GM}}(\vt^\gamma)$.

We need the following auxiliary result which is the analog of \citet[Lemma 3.1]{bustos} under our different model assumptions.
\begin{lemma}\label{AsNorm}
Define the map
$\Psi:  \R^{r+1}\times \R^r \times (0,\infty) \to \R^{r+1}$ as
\begin{align*}
&\Psi( y, \pi)
= \begin{pmatrix} \phi \left( \begin{pmatrix} y_1 \\ \vdots \\ y_r \end{pmatrix}, \frac{y_{r+1} - \pi_{1} y_r - \ldots - \pi_{r} y_1}{ \sigma } \right) \begin{pmatrix} y_1 \\ \vdots \\ y_r \end{pmatrix} \\ \chi \left( \left( \frac{y_{r+1} - \pi_{1} y_r - \ldots - \pi_{r} y_1}{ \sigma } \right)^2 \right) \end{pmatrix}.
\end{align*}
Furthermore, define the stochastic process $\Psi(\vt^\gamma)=(\Psi_k(\vt^\gamma))_{k \in \N}$ as
$\Psi_k(\vt^\gamma) \linebreak = \Psi(Y_{kh}^\gamma(\vt), \ldots, Y_{(k+r+1)h}^\gamma(\vt), \pi^{\text{GM}}(\vt^\gamma))$.
Then
$$\frac{1}{\sqrt{n-r}} \sum_{k=1}^{n-r} \Psi_k(\vt^\gamma) \stackrel{\DD}{\to} \NN( 0, \II_{\text{GM}}(\vt^\gamma)),$$
where the $(i,j)$-th component of $\II_{\text{GM}}(\vt^\gamma)$ is
\begin{equation}\label{IIGM}
[\II_{\text{GM}}(\vt^\gamma)]_{ij} = \E \left[ \Psi_{1,i}(\vt^\gamma) \Psi_{1,j}(\vt^\gamma) \right] + 2 \sum_{k=1}^{\infty} \E \left[ \Psi_{1,i}(\vt^\gamma)
\Psi_{1+k,j}(\vt^\gamma) \right]
\end{equation}
and $\Psi_{k,i}(\vt^\gamma)$ denotes the $i$--th component of $\Psi_k(\vt^\gamma)$, $i=1, \ldots, r+1$. Especially, each $[\II_{\text{GM}}(\vt^\gamma)]_{ij}$ is
finite for $i, j \in \{ 1, \ldots, r+1 \}$.
\end{lemma}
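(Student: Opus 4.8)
The plan is to view $\frac{1}{\sqrt{n-r}}\sum_{k=1}^{n-r}\Psi_k(\vt^\gamma)$ as the normalized partial sum of a uniformly bounded, strictly stationary, centered, exponentially strongly mixing $\R^{r+1}$-valued sequence, and then to invoke the classical central limit theorem for strongly mixing sequences via the Cram\'er--Wold device, in the same vein as \citet[Lemma 3.1]{bustos}.

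First I would establish the structural properties of the disturbed process. By \citet[Proposition 3.34]{MarquardtStelzer2007} the sampled stationary CARMA process $(Y_{mh}(\vt))_{m\in\Z}$ is exponentially strongly mixing, and by \ref{as_G2} the contaminating sequences are exponentially strongly mixing and jointly independent of it. Since $Y_{mh}^\gamma(\vt)=(1-V_m)Y_{mh}(\vt)+V_mZ_m$ (respectively $Y_{mh}^\gamma(\vt)=Y_{mh}(\vt)+V_mW_m$ in the additive case) is a fixed measurable function of the jointly independent mixing inputs, the disturbed process $(Y_{mh}^\gamma(\vt))_{m\in\Z}$ is again strictly stationary and exponentially strongly mixing, its mixing coefficients being controlled by the sum of those of the components. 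As each $\Psi_k(\vt^\gamma)$ is a fixed measurable function of the finite block $Y_{kh}^\gamma(\vt),\ldots,Y_{(k+r+1)h}^\gamma(\vt)$, the sequence $(\Psi_k(\vt^\gamma))_{k\in\N}$ inherits strict stationarity and exponential strong mixing, the mixing coefficients being merely shifted by the fixed block length and hence still decaying geometrically.

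Next I would verify the remaining hypotheses and conclude. Centering is immediate: since $\pi^{\text{GM}}(\vt^\gamma)$ solves \eqref{pseudo}, $\E[\Psi_1(\vt^\gamma)]=\mathscr{Q}_{\text{GM}}(\pi^{\text{GM}}(\vt^\gamma),\vt^\gamma)=0$, so by stationarity every summand is centered. Boundedness follows from \autoref{as_H}: the first $r$ coordinates of $\Psi_k(\vt^\gamma)$ are bounded because $(y,u)\mapsto\phi(y,u)y$ is bounded by \ref{as_H2}, and the last coordinate is bounded because $\chi$ is bounded by \ref{as_H6}; hence $(\Psi_k(\vt^\gamma))_k$ has finite moments of every order. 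I would then apply the Cram\'er--Wold device: for arbitrary $\lambda\in\R^{r+1}$ the scalar process $(\lambda^T\Psi_k(\vt^\gamma))_k$ is bounded, centered, strictly stationary and exponentially strongly mixing, so a classical central limit theorem for strongly mixing sequences (e.g.\ Ibragimov's theorem) applies, its mixing-and-moment condition $\sum_l\alpha_l^{\delta/(2+\delta)}<\infty$ holding trivially by the geometric decay of $\alpha_l$ together with boundedness. This gives $\frac{1}{\sqrt{n-r}}\sum_{k=1}^{n-r}\lambda^T\Psi_k(\vt^\gamma)\stackrel{\DD}{\to}\NN(0,\lambda^T\II_{\text{GM}}(\vt^\gamma)\lambda)$, and hence the asserted joint convergence. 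The long-run-variance form \eqref{IIGM} and the finiteness of its entries follow from Davydov's covariance inequality, which for the uniformly bounded summands yields $|\E[\Psi_{1,i}(\vt^\gamma)\Psi_{1+k,j}(\vt^\gamma)]|\le C\alpha_k\le C\rho^k$, so that each series in \eqref{IIGM} converges absolutely.

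The step I expect to be the main obstacle is the transfer of exponential strong mixing -- first from the CARMA process and the contamination sequences to the disturbed process $(Y_{mh}^\gamma(\vt))_{m\in\Z}$, and then to the nonlinear functional $(\Psi_k(\vt^\gamma))_k$ -- since this is precisely where the replacement/additive independence structure of \ref{as_G2} must be exploited and where the mixing coefficients of a function of several independent mixing inputs have to be controlled. Once that is in place, the remaining arguments are routine applications of standard limit theory for mixing sequences.
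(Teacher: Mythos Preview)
Your proposal is correct and follows essentially the same approach as the paper: Cram\'er--Wold reduction, verification of the moment condition via the boundedness assumptions \ref{as_H2} and \ref{as_H6}, transfer of exponential strong mixing to $(Y_{mh}^\gamma(\vt))_{m\in\Z}$ using the independence structure in \ref{as_G2} (the paper invokes \citet[Theorem 6.6(II)]{bradley} for exactly the step you flag as the main obstacle), inheritance of mixing by the finite-block functional $(\Psi_k(\vt^\gamma))_k$, and then Ibragimov's CLT. The only minor difference is that you deduce finiteness of the entries of $\II_{\text{GM}}(\vt^\gamma)$ from Davydov's covariance inequality, whereas the paper extracts it directly from the conclusion of Ibragimov's theorem that $x^T\II_{\text{GM}}(\vt^\gamma)x<\infty$ for every $x$; both arguments work.
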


First, we derive the asymptotic behavior of the gradient $ \nabla_\pi \mathscr{Q}_{\text{GM}}(\pi_n,\vt^\gamma)$.

\begin{lemma} \label{Lemma 4.8}
Let $\mathscr{Q}_{\text{GM}}(\pi,\vt^\gamma)$ be defined as in \eqref{QGM}. Then
the gradient $\nabla_\pi \mathscr{Q}_{\text{GM}}(\pi,\vt^\gamma)$ exists. Moreover, for any sequence $(\pi_n)_{n\in\N}$
with $\pi_n \stackrel{\P}{\to}\pi^{\text{GM}}(\vt^\gamma)$ as $n\to\infty$
we have as $n\to\infty$,
\begin{eqnarray*}
    \nabla_\pi \mathscr{Q}_{\text{GM}}(\pi_n,\vt^\gamma) \stackrel{\P}{\to} \nabla_\pi \mathscr{Q}_{\text{GM}}(\pi^{\text{GM}}(\vt^\gamma),\vt^\gamma).
\end{eqnarray*}
\end{lemma}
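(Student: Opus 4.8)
The plan is to exploit the fact that, for fixed $\vt^\gamma$, the map $\mathscr{Q}_{\text{GM}}(\cdot,\vt^\gamma)$ in \eqref{QGM} is \emph{deterministic}, since all randomness is integrated out. Consequently the asserted convergence in probability is nothing but the continuous mapping theorem applied to the deterministic function $g(\pi):=\nabla_\pi\mathscr{Q}_{\text{GM}}(\pi,\vt^\gamma)$ along the random sequence $\pi_n\stoch\pi^{\text{GM}}(\vt^\gamma)$. Thus the real content of the lemma is twofold: first, that the gradient $g$ exists on a neighborhood of $\pi^{\text{GM}}(\vt^\gamma)$, and second, that $g$ is continuous there. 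Once both are established, and noting that $\pi^{\text{GM}}(\vt^\gamma)$ lies in the interior $\R^r\times(0,\infty)$ of the domain (indeed in the compact set $K$ of \Cref{existGMremark}, whose $\sigma$-coordinate is strictly positive), the continuous mapping theorem immediately yields $g(\pi_n)\stoch g(\pi^{\text{GM}}(\vt^\gamma))$.

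To compute $g$, I observe that $\pi=(\pi_1,\dots,\pi_r,\sigma)$ enters \eqref{QGM} only through the normalized residual $u(\pi)=\bigl(Y^\gamma_{(r+1)h}(\vt)-\sum_{k=1}^r\pi_k Y^\gamma_{(r+1-k)h}(\vt)\bigr)/\sigma$, with $\partial u/\partial\pi_j=-Y^\gamma_{(r+1-j)h}(\vt)/\sigma$ and $\partial u/\partial\sigma=-u(\pi)/\sigma$. Writing $\mathbf{Y}=(Y^\gamma_h(\vt),\dots,Y^\gamma_{rh}(\vt))^T$, the chain rule differentiates the first block $\E[\phi(\mathbf{Y},u(\pi))\mathbf{Y}]$ into terms $-\sigma^{-1}\E[\tfrac{\partial\phi}{\partial u}(\mathbf{Y},u(\pi))\,\mathbf{Y}\,Y^\gamma_{(r+1-j)h}(\vt)]$ and $-\sigma^{-1}\E[u(\pi)\,\tfrac{\partial\phi}{\partial u}(\mathbf{Y},u(\pi))\,\mathbf{Y}]$, and the second block $\E[\chi(u(\pi)^2)]$ into $-2\sigma^{-1}\E[u(\pi)\chi'(u(\pi)^2)Y^\gamma_{(r+1-j)h}(\vt)]$ and $-2\sigma^{-1}\E[u(\pi)^2\chi'(u(\pi)^2)]$. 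To legitimize differentiating under the expectation I exhibit integrable dominating functions valid uniformly over a compact neighborhood $\overline{K}\subset\R^r\times(0,\infty)$ of $\pi^{\text{GM}}(\vt^\gamma)$ on which $\sigma\geq\sigma_{\min}>0$: the boundedness of $(y,u)\mapsto\tfrac{\partial\phi}{\partial u}(y,u)y$ in \ref{as_H4} and of $x\mapsto x\chi'(x^2)$ in \ref{as_H6} bound the two $\pi_j$-derivatives by a constant multiple of $|Y^\gamma_{(r+1-j)h}(\vt)|$, which is integrable because $\E|Y^\gamma_{1}(\vt)|<\infty$ (a consequence of the CARMA moment bound \ref{as_D2} together with the contamination moment bounds \ref{as_G1}); the $\sigma$-derivative of the first block is dominated precisely by the random variable in \ref{as_H5}; and the $\sigma$-derivative of the second block is controlled by writing $|u^2\chi'(u^2)|=|u|\,|u\chi'(u^2)|\le C|u|$ and dominating by $2C\sigma_{\min}^{-1}\sup_{\pi\in\overline{K}}|u(\pi)|$, again integrable.

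For continuity of $g$ on $\overline{K}$ I apply dominated convergence a second time. Along any $\pi^{(m)}\to\pi^\ast$ in $\overline{K}$ the residuals satisfy $u(\pi^{(m)})\to u(\pi^\ast)$ pointwise, and the integrands in all four derivative blocks converge pointwise almost surely by the continuity of $u\mapsto\tfrac{\partial\phi}{\partial u}(y,u)$ (\ref{as_H4}) and of $x\mapsto x\chi'(x^2)$ (\ref{as_H6}). The very same dominating functions from the previous step hold uniformly over $\overline{K}$, so the integrals converge and each component of $g$ is continuous at $\pi^\ast$, in particular at $\pi^{\text{GM}}(\vt^\gamma)$. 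Together with the continuous mapping theorem this completes the proof.

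I expect the domination of the $\sigma$-derivatives to be the main obstacle, because there the residual $u(\pi)$ enters without being damped by a bounded factor: controlling $u\,\tfrac{\partial\phi}{\partial u}(\mathbf{Y},u)\mathbf{Y}$ uniformly in $u$ is exactly the purpose of the technical assumption \ref{as_H5}, and controlling $u^2\chi'(u^2)$ forces the factorization into $|u|\cdot|u\chi'(u^2)|$ so that the boundedness of $x\mapsto x\chi'(x^2)$ in \ref{as_H6} can be combined with the integrability of $\sup_{\pi\in\overline{K}}|u(\pi)|$. Keeping $\sigma$ bounded away from $0$ on $\overline{K}$ is essential throughout, which is guaranteed because $\pi^{\text{GM}}(\vt^\gamma)$ has a strictly positive $\sigma$-coordinate and hence admits such a neighborhood.
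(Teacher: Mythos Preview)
Your proof is correct and follows essentially the same approach as the paper: dominate each partial derivative of the integrand uniformly over a compact set containing $\pi^{\text{GM}}(\vt^\gamma)$ (using \ref{as_H4} for the $\pi_j$-derivatives of the $\phi$-block, \ref{as_H5} for the $\sigma$-derivative of the $\phi$-block, and \ref{as_H6} for the $\chi$-block), then apply dominated convergence twice---once to differentiate under the expectation and once to obtain continuity of the gradient---and conclude with the continuous mapping theorem. Your treatment of the $\sigma$-derivative of the $\chi$-block via the factorization $|u^2\chi'(u^2)|\le C|u|$ together with the integrability of $\sup_{\pi\in\overline K}|u(\pi)|$ is more explicit than the paper's, which merely says ``similar arguments, using Assumption \ref{as_H6}''; but the argument is the same in spirit.
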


Next, we deduce the asymptotic normality of  $\mathscr{Q}_{\text{GM}}( \widehat{\pi}_n^{\text{GM}}(\vt^\gamma),\vt^\gamma)$.

\begin{lemma} \label{Lemma 4.7}
Let $\mathscr{Q}_{\text{GM}}(\pi,\vt^\gamma)$ be defined as in \eqref{QGM} and suppose that $\nabla_\pi \mathscr{Q}_{\text{GM}}(\pi,\vt^\gamma)$ is non-singular.
Furthermore, let $\II_{\text{GM}}(\vt^\gamma)$ be given as in \eqref{IIGM} and
 suppose that $\widehat{\pi}_n^{\text{GM}}(\vt^\gamma) \stackrel{\P}{\to} \pi^{GM}(\vt^\gamma)$ as $n\to\infty$.
Then, as $n\to\infty$,
$$\sqrt{n-r} \mathscr{Q}_{\text{GM}} ( \widehat{\pi}_n^{\text{GM}}(\vt^\gamma),\vt^\gamma) \stackrel{\DD}{\longrightarrow} \NN(0, \II_{\text{GM}}(\vt^\gamma)).$$
\end{lemma}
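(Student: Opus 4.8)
The plan is to exploit that the GM-estimator makes the \emph{empirical} analogue of $\mathscr{Q}_{\text{GM}}$ vanish, and to transfer the central limit theorem of \Cref{AsNorm} from the true auxiliary parameter to the estimator by a stochastic equicontinuity argument. Write $W_k := (Y_{kh}^\gamma(\vt), \ldots, Y_{(k+r)h}^\gamma(\vt))$ and define the empirical map
\begin{equation*}
\mathscr{Q}_{\text{GM},n}(\pi, \vt^\gamma) := \frac{1}{n-r} \sum_{k=1}^{n-r} \Psi(W_k, \pi),
\end{equation*}
so that, by \eqref{pseudopseudo}, $\mathscr{Q}_{\text{GM},n}(\widehat{\pi}_n^{\text{GM}}(\vt^\gamma), \vt^\gamma) = 0$, while by stationarity $\E[\Psi(W_1, \pi)] = \mathscr{Q}_{\text{GM}}(\pi, \vt^\gamma)$ and $\mathscr{Q}_{\text{GM}}(\pi^{\text{GM}}(\vt^\gamma), \vt^\gamma) = 0$ by \eqref{pseudo}. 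Introducing the centered and scaled empirical process $G_n(\pi) := \sqrt{n-r}\,[\mathscr{Q}_{\text{GM},n}(\pi, \vt^\gamma) - \mathscr{Q}_{\text{GM}}(\pi, \vt^\gamma)]$, the vanishing of $\mathscr{Q}_{\text{GM},n}$ at the estimator yields the identity
\begin{equation*}
\sqrt{n-r}\,\mathscr{Q}_{\text{GM}}(\widehat{\pi}_n^{\text{GM}}(\vt^\gamma), \vt^\gamma) = - G_n(\widehat{\pi}_n^{\text{GM}}(\vt^\gamma)),
\end{equation*}
so the assertion reduces to $-G_n(\widehat{\pi}_n^{\text{GM}}(\vt^\gamma)) \stackrel{\DD}{\to} \NN(0, \II_{\text{GM}}(\vt^\gamma))$.

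At the true auxiliary parameter, $G_n(\pi^{\text{GM}}(\vt^\gamma)) = \frac{1}{\sqrt{n-r}} \sum_{k=1}^{n-r} \Psi_k(\vt^\gamma)$, which converges in distribution to $\NN(0, \II_{\text{GM}}(\vt^\gamma))$ by \Cref{AsNorm}. It therefore suffices to show $G_n(\widehat{\pi}_n^{\text{GM}}(\vt^\gamma)) - G_n(\pi^{\text{GM}}(\vt^\gamma)) \stoch 0$; then Slutsky's theorem together with the symmetry of the centered normal limit gives the claim. Since $\widehat{\pi}_n^{\text{GM}}(\vt^\gamma) \stoch \pi^{\text{GM}}(\vt^\gamma)$ by assumption, this in turn follows once $(G_n)$ is shown to be stochastically equicontinuous at $\pi^{\text{GM}}(\vt^\gamma)$, i.e.
\begin{equation*}
\lim_{\delta \downarrow 0}\; \limsup_{n \to \infty}\; \P\!\left( \sup_{\|\pi - \pi^{\text{GM}}(\vt^\gamma)\| \leq \delta} \| G_n(\pi) - G_n(\pi^{\text{GM}}(\vt^\gamma)) \| > \eta \right) = 0 \quad \text{for every } \eta > 0.
\end{equation*}

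To verify the equicontinuity I would combine two ingredients. First, the sliding-window process $(W_k)_{k \in \Z}$ is exponentially strongly mixing: the disturbed process is a fixed measurable function of the jointly independent processes entering \eqref{obs:conta}, each of which is exponentially strongly mixing (the stationary CARMA process by \citet[Proposition~3.34]{MarquardtStelzer2007}, and $(V_m)_{m\in\Z}$ together with $(Z_m)_{m\in\Z}$, resp.\ $(W_m)_{m\in\Z}$, by \ref{as_G2}), and a finite block of an exponentially strongly mixing sequence inherits exponential mixing. Second, the indexing class $\{ \Psi(\cdot, \pi) : \pi \in B \}$ over a compact neighbourhood $B$ of $\pi^{\text{GM}}(\vt^\gamma)$ (with $\sigma$ bounded away from $0$) has a \emph{bounded} envelope and a vanishing $L^2$-modulus. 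Indeed, by \ref{as_H2} the map $(y,u) \mapsto \phi(y,u)y$ is bounded and by \ref{as_H6} so is $\chi$, hence $\|\Psi(\cdot,\pi)\| \leq M_0$ uniformly; at the same time, differentiating in $\pi$ and $\sigma$ and using the boundedness of $u \mapsto \frac{\partial}{\partial u}\phi(y,u)y$ from \ref{as_H4} and of $x \mapsto x\chi'(x^2)$ from \ref{as_H6}, one obtains $\|\Psi(y,\pi) - \Psi(y,\pi')\| \leq C\,\|\pi-\pi'\|\,\|y\|$. Combining the two bounds gives $\|\Psi(W_1,\pi)-\Psi(W_1,\pi')\| \leq \min\{2M_0,\, C\|\pi-\pi'\|\,\|W_1\|\}$, whence, using only $\E\|W_1\| < \infty$ (which holds by \ref{as_G1} and \ref{as_D2}),
\begin{equation*}
\E\Big[\sup_{\|\pi-\pi'\| \leq \rho} \|\Psi(W_1,\pi)-\Psi(W_1,\pi')\|^2\Big] \leq C'\rho \xrightarrow[\rho \downarrow 0]{} 0 .
\end{equation*}
This bounded envelope together with the resulting polynomial $L^2$-bracketing numbers makes $\{ \Psi(\cdot,\pi):\pi\in B\}$ a Donsker class, and a functional central limit theorem for uniformly bounded Donsker classes of exponentially strongly mixing stationary sequences (or, equivalently, a chaining bound for the partial sums built from a blocking/maximal inequality) yields $G_n \Rightarrow G$ in $\ell^\infty(B)$ for a Gaussian limit $G$ with almost surely uniformly continuous paths, hence the stochastic equicontinuity above.

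Collecting the pieces, $G_n(\widehat{\pi}_n^{\text{GM}}(\vt^\gamma)) - G_n(\pi^{\text{GM}}(\vt^\gamma)) \stoch 0$, so $G_n(\widehat{\pi}_n^{\text{GM}}(\vt^\gamma))$ shares the weak limit $\NN(0, \II_{\text{GM}}(\vt^\gamma))$ of $G_n(\pi^{\text{GM}}(\vt^\gamma))$ and, by symmetry, so does its negative $\sqrt{n-r}\,\mathscr{Q}_{\text{GM}}(\widehat{\pi}_n^{\text{GM}}(\vt^\gamma), \vt^\gamma)$. I expect the stochastic equicontinuity to be the main obstacle: the naive Lipschitz bound is useless under the $\sqrt{n-r}$ normalization, since $\|W_1\|$ need not be square-integrable in the outlier model and $\frac{1}{\sqrt{n-r}}\sum_k \|W_k\|$ diverges, so it is essential that the bounded-plus-Lipschitz structure of $\Psi$ supplied by \ref{as_H2}, \ref{as_H4} and \ref{as_H6} forces the centered increments to cancel. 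Matching the entropy of the class against the exponential mixing rate in the functional central limit theorem, and justifying the mixing of $(W_k)$ in both the replacement and the additive outlier regime, is where the real work lies.
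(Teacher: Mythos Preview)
Your reduction is exactly the paper's. Writing out your identity $G_n(\widehat\pi_n)-G_n(\pi^{\text{GM}})=-\bigl[\sqrt{n-r}\,\mathscr{Q}_{\text{GM}}(\widehat\pi_n,\vt^\gamma)+\tfrac{1}{\sqrt{n-r}}\sum_k\Psi_k(\vt^\gamma)\bigr]$ shows that your equicontinuity claim ``$G_n(\widehat\pi_n)-G_n(\pi^{\text{GM}})\stoch 0$'' is literally the paper's assertion that the ``first term'' in its decomposition is $o_P(1)$; the second piece is \Cref{AsNorm} in both proofs.

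The only difference is in how that $o_P(1)$ is obtained. The paper does not argue via a functional CLT: it simply cites \cite[Lemma~3.5]{bustos} (adapted to exponential strong mixing in \cite[Lemma~A.5]{Kimmig}), which proves the needed negligibility by direct moment and mixing estimates tailored to the specific structure of $\Psi$. Your proposed route---bounded envelope from \ref{as_H2}/\ref{as_H6}, a Lipschitz-type bound in $\pi$ using \ref{as_H4}/\ref{as_H6}, and a bracketing FCLT for exponentially $\alpha$-mixing sequences---is a legitimate modern alternative and would also work, but you correctly identify that the entropy/mixing trade-off is where the labour sits. Two small cautions if you pursue it: the $\sigma$-derivative of the $\phi$-block is controlled by \ref{as_H5} (an $L^1$ bound on $\sup_u u\,\partial_u\phi(y,u)\|y\|$), not by \ref{as_H4}, so your pointwise Lipschitz bound $C\|\pi-\pi'\|\,\|y\|$ needs that assumption for the scale coordinate; and the FCLT you invoke must allow the envelope to be merely bounded while the Lipschitz constant is only $L^1$, which is exactly what the ``$\min\{2M_0,\,C\|\pi-\pi'\|\,\|y\|\}$'' device buys. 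The paper's citation sidesteps all of this by appealing to the hand-made estimate in Bustos/Kimmig.
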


The following analog version of \citet[Theorem 2.2]{bustos} holds in our setting which gives the asymptotic normality of the GM-estimator.

\begin{theorem}\label{chap8:ThmNorm}
Let $\mathscr{Q}_{\text{GM}}(\pi,\vt^\gamma)$ be defined as in \eqref{QGM} and suppose that $\JJ_{\text{GM}}(\vt^\gamma) :=\nabla_\pi \mathscr{Q}_{\text{GM}}(\pi,\vt^\gamma)$ is non-singular.
Furthermore, let $\II_{\text{GM}}(\vt^\gamma)$ be given as in \eqref{IIGM} and
 suppose that \linebreak $\widehat{\pi}_n^{\text{GM}}(\vt^\gamma) \stackrel{\P}{\to} \pi^{GM}(\vt^\gamma)$ as $n\to\infty$.
Then, as $n\to\infty$,
$$ \sqrt{n-r} ( \widehat{\pi}_n^{\text{GM}}(\vt^\gamma) - \pi^{\text{GM}}(\vt^\gamma)) \stackrel{\DD}{\longrightarrow} \NN(0, \Xi_{\text{GM}}(\vt^\gamma)),$$
where
\begin{equation}\label{XiGM}
\Xi_{\text{GM}}(\vt^\gamma) := [\JJ_{\text{GM}}(\vt^\gamma)]^{-1} \II_{\text{GM}}(\vt^\gamma) [\JJ_{\text{GM}}(\vt^\gamma)]^{-1}.
\end{equation}
\end{theorem}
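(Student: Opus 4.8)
The plan is to run the standard one-step linearization (Z-estimator) argument, treating the three preceding lemmas as the analytic engine and reducing the theorem itself to a Taylor expansion combined with Slutsky's theorem. The starting point is that the GM-estimator $\widehat{\pi}_n^{\text{GM}}(\vt^\gamma)$ solves the empirical estimating equations \eqref{pseudopseudo}, so the empirical counterpart of $\mathscr{Q}_{\text{GM}}(\cdot,\vt^\gamma)$ vanishes at $\widehat{\pi}_n^{\text{GM}}(\vt^\gamma)$, while the population map satisfies $\mathscr{Q}_{\text{GM}}(\pi^{\text{GM}}(\vt^\gamma),\vt^\gamma)=0$ by definition of $\pi^{\text{GM}}(\vt^\gamma)$. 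Subtracting, $\sqrt{n-r}\,\mathscr{Q}_{\text{GM}}(\widehat{\pi}_n^{\text{GM}}(\vt^\gamma),\vt^\gamma)$ is exactly (minus) the centered empirical process evaluated at the estimator, and \Cref{Lemma 4.7} tells us that it converges in distribution to $\NN(0,\II_{\text{GM}}(\vt^\gamma))$. This single invocation absorbs the central limit theorem for the mixing sequence from \Cref{AsNorm} together with the equicontinuity argument that allows $\pi^{\text{GM}}(\vt^\gamma)$ to be replaced by the random $\widehat{\pi}_n^{\text{GM}}(\vt^\gamma)$.

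Next I would linearize the \emph{left-hand} side of this same quantity. Writing $\mathscr{Q}_{\text{GM}}(\widehat{\pi}_n^{\text{GM}}(\vt^\gamma),\vt^\gamma)=\mathscr{Q}_{\text{GM}}(\widehat{\pi}_n^{\text{GM}}(\vt^\gamma),\vt^\gamma)-\mathscr{Q}_{\text{GM}}(\pi^{\text{GM}}(\vt^\gamma),\vt^\gamma)$ and applying the mean value theorem to each of the $r+1$ components of $\mathscr{Q}_{\text{GM}}$ separately, there are intermediate points $\ov\pi_n^{(i)}$, each lying on the segment between $\widehat{\pi}_n^{\text{GM}}(\vt^\gamma)$ and $\pi^{\text{GM}}(\vt^\gamma)$, with
\begin{equation*}
\mathscr{Q}_{\text{GM}}(\widehat{\pi}_n^{\text{GM}}(\vt^\gamma),\vt^\gamma) = M_n\,(\widehat{\pi}_n^{\text{GM}}(\vt^\gamma)-\pi^{\text{GM}}(\vt^\gamma)),
\end{equation*}
where the $i$-th row of the $(r+1)\times(r+1)$ matrix $M_n$ is the $i$-th row of $\nabla_\pi\mathscr{Q}_{\text{GM}}(\ov\pi_n^{(i)},\vt^\gamma)$; here the existence of the gradient is supplied by \Cref{Lemma 4.8}. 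Since the estimator is consistent by hypothesis, $\widehat{\pi}_n^{\text{GM}}(\vt^\gamma)\stoch\pi^{\text{GM}}(\vt^\gamma)$, each intermediate point is squeezed on the segment and hence $\ov\pi_n^{(i)}\stoch\pi^{\text{GM}}(\vt^\gamma)$. Applying \Cref{Lemma 4.8} to each such sequence gives $\nabla_\pi\mathscr{Q}_{\text{GM}}(\ov\pi_n^{(i)},\vt^\gamma)\stoch\JJ_{\text{GM}}(\vt^\gamma):=\nabla_\pi\mathscr{Q}_{\text{GM}}(\pi^{\text{GM}}(\vt^\gamma),\vt^\gamma)$, and in particular the $i$-th rows converge, so that $M_n\stoch\JJ_{\text{GM}}(\vt^\gamma)$.

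Finally I would assemble the two pieces. Because $\JJ_{\text{GM}}(\vt^\gamma)$ is assumed non-singular and $M_n\stoch\JJ_{\text{GM}}(\vt^\gamma)$, the matrix $M_n$ is invertible on an event whose probability tends to one, with $M_n^{-1}\stoch\JJ_{\text{GM}}(\vt^\gamma)^{-1}$ by continuity of matrix inversion. On that event,
\begin{equation*}
\sqrt{n-r}\,(\widehat{\pi}_n^{\text{GM}}(\vt^\gamma)-\pi^{\text{GM}}(\vt^\gamma)) = M_n^{-1}\,\sqrt{n-r}\,\mathscr{Q}_{\text{GM}}(\widehat{\pi}_n^{\text{GM}}(\vt^\gamma),\vt^\gamma),
\end{equation*}
so Slutsky's theorem combined with \Cref{Lemma 4.7} yields convergence in distribution to $\JJ_{\text{GM}}(\vt^\gamma)^{-1}\NN(0,\II_{\text{GM}}(\vt^\gamma))$, a centered Gaussian law with covariance $[\JJ_{\text{GM}}(\vt^\gamma)]^{-1}\II_{\text{GM}}(\vt^\gamma)([\JJ_{\text{GM}}(\vt^\gamma)]^{-1})^T=\Xi_{\text{GM}}(\vt^\gamma)$, as in \eqref{XiGM}.

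Given the lemmas, the genuine analytic difficulties (the CLT for the dependent, contaminated sequence in \Cref{AsNorm} and the stochastic-equicontinuity step feeding \Cref{Lemma 4.7}) are already discharged, so the only delicate point in the present proof is the vector-valued mean value theorem: since $\mathscr{Q}_{\text{GM}}$ is $\R^{r+1}$-valued there is no single intermediate point, and one must argue componentwise and then verify that the assembled matrix $M_n$, built from possibly different intermediate points in different rows, still converges to $\JJ_{\text{GM}}(\vt^\gamma)$. The rowwise consistency of the $\ov\pi_n^{(i)}$ makes this routine, but it is the step that must be handled with care rather than glossed over.
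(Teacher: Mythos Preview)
Your proof is correct and follows essentially the same route as the paper's own argument: expand $\mathscr{Q}_{\text{GM}}$ between $\pi^{\text{GM}}(\vt^\gamma)$ and $\widehat{\pi}_n^{\text{GM}}(\vt^\gamma)$ via the mean value theorem, then combine \Cref{Lemma 4.7} and \Cref{Lemma 4.8} through Slutsky. The paper writes the Taylor expansion in the opposite direction (expanding $\mathscr{Q}_{\text{GM}}(\pi^{\text{GM}}(\vt^\gamma),\vt^\gamma)=0$ around the estimator) and uses a single intermediate point, whereas you are more careful in handling the vector-valued mean value theorem componentwise; this is a genuine refinement, since the paper's single-point notation is the usual convenient abuse.
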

% We now have all buildings block and can turn to the proof of \autoref{chap8:ThmNorm}:

\section{The indirect estimator for the CARMA parameters} \label{sec:indirect estimator CARMA}

\subsection{Asymptotic normality}

In \Cref{sec:indirect estimation} we already introduced the indirect estimator and presented in \Cref{chap8:IndEstThm} sufficient criteria
for the indirect estimator to be consistent and asymptotically normally distributed. In the following we want to show that
these assumptions are satisfied in the setting of discretely sampled CARMA processes when we use as estimator $\widehat\pi_n^S(\vt)$ in the simulation part the least-squares- (LS-) estimator $\widehat \pi_n^{\text{LS}}(\vt)$
and for $\widehat\pi_n$ the GM-estimator $\widehat\pi_n^{\text{GM}}(\vt_0)$.

\begin{definition}\label{def:pseudoLS}
Based on the sample $\mathcal{Y}^{sn}_S(\vt) = (Y_h^S(\vt), \ldots, Y_{snh}^S(\vt))$  the LS-estimator
$\wh{\pi}_{sn}^{\text{LS}}(\vt)  = (\wh{\pi}_{sn, 1}^{\text{LS}}(\vt), \ldots, \wh{\pi}_{sn,r}^{\text{LS}}(\vt), \wh \sigma_{sn}^{\text{LS}}(\vt))$ of $\pi(\vt)$ minimizes
\begin{align}
\LL_{\text{LS}}(\pi, \mathcal{Y}_S^{sn}(\vt) ) &:= \frac{1}{sn-r} \sum_{k=1}^{sn-r} \left( Y_{(k+r)h}^S(\vt) - \pi_1Y_{(k+r-1)h}^S(\vt) - \ldots - \pi_rY_{kh}^S(\vt) \right)^2
  %=: \frac{1}{n-r} \sum_{k=1}^{n-r} U_{k}(\pi,\vt)^2
  \label{chap8:DefPiRing}
\intertext{in $\Pi':=\pi(\Theta) $ and $\wh \sigma_{sn}^{\text{LS}}(\vt)$ is defined as  }
\wh \sigma_{\text{LS},sn}^{2}(\vt) = &\frac{1}{sn-r} \sum_{k=1}^{sn-r} \left( Y_{(k+r)h}^S(\vt) - \wh{\pi}_{sn,1}^{\text{LS}}(\vt)Y_{(k+r-1)h}^S(\vt)
- \ldots - \wh{\pi}_{sn, r}^{\text{LS}}(\vt)Y_{kh}^S(\vt) \right)^2. \notag
\end{align}
\end{definition}

\begin{remark}\label{def:pseudoMLE}
The quasi ML-function for the auxiliary AR$(r)$ parameters of the discretely sampled CARMA process is defined as
\begin{eqnarray*}
\LL_{\text{QMLE}}(\pi, \mathcal{Y}_S^{sn}(\vt))= \frac{1}{sn-r} \sum_{k=1}^{sn-r} \left( \log(\sigma^2)
+  \frac{ (Y_{(k+r)h}^S(\vt) - \pi_1Y_{(k+r-1)h}^S(\vt) - \ldots - \pi_rY_{kh}^S(\vt))^2}{\sigma^2} \right)
\end{eqnarray*}
and the quasi ML-estimator as
$
\wh{\pi}^{\text{QMLE}}_{sn}(\vt) = \argmin_{\pi \in \Pi'} \LL_{\text{QMLE}}(\pi, \mathcal{Y}^{sn}_S(\vt)).
$
It is well known that for the estimation of AR$(r)$ parameters the ML-estimator and the LS-estimator are equivalent (this can be seen
by straightforward calculations taking the derivatives of the ML-function $\LL_{\text{QMLE}}$ which are proportional to the derivatives of $\LL_{\text{LS}}$).
\end{remark}

\begin{theorem} \label{Theorem:indirect estimator}
Let \autoref{as_D}, \ref{Assumption B}, \ref{as_G} and \ref{as_H} hold. Suppose that the unique solution $\pi^{\text{GM}}(\vt_0)$   of \eqref{pseudo} for $(Y_{mh})_{m\in\Z}$ is $\pi(\vt_0)$, that $\nabla_\vt\pi(\vt_0)$ has full column rank $N(\Theta)$ and that $\JJ_{\text{GM}}(\vt_0)$ is non-singular. Further, assume that $\E|L_1^S|^{2N^*}$ for some $N^*\in\N$ with $2N^*>\max(N(\Theta),4+\delta)$.
If $\widehat\pi_n^S(\vt)=\widehat \pi_n^{\text{LS}}(\vt)$ and  $\widehat\pi_n=\widehat\pi_n^{\text{GM}}(\vt_0)$ then the indirect estimator
$\wh{\vt}_n^{\text{Ind}}$ is weakly consistent and
\begin{equation*}
\sqrt{n} (\wh{\vt}_n^{\text{Ind}} - \vt_0) \stackrel{\DD}{\longrightarrow} \NN(0, \Xi_{\text{Ind}}(\vt_0)),
\end{equation*}
where
$$\Xi_{\text{Ind}}(\vt_0) = \JJ_{\text{Ind}}( \vt_0)^{-1} \II_{\text{Ind}}( \vt_0)\JJ_{\text{Ind}}( \vt_0)^{-1}$$
with
\begin{eqnarray*}
\JJ_{\text{Ind}}( \vt_0) &=& [\nabla_\vt \pi(\vt_0)]^T \Omega [\nabla_\vt \pi(\vt_0)] \quad \text{ and }\quad\\
\II_{\text{Ind}}(\vt_0) &=& [\nabla_\vt \pi(\vt_0)]^T \Omega \left[ \Xi_{\text{GM}}(\vt_0) + \frac1s \Xi_{LS}(\vt_0) \right] \Omega [\nabla_\vt \pi(\vt_0)],
\end{eqnarray*}
where the matrix $\Xi_{\text{LS}}(\vt)$ is defined as in \eqref{XiGM} with $\phi(y,u) = u$ and $\chi(x) = x -1$.
\end{theorem}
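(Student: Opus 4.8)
The plan is to verify, for the present choice $\widehat\pi_n=\widehat\pi_n^{\text{GM}}(\vt_0)$ and $\widehat\pi_n^{\text{S}}(\vt)=\widehat\pi_{sn}^{\text{LS}}(\vt)$, the five hypotheses \ref{chap8:UnifConv1}--\ref{ThmIndEstAssumption3} of \autoref{chap8:IndEstThm}, after which the claim follows at once from parts (a) and (b) of that theorem with $\Xi_{\text{D}}(\vt_0)=\Xi_{\text{GM}}(\vt_0)$ and $\Xi_{\text{S}}(\vt_0)=\Xi_{\text{LS}}(\vt_0)$. The conditions \ref{chap8:UnifConv1} and \ref{ThmIndEstAssumption2} concern only the GM-estimator on the real data and are the cheap part: by \autoref{chap8:ThmCons} the GM-estimator is strongly consistent for $\pi^{\text{GM}}(\vt_0)$, which equals $\pi(\vt_0)$ by hypothesis, giving \ref{chap8:UnifConv1}; and by \autoref{chap8:ThmNorm}, using that $\JJ_{\text{GM}}(\vt_0)$ is non-singular, we get $\sqrt{n-r}(\widehat\pi_n^{\text{GM}}(\vt_0)-\pi(\vt_0))\stackrel{\DD}{\to}\NN(0,\Xi_{\text{GM}}(\vt_0))$, and since $\sqrt{n-r}/\sqrt n\to1$ this yields \ref{ThmIndEstAssumption2} with $\Xi_{\text{D}}(\vt_0)=\Xi_{\text{GM}}(\vt_0)$.

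The substance of the proof is the analysis of the LS-estimator as a function of $\vt$. I would first express $\widehat\pi_{sn}^{\text{LS}}(\vt)$ through the empirical Yule--Walker equations: with the empirical autocovariances $\widehat\gamma_{sn}(j,\vt)=\frac{1}{sn-r}\sum_k Y_{(k+j)h}^{S}(\vt)Y_{kh}^{S}(\vt)$ of the simulated path, $\widehat\pi_{sn}^{\text{LS}}(\vt)$ is a smooth (rational) function of the finitely many $\widehat\gamma_{sn}(j,\vt)$, $0\le j\le r$, while $\pi(\vt)$ is the same function of the true autocovariances $\gamma_\vt(jh)$ from \eqref{ACF}, which by \autoref{auxiliaryprop} and \eqref{chap8:DefAuxParam2} solve the population Yule--Walker system whose coefficient matrix is invertible by stationarity. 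Since the simulated CARMA process is stationary and exponentially strongly mixing, a uniform law of large numbers (using compactness of $\Theta$ from \ref{as_D1}, the joint continuity of $\vt\mapsto Y_{mh}^{S}(\vt)$ inherited from \ref{as_D9}, and the bound $\sup_{\vt\in\Theta}\|\e^{A_\vt u}\|\le C\e^{-\rho u}$) gives $\sup_{\vt\in\Theta}|\widehat\gamma_{sn}(j,\vt)-\gamma_\vt(jh)|\stoch0$; composing with the smooth autocovariance-to-parameter map yields \ref{chap8:UnifConv}. For \ref{ThmIndEstAssumption1}, at fixed $\vt$ a central limit theorem for strongly mixing sequences applied to the vector of sample autocovariances of the length-$sn$ path—valid because $\E|L_1^S|^{4+\delta}<\infty$ guarantees the required $4+\delta$ moments of $Y^{S}(\vt)$—together with the delta method gives the asymptotic normality of $\widehat\pi_{sn}^{\text{LS}}(\vt)$ demanded in \ref{ThmIndEstAssumption1}, with covariance $\Xi_{\text{LS}}(\vt)$ of the sandwich form \eqref{XiGM} for $\phi(y,u)=u$ and $\chi(x)=x-1$; the factor $1/s$ multiplying $\Xi_{\text{LS}}$ in $\II_{\text{Ind}}$ reflects that the simulated sample has length $sn$ against the data length $n$.

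For the derivative condition \ref{ThmIndEstAssumption3} I would differentiate the state-space representation \eqref{Equation 1.1} in $\vt$: under \ref{as_D9} the sensitivity processes $\nabla_\vt Y_{mh}^{S}(\vt)$ and $\nabla_\vt^2 Y_{mh}^{S}(\vt)$ solve linear state-space equations driven by the same $L^{S}$, are stationary, and—by the exponential decay of $\e^{A_\vt u}$ together with the moment budget $\E|L_1^S|^{2N^*}<\infty$, $2N^*>\max(N(\Theta),4+\delta)$—have enough finite moments for a further uniform law of large numbers, the bound $2N^*>N(\Theta)$ supplying the moments needed to control the random fields over the $N(\Theta)$-dimensional domain. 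Consequently $\nabla_\vt\widehat\gamma_{sn}(j,\vt)$ and $\nabla_\vt^2\widehat\gamma_{sn}(j,\vt)$ converge uniformly to $\nabla_\vt\gamma_\vt(jh)$, respectively remain $O_P(1)$; propagating this through the smooth Yule--Walker map and evaluating at any $\overline\vt_n\stoch\vt_0$ (uniform convergence plus continuity) gives $\nabla_\vt\widehat\pi_{sn}^{\text{LS}}(\overline\vt_n)\stoch\nabla_\vt\pi(\vt_0)$ and $\nabla_\vt^2\widehat\pi_{sn}^{\text{LS}}(\overline\vt_n)=O_P(1)$, full column rank of $\nabla_\vt\pi(\vt_0)$ being assumed. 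With \ref{chap8:UnifConv1}--\ref{ThmIndEstAssumption3} verified, \autoref{chap8:IndEstThm}(a),(b) deliver weak consistency and the stated asymptotic normality. The main obstacle is precisely \ref{ThmIndEstAssumption3}: one must show that the differentiated simulated path is a genuine stationary process with sufficiently many moments and establish the uniform-in-$\vt$ convergence of its empirical (derivative) autocovariances—this is where \ref{as_D9}, the exponential mixing of the CARMA process, and the moment budget $2N^*>\max(N(\Theta),4+\delta)$ are all essential; a secondary subtlety is that $\phi(y,u)=u$ is unbounded, so the LS central limit theorem needed for \ref{ThmIndEstAssumption1} cannot be quoted from the GM theory of \autoref{chap8:ThmNorm} but must be argued directly via the mixing central limit theorem for the sample autocovariances.
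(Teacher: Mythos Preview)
Your proposal is correct and follows essentially the same architecture as the paper: verify \ref{chap8:UnifConv1}--\ref{ThmIndEstAssumption3} and invoke \autoref{chap8:IndEstThm}, with \ref{chap8:UnifConv1} and \ref{ThmIndEstAssumption2} coming from the GM theory (\autoref{chap8:ThmCons}, \autoref{chap8:ThmNorm}), and \ref{chap8:UnifConv}, \ref{ThmIndEstAssumption1}, \ref{ThmIndEstAssumption3} from the Yule--Walker representation of the LS-estimator combined with uniform convergence of the (differentiated) empirical autocovariances---the paper packages these as \Cref{Konv Kov}, \Cref{proposition 5.2}, \Cref{Corollary 5.4} and \Cref{chap8:LemmaNormRing}, and your sketch of the uniform-LLN argument via stochastic equicontinuity and the moment budget $2N^*>N(\Theta)$ (Kolmogorov--Chentsov) matches the paper's method. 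The only noteworthy difference is your handling of \ref{ThmIndEstAssumption1}: you propose a direct mixing CLT for the sample autocovariances plus the delta method, whereas the paper instead interprets the LS-estimator as the GM-estimator with $\phi(y,u)=u$, $\chi(x)=x-1$ and re-uses the proof of \autoref{chap8:ThmNorm}, explicitly patching the two places (finiteness of \eqref{chap8:boundedness} and existence/continuity of $\nabla_\pi\mathscr{Q}_{\text{LS}}$) where the boundedness assumptions \ref{as_H2}, \ref{as_H4}, \ref{as_H6} fail---both routes are straightforward and lead to the same $\Xi_{\text{LS}}(\vt)$.
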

We have already proven that \ref{chap8:UnifConv1} and \ref {ThmIndEstAssumption2} of \Cref{chap8:IndEstThm}  are satisfied.
To show the remaining conditions on the LS-estimator $\widehat\pi_n^{\text{LS}}(\vt)$ we require several auxiliary results. The remaining of this section
is devoted to that.

Sufficient conditions for \ref{chap8:UnifConv} and \ref{ThmIndEstAssumption3}  are the weak uniform convergence of the LS-estimator and its derivatives.
Since the LS-estimator is defined via the sample autocovariance function we first derive the uniform weak convergence of the sample
autocovariance function and its derivatives.

\begin{proposition} \label{Konv Kov}
For $j,l\in\{0,\ldots,r\}$ define
\begin{eqnarray*}
    \wh\gamma_{\vt,n}(l,j)=\frac{1}{n-r}\sum_{k=1}^{n-r}Y_{(k+l)h}(\vt)Y_{(k+j)h}(\vt).
\end{eqnarray*}
Then for $i,u\in\{1,\ldots,N(\Theta)\}$ the following statements hold.
\begin{itemize}
    \item[(a)] $\sup_{\vt\in\Theta}|\wh\gamma_{\vt,n}(l,j)-\gamma_{\vt}(l-j)|\stoch 0$.
    \item[(b)] $\sup_{\vt\in\Theta}\left|\frac{\partial}{\partial\vt_i}\wh\gamma_{\vt,n}(l,j)-\frac{\partial}{\partial\vt_i}\gamma_{\vt}(l-j)\right|\stoch 0$.
    \item[(c)] $\sup_{\vt\in\Theta}\left|\frac{\partial^2}{\partial\vt_i \partial\vt_u}\wh\gamma_{\vt,n}(l,j)-\frac{\partial}{\partial\vt_i \partial\vt_u}\gamma_{\vt}(l-j)\right|\stoch 0$.
\end{itemize}
\end{proposition}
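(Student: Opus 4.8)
The plan is to deduce all three statements from a single uniform law of large numbers for the stationary process $Y_{mh}(\vt)$ together with its $\vt$-derivatives. First I would record the stochastic-integral representation $Y_{mh}(\vt) = c_\vt^T\int_{-\infty}^{mh}\e^{A_\vt(mh-u)}e_p\,\dif L_u$ of the stationary solution of \eqref{Equation 1.1}. By \ref{as_D9} the maps $\vt\mapsto A_\vt,c_\vt$ are three times continuously differentiable, and differentiating the matrix exponential under the integral shows that $\vt\mapsto Y_{mh}(\vt)$ is pathwise three times continuously differentiable, with $\partial_{\vt_i}Y_{mh}(\vt)$ and $\partial^2_{\vt_i\vt_u}Y_{mh}(\vt)$ again stochastic integrals against $L$. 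Using the uniform exponential bound $\sup_{\vt\in\Theta}\|\e^{A_\vt u}\|\le C\e^{-\rho u}$ from \autoref{Remark 2.1}, which carries over to the $\vt$-derivatives of $\e^{A_\vt u}$, together with $\E|L_1|^{4+\delta}<\infty$ from \ref{as_D2}, I would establish the uniform moment bounds
\[
\E\Big[\sup_{\vt\in\Theta}|Y_{mh}(\vt)|^{4+\delta}\Big]<\infty,\quad \E\Big[\sup_{\vt\in\Theta}\|\nabla_\vt Y_{mh}(\vt)\|^{4+\delta}\Big]<\infty,\quad \E\Big[\sup_{\vt\in\Theta}\|\nabla_\vt^2 Y_{mh}(\vt)\|^{4+\delta}\Big]<\infty,
\]
and analogously for the third derivative. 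The vector collecting $Y_{mh}(\vt)$ and all these derivatives is a measurable functional of the increments of $L$, hence jointly strictly stationary and, by \citet[Proposition 3.34]{MarquardtStelzer2007}, exponentially strongly mixing and in particular ergodic.

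Second, I would establish pointwise convergence for fixed $\vt$. Differentiation of the finite sum defining $\wh\gamma_{\vt,n}(l,j)$ is immediate, so by the product rule $\partial_{\vt_i}\wh\gamma_{\vt,n}(l,j)$ and $\partial^2_{\vt_i\vt_u}\wh\gamma_{\vt,n}(l,j)$ are sample averages over $k$ of stationary ergodic sequences built from $Y_{(k+l)h}(\vt)$, $Y_{(k+j)h}(\vt)$ and their derivatives. Birkhoff's ergodic theorem then yields almost sure, hence in-probability, convergence of each of these averages to its mean. The moment bounds from the first step legitimize interchanging differentiation and expectation by dominated convergence, so the limits are exactly $\gamma_\vt(l-j)$, $\partial_{\vt_i}\gamma_\vt(l-j)$ and $\partial^2_{\vt_i\vt_u}\gamma_\vt(l-j)$, using that $\gamma_\vt$ is three times continuously differentiable as noted after \eqref{ACF}.

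Third, I would upgrade pointwise to uniform convergence over the compact set $\Theta$ by a stochastic-equicontinuity argument. For the second-derivative statement (c), the mean value theorem gives, for $\vt,\vt'\in\Theta$,
\[
\Big|\partial^2_{\vt_i\vt_u}\wh\gamma_{\vt,n}(l,j)-\partial^2_{\vt_i\vt_u}\wh\gamma_{\vt',n}(l,j)\Big|\le \|\vt-\vt'\|\,\frac{1}{n-r}\sum_{k=1}^{n-r}\sup_{\bar\vt\in\Theta}\big\|\nabla_\vt^3\big[Y_{(k+l)h}(\bar\vt)Y_{(k+j)h}(\bar\vt)\big]\big\|,
\]
and the empirical average on the right is $O_P(1)$ by the third-order moment bound together with the ergodic theorem; this is exactly the point where the three-times differentiability in \ref{as_D9} is needed. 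Covering $\Theta$ by finitely many balls of radius $\delta$, combining the pointwise convergence at the centers with this uniform modulus-of-continuity bound and the continuity of the limit, and letting $\delta\to0$ yields the uniform convergence in (c); the same scheme with one derivative fewer gives (a) and (b).

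The main obstacle I expect is the first step: verifying the uniform-in-$\vt$ moment bounds for the derivative processes. This requires differentiating $\e^{A_\vt u}$ up to third order in $\vt$ and controlling the resulting iterated integrals $\int_0^u \e^{A_\vt s}(\partial_\vt A_\vt)\e^{A_\vt(u-s)}\,\dif s$ and their higher-order analogues so that the uniform exponential decay survives the differentiation, and then carrying these bounds through the stochastic integral by means of the Lévy moment assumption \ref{as_D2}. Once these bounds are in place, the ergodic-theorem and equicontinuity steps are routine.
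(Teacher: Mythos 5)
Your overall architecture coincides with the paper's: represent $Y_{mh}(\vt)$ and its $\vt$-derivatives as moving averages of $L$ (the paper's \autoref{Lemma 5.1}), get pointwise convergence from Birkhoff's ergodic theorem, and upgrade to uniformity over the compact $\Theta$ via stochastic equicontinuity (the paper invokes \citet[Theorem 10.2]{Pollard}; your finite covering argument is the same device). The place where your proposal has a genuine gap is precisely the step you flag as "the main obstacle": the uniform-in-$\vt$ moment bounds $\E[\sup_{\vt\in\Theta}|Y_{mh}(\vt)|^{p}]<\infty$ and their analogues for the derivative fields. Your plan is to control the deterministic kernels $\nabla_\vt^k f_\vt(u)$ uniformly in $\vt$ and then "carry these bounds through the stochastic integral". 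That does not work: the supremum over an uncountable parameter set does not commute with a L\'evy stochastic integral, so a uniform bound on the integrand gives you a bound on $\sup_\vt\E|\cdot|^p$, not on $\E\sup_\vt|\cdot|^p$. What is actually needed is a continuity-of-random-fields argument. The paper supplies this in \Cref{Lemma A.3} of the Supporting Information via the Kolmogorov--Chentsov theorem: it bounds $\E|Z(\vt_1)-Z(\vt_2)|^{2N^*}\le C\|\vt_1-\vt_2\|^{2N^*}$ using the explicit moment formula for L\'evy integrals (\Cref{Lemma A.2}) and concludes H\"older continuity of order $\gamma<1-N(\Theta)/(2N^*)$ together with $\E\,U^{2N^*}<\infty$ for the H\"older constant $U$. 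This is then exactly the random Lipschitz-type modulus that drives the equicontinuity step.

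This also exposes a second problem: the Kolmogorov--Chentsov route forces the moment order to exceed the parameter dimension, $2N^*>N(\Theta)$, which is why \Cref{Theorem:indirect estimator} imposes $\E|L_1^S|^{2N^*}<\infty$ with $2N^*>\max(N(\Theta),4+\delta)$ on the simulated driving process (the process to which \Cref{Konv Kov} is applied). You invoke only the $(4+\delta)$-moment condition of \ref{as_D2}; if $N(\Theta)\ge 4+\delta$ this is not enough to produce any H\"older-continuous modification of the fields $\vt\mapsto Y_{mh}(\vt)$, $\nabla_\vt Y_{mh}(\vt)$, $\nabla^2_\vt Y_{mh}(\vt)$, and your sup-moment bounds cannot be established. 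A more minor point: for part (c) your mean-value-theorem bound requires a pathwise third derivative of $Y_{mh}(\vt)$ with a finite expected supremum, i.e.\ one more differentiation of the random field than the paper uses; the paper instead needs only a H\"older (order $\gamma<1$) modulus of the second-derivative field, obtained from the third derivative of the deterministic kernel $f_\vt$ in \Cref{Lemma A.1}(d) without ever constructing $\nabla^3_\vt Y_{mh}(\vt)$ as a process with controlled supremum.
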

Then the proof of \ref{chap8:UnifConv} follows from \Cref{proposition 5.2}.

\begin{proposition} \label{proposition 5.2} $\mbox{}$
\begin{itemize}
    \item[(a)] $\sup_{\vt\in\Theta}|\wh{\pi}_n^{\text{LS}}(\vt)-\pi(\vt)|\stackrel{\P}{\to}0$.
    \item[(b)] $\sup_{\vt\in\Theta}|\nabla_\vt\wh{\pi}_n^{\text{LS}}(\vt)-\nabla_\vt\pi(\vt)|\stackrel{\P}{\to}0$.
    \item[(c)] $\sup_{\vt\in\Theta}|\nabla^2_\vt\wh{\pi}_n^{\text{LS}}(\vt)-\nabla^2_\vt\pi(\vt)|\stackrel{\P}{\to}0$.
\end{itemize}
\end{proposition}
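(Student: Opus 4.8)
The plan is to exploit that, for an AR$(r)$ model, the LS-estimator has a closed-form Yule--Walker representation as a smooth function of the sample autocovariances treated in \Cref{Konv Kov}, and then to push the uniform convergence of those autocovariances through this smooth map. Setting $\nabla_\pi\LL_{\text{LS}}(\pi,\mathcal{Y}_S^{sn}(\vt))=0$ in \eqref{chap8:DefPiRing}, the autoregressive part of the estimator solves the normal equations
\begin{equation*}
\wh\Gamma_n(\vt)\,\bigl(\wh\pi^{\text{LS}}_{n,1}(\vt),\ldots,\wh\pi^{\text{LS}}_{n,r}(\vt)\bigr)^T=\wh b_n(\vt),
\end{equation*}
where $\wh\Gamma_n(\vt)$ is the $r\times r$ matrix with entries $\wh\gamma_{\vt,n}(r-j,r-i)$ and $\wh b_n(\vt)$ the vector with entries $\wh\gamma_{\vt,n}(r,r-i)$, $i,j=1,\ldots,r$. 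The orthogonality conditions \eqref{chap8:DefAuxParam2} that define $\pi(\vt)$ in \Cref{auxiliaryprop} are precisely the population Yule--Walker equations $\Gamma(\vt)(\pi_1(\vt),\ldots,\pi_r(\vt))^T=b(\vt)$, where $\Gamma(\vt)=(\gamma_\vt(i-j))_{i,j=1}^r$ and $b(\vt)=(\gamma_\vt(1),\ldots,\gamma_\vt(r))^T$. The residual-variance components are likewise explicit: eliminating $\wh\pi^{\text{LS}}_n(\vt)$ via the normal equations gives $(\wh\sigma^{\text{LS}}_n(\vt))^2=\wh\gamma_{\vt,n}(r,r)-\sum_{i=1}^r\wh\pi^{\text{LS}}_{n,i}(\vt)\,\wh\gamma_{\vt,n}(r,r-i)$, and the same identity with the hats removed yields $\sigma^2(\vt)$.

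For part (a) the crucial point is the uniform invertibility of $\Gamma(\vt)$. As the covariance matrix of the nondegenerate random vector $(Y_{rh}(\vt),\ldots,Y_h(\vt))$, $\Gamma(\vt)$ is positive definite for each $\vt$; by \eqref{ACF} and \ref{as_D9} its smallest eigenvalue is continuous in $\vt$, hence bounded below by some $c>0$ on the compact set $\Theta$ (\ref{as_D1}). \Cref{Konv Kov}(a) gives $\sup_{\vt\in\Theta}\|\wh\Gamma_n(\vt)-\Gamma(\vt)\|\stoch0$, so on an event of probability tending to one the matrices $\wh\Gamma_n(\vt)$ are invertible for all $\vt$ simultaneously with $\sup_{\vt\in\Theta}\|\wh\Gamma_n(\vt)^{-1}\|\le 2/c$. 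On this event the resolvent identity $\wh\Gamma_n(\vt)^{-1}-\Gamma(\vt)^{-1}=\wh\Gamma_n(\vt)^{-1}(\Gamma(\vt)-\wh\Gamma_n(\vt))\Gamma(\vt)^{-1}$ turns \Cref{Konv Kov}(a) into $\sup_{\vt\in\Theta}\|\wh\Gamma_n(\vt)^{-1}-\Gamma(\vt)^{-1}\|\stoch0$; combined with $\sup_{\vt}\|\wh b_n(\vt)-b(\vt)\|\stoch0$ this gives the uniform convergence of the autoregressive components, and the $\sigma$-component follows from the continuity of $x\mapsto\sqrt{x}$ together with $\inf_{\vt\in\Theta}\sigma(\vt)>0$.

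For parts (b) and (c) I would differentiate the closed form. Since $\pi(\vt)$ is three times continuously differentiable by \Cref{defbindfct} and the sample autocovariances are differentiable in $\vt$ under \ref{as_D9}, the matrix-inverse rule $\partial_{\vt_i}\wh\Gamma_n(\vt)^{-1}=-\wh\Gamma_n(\vt)^{-1}(\partial_{\vt_i}\wh\Gamma_n(\vt))\wh\Gamma_n(\vt)^{-1}$ shows that $\nabla_\vt\wh\pi^{\text{LS}}_n(\vt)$ and $\nabla^2_\vt\wh\pi^{\text{LS}}_n(\vt)$ are fixed rational expressions in the entries of $\wh\Gamma_n(\vt)$, $\wh b_n(\vt)$ and their $\vt$-derivatives up to order one and two, respectively; replacing the sample quantities by their population counterparts produces exactly the expressions for $\nabla_\vt\pi(\vt)$ and $\nabla^2_\vt\pi(\vt)$. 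Applying \Cref{Konv Kov}(b) and (c) for the derivatives of the autocovariances, together with part (a) and the uniform control of $\wh\Gamma_n(\vt)^{-1}$ already obtained, the same resolvent/continuous-mapping argument on a compact range on which all inverses stay uniformly bounded yields the two uniform limits.

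The main obstacle is the step in (a) that upgrades the pointwise continuity of matrix inversion to a uniform statement over $\Theta$: one must establish that $\Gamma(\vt)$ is uniformly invertible and, consequently, that $\wh\Gamma_n(\vt)$ is invertible for all $\vt$ simultaneously with a uniformly bounded inverse on a high-probability event. Once this uniform-invertibility barrier is cleared, parts (b) and (c) are largely bookkeeping: differentiate the explicit formula, check that only autocovariance derivatives of the orders supplied by \Cref{Konv Kov} appear, and invoke uniform continuity of the relevant rational maps on the compact set where the inverses remain bounded.
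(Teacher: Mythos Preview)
Your proposal is correct and follows essentially the same route as the paper: write the LS estimator in closed Yule--Walker form $\wh\Gamma_n(\vt)^{-1}\wh b_n(\vt)$ (with the corresponding formula for $\wh\sigma_n^2$), use the positive definiteness of $\Gamma(\vt)$ together with compactness of $\Theta$ to get a uniform bound on the inverses, and then push the uniform convergence from \Cref{Konv Kov}(a)--(c) through the explicit rational map and its derivatives obtained via the matrix-inverse differentiation rule. The paper's proof is slightly terser (it simply states $\sup_{\vt}\|\Gamma^{(r-1)}(\vt)^{-1}\|<\infty$ and invokes \eqref{pi} and \eqref{5.3}--\eqref{5.5}), but the substance is identical.
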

A direct consequence from this is the next corollary.

\begin{corollary} \label{Corollary 5.4}
Let $\ov\vt_n$ be a sequence in $\Theta$ with $\ov\vt_n\stoch \vt_0$. Then the following statements hold:
\begin{itemize}
    \item[(a)] $\wh{\pi}_n^{\text{LS}}(\ov\vt_n)\stoch \pi(\vt_0)$.
    \item[(b)] $\nabla_\vt\wh {\pi}_n^{\text{LS}}(\ov\vt_n)\stoch \nabla_\vt\pi(\vt_0)$.
    \item[(c)] $\nabla_\vt^2\wh{\pi}_n^{\text{LS}}(\ov\vt_n)\stoch \nabla_\vt^2\pi(\vt_0)$.
\end{itemize}
\end{corollary}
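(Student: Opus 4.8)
The plan is to deduce all three statements directly from the corresponding uniform convergence results in \Cref{proposition 5.2} by a standard triangle-inequality argument that decouples the randomness of the estimator from the randomness of the argument $\ov\vt_n$. Since the three parts are completely analogous, I would carry out the argument for part (a) in full and then indicate the trivial modifications needed for (b) and (c).

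For (a), I would split, for every $n$,
\begin{equation*}
\| \wh{\pi}_n^{\text{LS}}(\ov\vt_n) - \pi(\vt_0) \| \leq \underbrace{\| \wh{\pi}_n^{\text{LS}}(\ov\vt_n) - \pi(\ov\vt_n) \|}_{=:A_n} + \underbrace{\| \pi(\ov\vt_n) - \pi(\vt_0) \|}_{=:B_n}.
\end{equation*}
The term $A_n$ is bounded from above by $\sup_{\vt\in\Theta}\| \wh{\pi}_n^{\text{LS}}(\vt)-\pi(\vt) \|$, which converges to $0$ in probability by \Cref{proposition 5.2}(a); hence $A_n \stoch 0$. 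For the term $B_n$ I would invoke the continuity of the link function $\pi$, which is guaranteed by \Cref{defbindfct} (where $\pi$ is shown to be three times continuously differentiable), together with the continuous mapping theorem for convergence in probability: since $\ov\vt_n \stoch \vt_0$ and $\pi$ is continuous at the constant $\vt_0$, we obtain $\pi(\ov\vt_n)\stoch\pi(\vt_0)$, so $B_n\stoch 0$. Combining the two convergences yields $\wh{\pi}_n^{\text{LS}}(\ov\vt_n)\stoch\pi(\vt_0)$.

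For (b) and (c) the argument is identical upon replacing $\wh{\pi}_n^{\text{LS}}$ by its first and second gradient, respectively, and $\pi$ by $\nabla_\vt\pi$ and $\nabla^2_\vt\pi$. The two required ingredients remain available: the uniform convergences of $\nabla_\vt\wh{\pi}_n^{\text{LS}}$ and $\nabla^2_\vt\wh{\pi}_n^{\text{LS}}$ are provided by \Cref{proposition 5.2}(b) and (c), and the continuity of $\nabla_\vt\pi$ and $\nabla^2_\vt\pi$ again follows from the three-times continuous differentiability of $\pi$ asserted in \Cref{defbindfct}, so that the continuous mapping theorem applies to $\nabla_\vt\pi(\ov\vt_n)$ and $\nabla^2_\vt\pi(\ov\vt_n)$ exactly as above.

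Since everything reduces to the already-established uniform convergences, there is essentially no obstacle here; the only point worth checking is that the maps $\pi$, $\nabla_\vt\pi$ and $\nabla^2_\vt\pi$ are continuous, which is precisely the smoothness guaranteed by \Cref{defbindfct}. This is exactly why the statement is a direct consequence of \Cref{proposition 5.2}.
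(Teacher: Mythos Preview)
Your proof is correct and follows essentially the same approach as the paper: the triangle-inequality splitting, the bound of the first term by the uniform supremum from \Cref{proposition 5.2}, and the treatment of the second term via the continuity of $\pi$ from \Cref{defbindfct} are exactly what the paper does, with (b) and (c) handled analogously.
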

This corollary already gives \ref{ThmIndEstAssumption3}.

Finally, \ref{ThmIndEstAssumption1} is a consequence of \Cref{chap8:LemmaNormRing} which gives the asymptotic normality of the the LS-estimator.
In principle this follows from \autoref{chap8:ThmNorm}
 by interpreting the least squares estimator as a particular GM-estimator with $\phi(y,u) = u$ and $\chi(x) = x-1$.

\begin{proposition}\label{chap8:LemmaNormRing}
For any $\vt\in\Theta$ the LS-estimator $\wh{\pi}_n^{\text{LS}}(\vt)$ is strongly consistent and as $n\to\infty$,
$$\sqrt{n} (\wh{\pi}_n^{\text{LS}}(\vt) - \pi(\vt)) \stackrel{\DD}{\longrightarrow} \NN \left(0, \Xi_{\text{LS}}(\vt) \right). $$
\end{proposition}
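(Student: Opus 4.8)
The plan is to exploit the observation, already noted in the text, that the LS-estimator is the GM-estimator associated with the choices $\phi(y,u)=u$ and $\chi(x)=x-1$, and to follow the proof of \autoref{chap8:ThmNorm} step by step. The essential difference is that these two functions violate the boundedness requirements of \autoref{as_H} (in particular \ref{as_H2}, \ref{as_H4} and \ref{as_H5}, since $(y,u)\mapsto uy$ is unbounded), so \autoref{chap8:ThmNorm} cannot be invoked directly. Instead I would reprove its three ingredients — the central limit theorem of \autoref{AsNorm}, the Jacobian convergence of \autoref{Lemma 4.8}, and the concluding Taylor expansion — replacing every estimate that relied on boundedness of $\phi(y,u)y$ by a moment bound. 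These moment bounds are exactly what the hypothesis $\E|L_1^S|^{2N^*}<\infty$ with $2N^*>\max(N(\Theta),4+\delta)$ provides, because a stationary CARMA process inherits finite moments of the same order from its driving L\'evy process.

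For the strong consistency I would write the LS-estimator explicitly through the sample autocovariances. With $\wh\gamma_{\vt,n}(l,j)$ as in \autoref{Konv Kov} (for the simulated process $(Y_{mh}^S(\vt))_{m\in\Z}$), the normal equations express $\wh\pi_n^{\text{LS}}(\vt)$ as $\wh\Gamma_n(\vt)^{-1}\wh\gamma_n(\vt)$, a fixed continuous function of finitely many sample autocovariances wherever the sample covariance matrix is invertible. Since the stationary CARMA process is exponentially strongly mixing, hence ergodic (see \citet[Proposition~3.34]{MarquardtStelzer2007}), the ergodic theorem yields $\wh\gamma_{\vt,n}(l,j)\to\gamma_\vt(l-j)$ $\Pas$ The corresponding population normal equations are precisely the defining relations \eqref{chap8:DefAuxParam2} of the auxiliary parameter $\pi(\vt)$ in \autoref{auxiliaryprop}; the limiting $r\times r$ covariance matrix is non-singular because the spectral density of the stationary CARMA process is strictly positive (the analog of \ref{as_G4} at $\gamma=0$), so the continuous mapping theorem gives $\wh\pi_n^{\text{LS}}(\vt)\to\pi(\vt)$ $\Pas$, and the same computation applied to \eqref{chap8:DefPiRing} yields strong consistency of $\wh\sigma_{sn}^{\text{LS}}(\vt)$.

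For the asymptotic normality I would set up the score process $\Psi_k$ of \autoref{AsNorm} with the LS functions, for which it becomes $(U_{k}(\vt)\,(Y_{kh}^S(\vt),\dots,Y_{(k+r-1)h}^S(\vt))^T,\,U_{k}(\vt)^2/\sigma(\vt)^2-1)^T$ up to scaling, where $U_k(\vt)$ is the AR$(r)$ residual of \eqref{chap8:AuxRep}. A central limit theorem for this stationary, exponentially strongly mixing sequence applies once the summands possess a finite $(2+\epsilon)$-th moment; since the first block is quadratic and the second quartic in the process values, this needs $\E|Y_{1}^S(\vt)|^{4+\epsilon}<\infty$, which the moment hypothesis secures and which simultaneously guarantees absolute convergence of the covariance series defining $\II_{\text{LS}}(\vt)$ in \eqref{IIGM}. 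The Jacobian $\nabla_\pi\mathscr{Q}_{\text{LS}}$ is here an explicit affine function of autocovariances, so the analog of \autoref{Lemma 4.8} is immediate from the convergence used for consistency, and $\JJ_{\text{LS}}(\vt)$ is non-singular by the same non-degeneracy. Feeding these into the Taylor expansion of $\mathscr{Q}_{\text{LS}}(\wh\pi_n^{\text{LS}}(\vt),\vt)$ about $\pi(\vt)$, exactly as in the proof of \autoref{chap8:ThmNorm}, delivers $\sqrt{n}(\wh\pi_n^{\text{LS}}(\vt)-\pi(\vt))\stackrel{\DD}{\longrightarrow}\NN(0,\Xi_{\text{LS}}(\vt))$ with $\Xi_{\text{LS}}(\vt)=\JJ_{\text{LS}}(\vt)^{-1}\II_{\text{LS}}(\vt)\JJ_{\text{LS}}(\vt)^{-1}$.

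The main obstacle is precisely the loss of boundedness: in \citet{bustos} and in \autoref{AsNorm}/\autoref{Lemma 4.8} the integrability of the score and the uniform control of the Jacobian are free consequences of $\phi(y,u)y$ and $\partial_u\phi(y,u)\,y$ being bounded, whereas for the LS-estimator these quantities grow quadratically in the data. The technical heart of the argument is therefore to show that the CARMA process driven by $L^S$ has moments of order $4+\epsilon$ — transferred from $\E|L_1^S|^{2N^*}<\infty$ via the moving-average representation $Y_t^S(\vt)=\int c_\vt^T\e^{A_\vt(t-s)}e_p\,\dif L_s^S$ together with the exponential decay $\|\e^{A_\vt u}\|\le C\e^{-\rho u}$ from \autoref{Remark 2.1} — and that these moments suffice for both the mixing CLT and the convergence of the long-run covariance. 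Once this is in place, every step that in the GM proof used boundedness goes through with a dominating moment in its stead.
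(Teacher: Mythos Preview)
Your proposal is correct and follows essentially the same route as the paper: both recognize the LS-estimator as the GM-estimator with $\phi(y,u)=u$, $\chi(x)=x-1$, observe that only the boundedness parts of \autoref{as_H} fail, and patch the two places where boundedness was used---the moment bound \eqref{chap8:boundedness} in \autoref{AsNorm} via $\E|L_1|^{4+\delta}<\infty$ transferred to $(Y_t(\vt))_{t\in\R}$, and the existence/continuity of the Jacobian in \autoref{Lemma 4.8} via the explicit computation of $\nabla_\pi\mathscr{Q}_{\text{LS}}$---together with the non-singularity of $\JJ_{\text{LS}}(\vt)$, which the paper verifies by recognizing its upper-left block as the covariance matrix of $(Y_h(\vt),\dots,Y_{rh}(\vt))$. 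Your consistency argument through the Yule--Walker representation and the ergodic theorem is exactly what underlies \autoref{proposition 5.2}(a), which the paper cites.
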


\subsection{Robustness properties} \label{sec:robust}

%\subsubsection{Resistance and qualitative robustness}\label{sec:qualitative}
   Roughly speaking an estimator is robust when small deviations from the nominal model have not much effect on the estimator. This property is known as qualitative robustness or resistance of the estimator and was originally introduced in \citet{hampel} for i.i.d. sequences.  The same article also gives a slight extension to the case of data that are generated by permutation--invariant distributions, introducing the term $\pi$--robustness (\citet[p.1893]{hampel}). Of course, time series do not satisfy the assumption of permutation invariance in general. Therefore, there have been various attempts to generalize the concept of qualitative robustness to the time series setting.
    %(cf. \citet{papantonigray}, \citet{cox}, \citet{boentefraimanyohai}).
\citet[Theorem 3.1]{boentefraimanyohai} prove that their $\pi_{d_n}$--robustness for time series  is   equivalent to Hampel's $\pi$--robustness for i.i.d. random variables and therefore, extends Hampel's $\pi$--robustness.  They go ahead and define the term resistance as well. The concept of resistance has the intuitive appeal of making a statement about changes in the values of the estimator when comparing two deterministic samples. In contrast, $\pi_{d_n}$--robustness is only a statement concerning the distribution of the estimator, which is in general not easily tractable.  The indirect estimator is weakly resistant and $\pi_{d_n}$--robust.
The explicit definitions and the derivation of these properties for our indirect estimator are given in Section~\ref{sec:qualitative:1} of the Supporting Information.

%\subsubsection{The influence functional}\label{InfFunc}
%We continue our investigation of robustness of $\wh\vt_n^{\text{Ind}}$ with the study of the influence functional of the indirect estimator.
 Intuitively speaking,
 the influence functional measures the change in the asymptotic bias of an estimator caused by an infinitesimal amount of contamination in the data. This measure of robustness was originally introduced as influence curve by \citet{hampel2} for i.i.d. processes.  It was later generalized to the time series context by \citet{kunsch} who explicitly studies the estimation of autoregressive processes. However, in the paper of K\"{u}nsch only estimators which depend on a finite--dimensional marginal distribution of the data--generating process and a very specific form of contaminations are considered. To remedy this, a further generalization was then made by \citet{martinyohai} who consider the influence functional and explicitly allow for the estimators to depend on the measure of the process which makes more sense in the time series setup  (cf. \citet[Section 4]{martinyohai}).
 In the sense of \citet[Section 4]{martinyohai} the indirect estimator has a bounded influence functional; see Section~\ref{InfFunc:1} in the Supporting Information.

%\subsubsection{The breakdown point}
The breakdown point is (for a sample of data with fixed length
$n$) the maximum percentage of outliers which can be contained in the data without
''ruining'' the estimator. In this sense, it measures how much the observed data can
deviate from the nominal model before catastrophic effects in the estimation procedure
happen. However, the formal definition depends on the model and the estimator.  \citet{maronnayohai91}
and \citet{maronnabustosyohai} deal explicitly with the breakdown point of GM-estimators in regression models and
\citet{martinyohai2} and \citet{martin} study it in the time series context. A very general definition
of the breakdown point is given in \citet[Definition 1 and Definition 2]{gentonlucas}.
Heuristically speaking, the fundamental idea of that definition is that the breakdown point is the smallest amount of outlier contamination with the property that the performance of the estimator does not get worse anymore if the contamination is increased further.
As already mentioned in
\citet[p. 239]{martin} (the proof is given in the unpublished paper of \citet{martinjong}), and later in \citet[p. 377]{delunagenton} and \citet[p. 89]{gentonlucas}, the breakdown point of the
GM-estimator applied to estimate the parameters of an AR$(r)$ process is $1/(r+1)$. Hence, the breakdown point of our indirect estimator is
as well $1/(r+1)$ since  the other building block of the indirect estimator, the estimator $\wh{\pi}_n^{\text{S}}(\vt)$ is applied to a simulated outlier--free sample.
%In summary, the indirect estimator of this paper satisfies several desirable robustness properties.

\section{Simulation study }\label{sect:Simulation}

 We simulate  CARMA processes on the interval $[0, 1000]$ and choose a sampling distance of $h=1$, resulting in $n=1000$ observations of the discrete--time process. The simulated processes are driven either by a standard Brownian motion or by a univariate NIG (normal inverse Gaussian) Lévy process.
 The increments of a  NIG-L\'evy process $L(t)-L(t-1)$ have the density \label{pageNIG}
\begin{align*}
f_{NIG}(x;\mu,\alpha,\beta,\delta)=\frac{\alpha\delta}{\pi}\exp(\delta\sqrt{\alpha^2 -\beta^2}+\beta x)\frac{K_1(\alpha\sqrt{\delta^2+x^2})}{\sqrt{\delta^2+x^2}} , \quad x\in\R,
\end{align*}
$\mu\in\R$ is a location parameter, $\alpha\geq 0$ is a shape parameter, $\beta\in\R$ is a symmetry parameter and $K_1$ is the modified Bessel
function of the third kind with index $1$. % (cf. \cite[Appendix A]{Schoutens}).
The variance of the process is then
$\sigma_L^2=\delta\alpha^2/(\alpha^2-\beta^2)^{\frac{3}{2}}.$
%For more details on NIG-Lévy processes see, e.g.,\cite{barndorff1997}.
For the NIG Lévy process we use the parameters $\alpha=3$, $\beta=1$,  $\delta= 2.5145$ and $\mu = -0.8890$. These parameters result in a zero--mean L\'{e}vy process with variance approximately $1$ which allows for comparison of the results to the standard Brownian motion case.
For the outlier model we choose additive outliers where the process $(V_m)_{m \in \Z}$ is a sequence of i.i.d. Bernoulli random variables with $\P( V_1 = 1) = \gamma$. The process $(Z_m)_{m \in \Z}$ is  $Z_m = \xi$ for $m \in \Z$ where  $\xi$ and $\gamma$ take different values in different simulations. %In the experiment where $(Z_m)_{m \in \Z}$ has a different structure, this will be mentioned

The indirect estimator is defined as in \Cref{sec:indirect estimator CARMA}. We take $\wh \pi_n$ as GM-estimator $\widehat\pi_n^{\text{GM}}(\vt_0)$ using the \texttt{R} software which provides the pre--built function \texttt{arGM} in the package \texttt{robKalman} for applying GM-estimators to AR processes. This function uses a Mallows estimator as in \Cref{GMexample}(a). The weight function $w(y)$ is the Tukey bisquare function from \Cref{GMexample}(b) applied to $\| y \|$, for the function $\psi(u)$ the user can choose between the Huber $\psi_k$--function and the bisquare function. The function is implemented as an iterative least squares procedure as described by \citet[p. 231ff.]{martin}. % and therefore also allows to use first some iteration steps with the Huber $\psi_k$--function and then some steps with the bisquare function. %As advocated by \citet{martin},
    We do $6$ iterations using the Huber function and then $50$ iterations with the bisquare function, which is the maximum number of iterations where the algorithm stops earlier if convergence is achieved. In our experiments we use $k=4$ for the tuning constant of the $\psi_k$--function.  In general, we set $s=75$ to obtain the simulation--based observations $\mathcal{Y}^{sn}_S(\vt)=(Y_{h}^S(\vt), \ldots, Y_{snh}^S(\vt))$ in the simulation part of the indirect procedure. The type of L\'{e}vy process used for the simulation part is of the same type as the Lévy process driving the CARMA process. For the estimator $\wh \pi_n^{\text{S}}(\vt)$ we apply the least squares estimator and as weighting matrix $\Omega$ we take
the identity matrix for convenience reasons. In some  experiments we first estimated the asymptotic covariance matrix of the GM-estimator by the empirical covariance matrix of a suitable number of independent realizations of $\wh \pi_n$. Setting $\Omega$ as the inverse of that estimate did not significantly affect the procedure positively or negatively so that the use of the convenient identity matrix seems justified.
In each experiment, we calculate the indirect estimator and, for comparison purposes, the QMLE as defined in \citet{schlemmstelzer}. For the indirect estimator as well for the
QMLE we use  50 independent samples and report on the average estimated value, the bias and the empirical variance of the parameter estimates.

%We consider three different scenarios of outlier contamination. In the first case, we set $\xi = 5$ and $\gamma=0.1$. In the second, we set $\xi=10$ and $\gamma=0.1$, while for the last one we choose $\xi=5$ and $\gamma=0.15$.

\begin{table}[h] \small
\centering
\begin{tabular}{|l||c|c|c||c|c|c||c|c|c|}
\hline
 %\; & \multicolumn{6}{c|}{Indirect estimation} \\
 %\hline
   \; & \multicolumn{3}{c||}{$\xi=0$, $\gamma=0$ (uncontaminated)} & \multicolumn{3}{c||}{$\xi=10$, $\gamma=0.1$}  & \multicolumn{3}{c|}{$\xi=5$, $\gamma=0.15$}\\
 \hline
  \; & Mean & Bias & Var & Mean & Bias & Var & Mean & Bias & Var\\
  \hline
    $r=1$ & 2.1187 & -0.1187 & 0.1008 & -2.0027 & -0.0027 & 0.1004 & -2.0711 & -0.0711 & 0.0905\\
  $r=2$ & -2.1238 & -0.1238 & 0.0956 &-2.1121 & -0.1121 & 0.1681 & -1.9555 & 0.0445 & 0.1494\\
  $r=3$ & -2.1214 & -0.1214 & 0.0937 &-2.4828 & -0.4828 & 0.4811 & -2.4838 & -0.4838 & 0.3367\\
  \hline
 % \hline
 %  \; & \multicolumn{6}{c|}{Indirect estimation} \\
% \hline
%   \; &  \multicolumn{3}{c||}{$\xi=10$, $\gamma=0.1$}$}  & \multicolumn{3}{c|}{$\xi=5$, $\gamma=0.15\\
% \hline
%  \; & Mean & Bias & Var & Mean & Bias & Var \\
%  \hline
%    $r=1$ & -1.8070 & 0.1930 & 0.0655 & -2.0711 & -0.0711 & 0.0905\\
%  $r=2$ &   -1.8464 & 0.1536 & 0.0811 & -1.9555 & 0.0445 & 0.1494\\
%  $r=3$ &   -1.8607 & 0.1393 & 0.1037 & -2.4838 & -0.4838 & 0.3367\\
% \hline
\end{tabular}
\caption{Indirect estimation of a CARMA$(1,0)$ process with parameter $\vt_0=-2$ driven by a Brownian motion with $n=1000$.} \label{Table 2A}
\end{table}

First, CARMA(1,0) processes with parameter $\vt_0\in (-\infty,0)$ are studied where $A_{ \vt_0}=\vt_0$ and $c_{\vt_0}=1$. These processes are of particular interest because their discretely sampled version admit  an AR(1) representation. For this reason, one would expect the indirect procedure to work very well  as the auxiliary representation is actually exact. Initially, in Table~\ref{Table 2A}, we estimate contaminated and uncontaminated CARMA(1,0) processes with $\vt_0=-2$ driven by a Brownian motion
using in  the indirect estimation method an auxiliary AR$(r)$ process with   $r=1,2,3$.
For uncontaminated CARMA(1,0) processes the parameter  $r=1$ gives the lowest absolute bias where the variance is the highest. However, the bias and the variances are very similar. By contrast with contaminated CARMA(1,0) processes,
 if we increase $r$ the   variances increase. That is not surprising because $r=1$ reflects
the true model and including more parameters than necessary results in more estimation errors. As well the bias is quite low for $r=1$.

Next, in Table~\ref{Table 1A}, we compare the indirect estimator with $r=1$ and the QMLE  for a Brownian motion driven CARMA$(1,0)$ process with either
$\vt_0=-2$ or $\vt_0=-0.2$ . For $\vt_0=-0.2$ the CARMA$(1,0)$ process is not so far away from a non-stationary process.  In both cases we see that the QMLE and the indirect estimator work quite well for uncontaminated CARMA(1,0) processes (top of Table~\ref{Table 1A}). The QMLE has a lower variance in both cases where for
$\vt_0=-2$ the absolute bias of the indirect estimator  and for $\vt_0=-0.2$ the bias of the QMLE is lower. But still for both estimation procedures the values are
comparable.
\begin{table}[h] \small
\centering
\begin{tabular}{|l||c|c|c||c|c|c|c|}
 \hline
 \; & \multicolumn{6}{c|}{$\xi=0$, $\gamma=0$ (uncontaminated)} \\
 \hline
   \; & \multicolumn{3}{c||}{QMLE} & \multicolumn{3}{c|}{Indirect}  \\
 \hline
  \; & Mean & Bias & Var & Mean & Bias & Var \\
  \hline
    $\vt_0=-2$ & -2.1424 & -0.1424 & 0.0913 &  -2.1187 & -0.1187 & 0.1008 \\
  \hline
    $\vt_0=-0.2$ & -0.2031 & -0.0031 & 0.0003 & -0.2100 & -0.0100 & 0.0009\\
  \hline
 \hline
 \; & \multicolumn{6}{c|}{$\xi=5$, $\gamma=0.1$} \\
 \hline
   \; & \multicolumn{3}{c||}{QMLE} & \multicolumn{3}{c|}{Indirect}  \\
 \hline
  \; & Mean & Bias & Var & Mean & Bias & Var \\
  \hline
     $\vt_0=-2$ & -2.4017 & -0.4017 & 0.1487 & -2.0027 & -0.0027 & 0.1004  \\
    \hline
    $\vt_0=-0.2$ & -2.4513 & -2.2513 & 0.0093  & -0.1981 & 0.0019 & 0.0010\\
  \hline
 \hline
 \; & \multicolumn{6}{c|}{$\xi=10$, $\gamma=0.1$} \\
 \hline
   \; & \multicolumn{3}{c||}{QMLE} & \multicolumn{3}{c|}{Indirect}  \\
 \hline
  \; & Mean & Bias & Var & Mean & Bias & Var \\
  \hline
    $\vt_0=-2$ &   -4.7942 & -2.7942 & 0.0315 & -1.8070 & 0.1930 & 0.0655 \\
    \hline
    $\vt_0=-0.2$ & -4.9139 & -4.7139 & 0.0440 & -0.1981 & 0.0019 & 0.0010\\
  \hline
 \hline
 \; & \multicolumn{6}{c|}{$\xi=5$, $\gamma=0.15$} \\
 \hline
   \; & \multicolumn{3}{c||}{QMLE} & \multicolumn{3}{c|}{Indirect}  \\
 \hline
  \; & Mean & Bias & Var & Mean & Bias & Var \\
  \hline
    $\vt_0=-2$ & -2.1207 & -0.1207 & 0.2592  & -2.0711 & -0.0711 & 0.0905\\
    \hline
    $\vt_0=-0.2$ & -2.9511 & -2.7511 & 0.0102 & -0.1772 & 0.0228 & 0.0008  \\
  \hline
  \end{tabular}
\caption{Estimation results for CARMA$(1,0)$ processes with parameter $\vt_0$ driven by a Brownian motion with $n=1000$ and $r=1$.} \label{Table 1A}
\end{table}
If we  allow additionally outliers in the CARMA(1,0) model, already in the case  $\xi=5$ and $\gamma=0.1$, the indirect estimator performs vastly better than the QMLE giving a much less biased estimate and lower variance. % In particular for $\vt_0=-0.2$ the QMLE has lost all its information about the true parameter and provides no useful estimate anymore. On the other hand, the indirect estimator stays close to the true value.
For $\xi=10$ and $\gamma=0.1$ the QMLE  is far away from the true values where the indirect estimator still gives good results.
%Note that for $\vt_0=-0.2$ the estimation results of the indirect estimation are the same in the case $\xi=5$ and $\gamma=0.1$ and $\xi=10$ and $\gamma=0.1$. An explanation is that \ldots GM estimation was the same \ldots
Increasing $\gamma$ to $0.15$ but keeping $\xi = 5$ shows that both estimators perform worse than in the situation with $\gamma=0.1$, which is to be expected. But once again, the indirect estimator gives excellent results. However, the QMLE runs much faster than the indirect estimator. %On a 2.6 GHz Intel Core i5 computer the mean CPU time for the QMLE is 2.4166 and for the indirect estimator is 30.9036 in the first contamination setting.
%Comparing these studies, we see that for the indirect estimator the percentage of outliers has a bigger effect on the estimates than the actual size of the outliers.
%  Note that already $\xi=5$ represents quite large outliers, since for a sample path in this situation we typically observe that the values of the discretely sampled process lie %between $-3.5$ and $3.5$.   The results of the simulation studies for the  CARMA(1,0) process with parameter $\vt^{(1)}_0 = -2$ are given in \autoref{Table 1A}.

In a further study we investigate CARMA(3,1) processes. This especially means that the sampled process is not a weak AR process anymore.  The true parameter is
$\vt_0 = \begin{pmatrix} \vt_1 & \vt_2 & \vt_3 & \vt_4 & \vt_5 \end{pmatrix}$ such that
\begin{eqnarray*}
A_{\vt_0}=\begin{pmatrix}
  0      & 1            & 0            \\
  0      & 0            & 1         \\
  \vt_1   & \vt_2     & \vt_3
    \end{pmatrix}\in \R^{3\times 3}\quad
    \text{ and } \quad c_{\vt_0}=(\vt_4,\, \vt_5,0).
\end{eqnarray*}
For the CARMA(3,1) model  we choose $r=5$, which is also the minimum order of the auxiliary AR representation to satisfy \autoref{Assumption B}.
We also tried different values of $r$ but they didn't give better results (see Table~\ref{Table 7A} in the Supporting Information). In contrast, for
contaminated CARMA(3,1) processes  it seems that the  absolute bias and variance are  the lowest for $r=5$.

\begin{table}[h] \small
\centering
\begin{tabular}{|l||c|c|c||c|c|c|c|}
\hline
 \; & \multicolumn{6}{c|}{$n=200$} \\
 \hline
   \; & \multicolumn{3}{c||}{QMLE} & \multicolumn{3}{c|}{Indirect}  \\
 \hline
  \; & Mean & Bias & Var & Mean & Bias & Var \\
  \hline
   $\vt_1=-1$ & -1.02574 & -0.02574 & 0.01814 & -1.32232 & -0.32232 & 0.94063\\
   \hline
    $\vt_2=-2$ &-1.99600 & 0.00400 & 0.01105 & -2.28763 & -0.28763 & 1.29387\\
    \hline
    $\vt_3=-2$ &-1.98396 & 0.01604 & 0.02499  & -2.11439 & -0.11439 & 0.42097 \\
    \hline
    $\vt_4=0$ & -0.00688 & -0.00688 & 0.01309 & 0.00287 & 0.00287 & 0.02619\\
    \hline
    $\vt_5=1$ &  0.99773 & -0.00227 & 0.01599 & 0.88711 & -0.11289 & 0.08930\\
    \hline
    \hline
\begin{comment}
 \hline
  \; & \multicolumn{6}{c|}{$n=500$} \\
 \hline
   \; & \multicolumn{3}{c||}{QMLE} & \multicolumn{3}{c|}{Indirect}  \\
 \hline
  \; & Mean & Bias & Var & Mean & Bias & Var \\
  \hline
   $\vt_1=-1$ & -1.02452 & -0.02452 & 0.00291 & -1.06069 & -0.06069 & 0.01299\\
   \hline
    $\vt_2=-2$ &  -2.00210 & -0.00210 & 0.00862 & -2.03660 & -0.03660 & 0.02302\\
    \hline
    $\vt_3=-2$ & -1.97154 & 0.02846 & 0.00888& -1.94386 & 0.05614 & 0.01541\\
    \hline
    $\vt_4=0$ & -0.00620 & -0.00620 & 0.00302& -0.03115 & -0.03115 & 0.00969\\
    \hline
    $\vt_5=1$ & 0.99565 & -0.00435 & 0.00408 & 0.93227 & -0.06773 & 0.02011\\
    \hline
 \hline
\end{comment}
 \; & \multicolumn{6}{c|}{$n=1000$} \\
 \hline
   \; & \multicolumn{3}{c||}{QMLE} & \multicolumn{3}{c|}{Indirect}  \\
 \hline
  \; & Mean & Bias & Var & Mean & Bias & Var \\
  \hline
   $\vt_1=-1$ & -1.01492 & -0.01492 & 0.00129& -1.02333 & -0.02333 & 0.00515\\
   \hline
    $\vt_2=-2$ & -1.99192 & 0.00808 & 0.00310& -1.98112 & 0.01888 & 0.00905 \\
    \hline
    $\vt_3=-2$ & -1.99376 & 0.00624 & 0.00411& -2.00188 & -0.00188 & 0.01286\\
    \hline
    $\vt_4=0$ & 0.01404 & 0.01404 & 0.00109 & 0.01253 & 0.01253 & 0.00436\\
    \hline
    $\vt_5=1$ & 1.01311 & 0.01311 & 0.00044  & 1.00248 & 0.00248 & 0.00356\\
    \hline
 \hline
 \; & \multicolumn{6}{c|}{$n=5000$} \\
 \hline
   \; & \multicolumn{3}{c||}{QMLE} & \multicolumn{3}{c|}{Indirect}  \\
 \hline
  \; & Mean & Bias & Var & Mean & Bias & Var \\
  \hline
   $\vt_1=-1$ & -1.00647 & -0.00647 & 0.00016& -1.00534 & -0.00534 & 0.00046\\
   \hline
    $\vt_2=-2$ & -1.99953 & 0.00047 & 0.00005& -1.99576 & 0.00424 & 0.00034  \\
    \hline
    $\vt_3=-2$ & -1.99398 & 0.00602 & 0.00025& -1.99930 & 0.00070 & 0.00103 \\
    \hline
    $\vt_4=0$ &  0.00843 & 0.00843 & 0.00004 & 0.00089 & 0.00089 & 0.00035\\
    \hline
    $\vt_5=1$ & 1.00849 & 0.00849 & 0.00003 & 0.99864 & -0.00136 & 0.00027 \\
    \hline
 \hline
\end{tabular}
\caption{Estimation results for an  uncontamined CARMA$(3,1)$ process with parameter $\vartheta_0=(\vt_1,\vt_2,\vt_3,\vt_4,\vt_5)$ driven by a Brownian motion with $r=5$.}\label{Table 4A}
\end{table}

\begin{table}[h] \small
\centering
\begin{tabular}{|l||c|c|c||c|c|c|c|}
\hline
 \; & \multicolumn{6}{c|}{$n=200$} \\
 \hline
   \; & \multicolumn{3}{c||}{QMLE} & \multicolumn{3}{c|}{Indirect}  \\
 \hline
  \; & Mean & Bias & Var & Mean & Bias & Var \\
  \hline
   $\vt_1=-1$ & -1.01143 &-0.01143 & 0.00735 & -1.21354 & -0.21354 & 0.48999\\
   \hline
    $\vt_2=-2$ & -2.03219 & -0.03219 & 0.01168 & -2.10578 & -0.10578 & 0.67909\\
    \hline
    $\vt_3=-2$ & -1.95505 &  0.04495 & 0.01746 & -2.02967 & -0.02967 & 0.20588 \\
    \hline
    $\vt_4=0$ & 0.01161 &  0.01161 & 0.00766 &  0.02754 &  0.02754 & 0.03439\\
    \hline
    $\vt_5=1$ & 1.01040 &  0.01040 & 0.00462 &  0.89459 & -0.10541 & 0.17048\\
    \hline
     \hline
  \begin{comment}
  \; & \multicolumn{6}{c|}{$n=500$} \\
 \hline
   \; & \multicolumn{3}{c||}{QMLE} & \multicolumn{3}{c|}{Indirect}  \\
 \hline
  \; & Mean & Bias & Var & Mean & Bias & Var \\
  \hline
   $\vt_1=-1$ & -1.02025 & -0.02025 & 0.00232 & -1.10965 & -0.10965 & 0.18988\\
   \hline
    $\vt_2=-2$ & -2.01445 & -0.01445 & 0.00468 & -2.08582 & -0.08582 & 0.39595 \\
    \hline
    $\vt_3=-2$ &  -1.96476 &  0.03524 & 0.00528 &  -2.00143 & -0.00143 & 0.08025\\
    \hline
    $\vt_4=0$ & 0.00713 &  0.00713 & 0.00172 & -0.00476 & -0.00476 & 0.00658\\
    \hline
    $\vt_5=1$ &  1.00801 &  0.00801 & 0.00165 &  0.94771 & -0.05229 & 0.05219\\
    \hline
 \hline
\end{comment}
 \; & \multicolumn{6}{c|}{$n=1000$} \\
 \hline
   \; & \multicolumn{3}{c||}{QMLE} & \multicolumn{3}{c|}{Indirect}  \\
 \hline
  \; & Mean & Bias & Var & Mean & Bias & Var \\
  \hline
   $\vt_1=-1$ & -1.01502 & -0.01502 & 0.00123 & -1.03184 & -0.03184 & 0.00510\\
   \hline
    $\vt_2=-2$ & -2.00152 & -0.00152 & 0.00125 & -1.98345 &  0.01655 & 0.00682\\
    \hline
    $\vt_3=-2$ & -1.98346 &  0.01654 & 0.00232 & -1.99289 &  0.00711& 0.00944\\
    \hline
    $\vt_4=0$ &  0.00235 &  0.00235 & 0.00052 &  -0.01551 & -0.01551 & 0.00544\\
    \hline
    $\vt_5=1$ & 1.00318 &  0.00318 & 0.00045 & 0.99433 & -0.00567 & 0.00201\\
    \hline
 \hline
 \; & \multicolumn{6}{c|}{$n=5000$} \\
 \hline
   \; & \multicolumn{3}{c||}{QMLE} & \multicolumn{3}{c|}{Indirect}  \\
 \hline
  \; & Mean & Bias & Var & Mean & Bias & Var \\
  \hline
   $\vt_1=-1$ & -1.00640 & -0.00640 & 0.00005 & -1.00584 & -0.00584 & 0.00049\\
   \hline
    $\vt_2=-2$ & -1.99918 &  0.00082 & 0.00006 & -1.99993  & 0.00007 & 0.00020  \\
    \hline
    $\vt_3=-2$ & -1.99441  & 0.00559 & 0.00010 & -1.99437 &  0.00563 & 0.00074\\
    \hline
    $\vt_4=0$ &  0.00831 &  0.00831 & 0.00004 & -0.00309 & -0.00309 & 0.00021\\
    \hline
    $\vt_5=1$ & 1.00850 &  0.00850 & 0.00004 &  0.99607 & -0.00393 & 0.00031\\
    \hline
 \hline
\end{tabular}
\caption{Estimation results for an  uncontaminated CARMA$(3,1)$ process with parameter $\vartheta_0=(\vt_1,\vt_2,\vt_3,\vt_4,\vt_5)$ driven by a NIG Lévy process with $r=5$.}\label{Table 4B}
\end{table}

In the first instance, we compare the QMLE and the indirect estimator for uncontaminated \linebreak CARMA$(3,1)$ processes in Table~\ref{Table 4A} and Table~\ref{Table 4B}.
In Table~\ref{Table 4A} the driving Lévy process is  a Brownian motion where in Table~\ref{Table 4B} it is  a NIG-Lévy process. The results are very similar.
The QMLE has in general a lower variance than the indirect estimator. For $n=200$ and $n=1000$ it seems as well that the QMLE has a lower absolute bias. But for $n=5000$ this changes
and the indirect estimator has a lower absolute bias.
However,  both estimator perform excellent. For the Brownian motion driven model the QML optimization failed for $n=200$, $1000$ and $5000$ in 5, 4, and 5 cases, respectively, where for the NIG driven model it failed in 6, 5, and 2 cases, respectively. The indirect estimator never failed. The error occurs when the estimated value of $\vt_0$ is not an element of $\Theta$ anymore. The results in the table are averaged over experiments in which the algorithm did deliver a result, the failed attempts were discarded.
 We obtained similar results as in Table~\ref{Table 4A} and Table~\ref{Table 4B} for different parameter values.
%Note that for the first parameter  $\vt_0=(-0.1,\, -1.2,\, -2.1,\, 0.1)$ the matrix $A_{\vt_0}$ has eigenvalue $-0.1$. Thus, the process is not so far away from a non-stationary process but this doesn't seem to have an influence on both estimations.

\begin{table}[h] \small
\centering
\begin{tabular}{|l|c|c|c||c|c|c|c|}
\hline
 \begin{comment}
 \; & \multicolumn{6}{c|}{$\xi=0, \gamma =0$ (uncontaminated)} \\
  \hline
   \; & \multicolumn{3}{c||}{QMLE} & \multicolumn{3}{c|}{Indirect}  \\
 \hline
  \; & Mean & Bias & Var & Mean & Bias & Var \\
  \hline
   $\vt_1=-0.1$ & -0.10186 & -0.00186 & 0.00064 & -0.12204 & -0.02204 & 0.00365 \\
   \hline
    $\vt_2=-1.2$ &  -1.20025 & -0.00025 & 0.00025 & -1.23417 & -0.03417 & 0.00846\\
    \hline
    $\vt_3=-2.1$ & -2.09793 & 0.00207 & 0.00019  & -2.04382 & 0.05618 & 0.02458 \\
    \hline
    $\vt_4=0$ &  0.00080 & 0.00080 & 0.00001 & -0.00368 & -0.00368 & 0.00212 \\
    \hline
    $\vt_5= 1$ & 1.00766 & 0.00766 & 0.00051 & 0.98731 & -0.01269 & 0.01179\\
    \hline
 \hline
   \; & \multicolumn{3}{c||}{QMLE} & \multicolumn{3}{c|}{Indirect}  \\
 \hline
  \; & Mean & Bias & Var & Mean & Bias & Var \\
  \hline
     $\vt_1=-1$ & -1.01492 & -0.01492 & 0.00129& -1.02333 & -0.02333 & 0.00515\\
   \hline
    $\vt_2=-2$ & -1.99192 & 0.00808 & 0.00310& -1.98112 & 0.01888 & 0.00905 \\
    \hline
    $\vt_3=-2$ & -1.99376 & 0.00624 & 0.00411& -2.00188 & -0.00188 & 0.01286\\
    \hline
    $\vt_4=0$ & 0.01404 & 0.01404 & 0.00109 & 0.01253 & 0.01253 & 0.00436\\
    \hline
    $\vt_5=1$ & 1.01311 & 0.01311 & 0.00044  & 1.00248 & 0.00248 & 0.00356\\
    \hline
 \hline
   \end{comment}
 \; & \multicolumn{6}{c|}{$\xi=5, \gamma =0.1$} \\
 \hline
   \; & \multicolumn{3}{c||}{QMLE} & \multicolumn{3}{c|}{Indirect}  \\
 \hline
  \; & Mean & Bias & Var & Mean & Bias & Var \\
  \hline
    $\vt_1=-1$ &   -0.4288 & 0.5712 & 0.0055 & -1.0031 & -0.0031 & 0.0131\\
  \hline
  $\vt_2=-2$ &   -2.9691 & -0.9691 & 0.0088 & -2.0444 & -0.0444 & 0.0606\\
  \hline
  $\vt_3=-2$ & -2.4261 & -0.4261 & 0.6193  & -1.9969 & 0.0031 & 0.0325\\
  \hline
  $\vt_4=0$ & 1.9267 & 1.9267 & 0.0173 & -0.0157 & -0.0157 & 0.0070 \\
  \hline
  $\vt_5=1$ &  1.8987 & 0.8987 & 0.0028 & 0.9147 & -0.0853 & 0.0243 \\
 \hline
 \hline
 \; & \multicolumn{6}{c|}{$\xi=10, \gamma =0.1$} \\
 \hline
   \; & \multicolumn{3}{c||}{QMLE} & \multicolumn{3}{c|}{Indirect}  \\
 \hline
  \; & Mean & Bias & Var & Mean & Bias & Var \\
  \hline
    $\vt_1=-1$ & -0.0812 & 0.9188 & 0.0012 & -1.0031 & -0.0031 & 0.0131 \\
  \hline
  $\vt_2=-2$ & -3.6798 & -1.6798 & 0.4044 & -2.0446 & -0.0446 & 0.0608  \\
  \hline
  $\vt_3=-2$ &   -3.8853 & -1.8853 & 21.1039 & -1.9966 & 0.0034 & 0.0325 \\
  \hline
    $\vt_4=0$ &  3.9956 & 3.9956 & 0.1610 & -0.0157 & -0.0157 & 0.0070\\
  \hline
  $\vt_5=1$ &  2.3854 & 1.3854 & 0.1283 &  0.9144 & -0.0856 & 0.0243 \\
  \hline
 \hline
 \; & \multicolumn{6}{c|}{$\xi=5, \gamma =\frac16$} \\
 \hline
   \; & \multicolumn{3}{c||}{QMLE} & \multicolumn{3}{c|}{Indirect}  \\
 \hline
  \; & Mean & Bias & Var & Mean & Bias & Var \\
  \hline
    $\vt_1=-1$ & -0.1784 & 0.8216 & 0.0136 & -0.9476 & 0.0524 & 0.0426  \\
  \hline
  $\vt_2=-2$ & -4.8587 & -2.8587 & 15.9100 &  -2.1688 & -0.1688 & 0.1375\\
  \hline
  $\vt_3=-2$ & -12.0764 & -10.0764 & 268.3894  & -1.9481 & 0.0519 & 0.0469 \\
  \hline
    $\vt_4=0$ &  2.8032 & 2.8032 & 0.2815 & -0.0491 & -0.0491 & 0.0101 \\
  \hline
  $\vt_5=1$ & 2.1475 & 1.1475 & 0.2928 & 0.6996 & -0.3004 & 0.0705 \\
  \hline
 \hline
 \; & \multicolumn{6}{c|}{$\xi=5, \gamma =0.25$} \\
 \hline
   \; & \multicolumn{3}{c||}{QMLE} & \multicolumn{3}{c|}{Indirect}  \\
 \hline
  \; & Mean & Bias & Var & Mean & Bias & Var \\
  \hline
    $\vt_1=-1$ &   -0.1427 & 0.8573 & 0.0525  & -0.6035 & 0.3965 & 7.5738 \\
  \hline
  $\vt_2=-2$ &  -6.1993 & -4.1993 & 52.7257 & -3.8476 & -1.8476 & 25.0781\\
  \hline
  $\vt_3=-2$ & -13.7370 & -11.7370 & 767.4184 & -6.0640 & -4.0640 & 417.3265 \\
  \hline
    $\vt_4=0$ &  3.1533 & 3.1533 & 0.2231  & 2.1462 & 2.1462 & 11.3420\\
  \hline
  $\vt_5=1$ &   1.5824 & 0.5824 & 1.8174 & 0.7653 & -0.2347 & 26.1110\\
  \hline
\end{tabular}
\caption{Estimation results for a CARMA$(3,1)$ process with parameter $\vartheta_0=(\vt_1,\vt_2,\vt_3,\vt_4,\vt_5)$ driven by a Brownian motion with $n=1000$ and $r=5$.}\label{Table 3A}
\end{table}

Further, for a Brownian motion driven CARMA(3,1) process we estimate $\vt_0$ for each of the following contamination configurations in \autoref{Table 3A} (see as well Table~\ref{Table 5A} in the Supporting Information for different values of $n$): $\xi = 5$ and $\gamma = 0.1$, $\xi=10$ and $\gamma=0.1$, $\xi=5$ and $\gamma=1/6$, and $\xi=5$ and $\gamma=0.25$. In this situation, the breakdown point has an upper bound of $1/6$ since we have $r=5$. Hence, $\gamma=0.25$ lies above the breakdown point and we expect to encounter problems in the estimation procedure, while for $\gamma \leq 1/6$ these problems should not occur. This is indeed the case.

For the first two experiments, where $\gamma=0.1$, we immediately recognize the maximum likelihood estimate is severely biased and far from the true parameter value. Especially the inclusion of a zero component in the true parameter seems to pose a major problem since this component is affected by the most bias. On the other hand, the indirect estimator is still very close to the true parameter value in all components including the zero component. Increasing $\xi$ to $10$ while keeping $\gamma = 0.1$ results in a very similar performance of the indirect estimator.
 The increase of $\gamma$ from $0.1$ to $1/6$ also affects the performance of the indirect estimator. For all components of $\vt_0$ the absolute bias and the variance of the indirect estimator increase. However, the loss in quality of the indirect estimator is manageable and the calculated estimates still resemble the true parameter. This means that even at the breakdown point of $1/6$, the performance of the indirect estimator is satisfying, although of course not as good as for lower contamination probabilities.

The situation is vastly different in the experiment with $\gamma=0.25$ where $\gamma$ is above the breakdown point.  Here, we see not surprisingly that the indirect estimator, too, gives estimates which are severely biased and quite far away from the true parameters.  We also observe that the numerical procedure used to obtain the parameter estimates  quite often fails to deliver a result.  The ratio of successful to unsuccessful experiments is roughly equal to 1:2, i.e., the algorithm failed about twice as often as it succeeded. In this sense, we can say that the estimator has broken down: for a given outlier--contaminated sample, it either does not return an admissible estimate at all, or, if it does, the estimate is far away from the true parameter. The latter statement is also evident from the fact that the variances of the indirect estimates are far smaller in this case than in other experiments which intuitively means that the algorithm typically returns very similar bad estimates if it returns a result at all.

\section{Conclusion}    \label{sec:Conclusion}
In this paper we presented an indirect estimation procedure for the parameters of a discretely observed CARMA process by estimating the parameters
of its auxiliary AR$(r)$ representation using a GM-estimator. Since there does not exist an explicit form of the map between the AR parameters and the CARMA parameters, an
additional simulation step to get back from the AR parameters to the CARMA parameters was necessary. Sufficient conditions were given such that
the indirect estimator is consistent and asymptotically normally distributed, on the one hand, in a general context, but on the other hand, as well for the special case
 where $\widehat\pi_n=\widehat\pi^{\text{GM}}_n(\vt_0)$ and $\widehat\pi_n^{\text{S}}(\vt)=\widehat\pi^{\text{LS}}_n(\vt)$. Moreover, the indirect estimator
  satisfies different robustness properties as
 weakly resistant, $\pi_{d_n}$-robustness and it has a bounded influence functional.

Summarizing the simulation studies, the indirect estimator performs convincingly for various orders $p$ and $q$ of the CARMA process, for different driving L\'{e}vy processes and for a variety of outlier configurations. The QMLE failed in some simulations but the indirect estimator could be used always. In contrast
to the QMLE, the indirect estimator is robust against outliers where the QMLE is severely biased. For uncontaminated CARMA processes the indirect estimator
is less biased for large $n$ where for small $n$ it is opposite. But in this situation both estimators work quite well. Obviously the bias in the indirect estimation procedure can be decreased by using in the
estimation and in the simulation part the same type of estimator because then the bias from the estimation part
and the simulation part are cancelling out (cf. \citet{GourierouxRenaultTouzi,GouPhillipsYug}). But proving the asymptotic normality of the indirect estimator
using as well in the simulation part the GM estimator is involved and topic of some future research.

%As soon as outliers are present the use of the indirect estimator instead of the QMLE seems advisable, since there was no situation in which the performance of the QMLE came %close to being satisfying in contrast to the indirect estimator.
%Of course, it is clear that the indirect estimator has its bounds as well.
%
%We especially saw that both an increase of $\gamma$, the proportion of outliers, and the size of the outliers affect the performance.
%The computation time of the indirect estimator is much higher than the computation time of the QMLE.
%Increasing $\gamma$ too far eventually causes the estimator to break down because we get above the breakdown point. Unfortunately, the breakdown point $1/(r+1)$ is very low if %$r$ is large, which is, however, necessary if the order $p$ of the CARMA$(p,q)$ process is large.

%A possible alternative would be instead of estimating the AR$(r)$ parameters of the discretely observed CARMA process with the GM-estimator
%to estimate the weak ARMA$(p,p-1)$ parameters of the discretely observed CARMA process with the BMM-estimator of \citet{mulerpenayohai}
%which has the largest possible breakdown point of $1/2$. However, a problem is that the results of \citet{mulerpenayohai} are based
%on ARMA processes with i.i.d. noise where the discretely sampled CARMA process admits only an ARMA representation with uncorrelated white noise.
%Therefore this extension is not as obvious and is topic of some future research.

\section{Proofs} \label{sec:Proofs}

\subsection{Proofs of Section~\ref{sec:Preliminaries}}

\begin{proof}[Proof of \Cref{auxiliaryprop}]
First, we need to show that for any $r \in \N$ the covariance matrix of \linebreak  $(Y_h(\vt), \ldots, Y_{(r+1)h}(\vt))$ is non--singular. To see this, note that the autocovariance function of \linebreak
$(Y_{mh}(\vt))_{m \in \Z}$ is
$\gamma_\vt(mh) = c_\vt^T \e^{A_\vt hm} \Sigma_\vt c_\vt$, $m \in \N_0$ (see \eqref{ACF}).
Since  $\Sigma_\vt$ is non--singular \linebreak (cf. \citet[Corollary 3.9]{schlemmstelzer})  and $c_\vt\not=0_p$  we have that $\gamma_{\vt}(0) > 0$. Moreover, the eigenvalues of $A_\vt$ have strictly negative real parts by \ref{as_D3} and therefore, $\gamma_{\vt}(mh) \to 0$ as $m \to \infty$ holds. By \citet[Proposition 5.1.1]{brockwell}, it  follows that the covariance matrix of $(Y_{h}(\vt), \ldots, Y_{(r+1)h}(\vt))$ is non--singular for every $r \in \N$.
Thus, a conclusion of \citet[\S 8.1]{brockwell} is that there exist unique  $\pi_{ 1}(\vt), \ldots, \pi_{r}(\vt), \sigma^2(\vt)$ which solve the set of $r+1$ Yule--Walker equations, namely
\begin{subequations} \label{pi}
\begin{eqnarray} \label{2.6a}
    \hspace*{-1.5cm}\pi^*(\vt):=
    \begin{pmatrix}
        \pi_1(\vt)\\
        \vdots\\
        \pi_r(\vt)
    \end{pmatrix}
    \hspace*{-0.3cm}&=&\hspace*{-0.3cm}\begin{pmatrix}
        \gamma_\vt(0) & \gamma_\vt(h) & \cdots & \gamma_\vt((r-1)h)\\
        \gamma_\vt(h) & \gamma_\vt(0) & \cdots & \gamma_\vt((r-2)h)\\
        \vdots & \vdots  & & \vdots \\
        \gamma_\vt((r-1)h) &\gamma_\vt((r-2)h) & \cdots & \gamma_\vt(0)
    \end{pmatrix}^{-1}
    \begin{pmatrix}
        \gamma_\vt(h)\\
        \vdots\\
        \gamma_\vt(rh)
    \end{pmatrix} \nonumber \\
    &=:&\Gamma^{(r-1)}(\vt)^{-1}\gamma^{(r-1)}(\vt),\\
     \sigma^2(\vt)&=&\gamma_\vt(0)-\pi^*(\vt)^T\gamma^{(r-1)}(\vt). %(1\quad -  \pi^*(\vt)^{T} ) \Gamma^{*,r}(\vt) \begin{pmatrix} 1 \\ -\pi^*(\vt) \end{pmatrix}.
\end{eqnarray}
\end{subequations}
\end{proof}

\begin{proof}[Proof of \Cref{defbindfct}]
We make use of the fact that the discretely observed stationary CARMA$(p,q(\vt))$ process $(Y_{mh}(\vt))_{m \in \Z}$ admits a representation as a stationary ARMA$(p,p-1)$ process with weak white noise as is given in \eqref{chap8:ARMAP}.
%of the form
%\begin{equation}\label{chap8:ARMAP!}
%    \phi(B) Y_{mh}(\vt) = \theta(B) \epsilon_{m}(\vt),
%\end{equation}
%where  $\phi(z) = \prod_{i=1}^{p} (1 - \e^{h \lambda_i}z)$ (the $\lambda_i$ being the eigenvalues of $A_\vt$), $\theta(z)$ is a monic, Schur--stable polynomial
%and $(\varepsilon_m(\vt))_{m\in\Z}$  is a weak white noise (see \citet[Lemma 2.1]{brockwelllindner}).
Then we can decompose the map $\pi: \Theta \to \Pi$ into three separate maps for which we define the following spaces:
\begin{align*}
\MM := \{ &(\phi_1, \ldots, \phi_p, \theta_1, \ldots, \theta_{p-1}, \sigma) \in \R^{2p}: \text{  The coefficients define a weak ARMA($p$, $p-1$)} \\
&  \text{model as in \eqref{ARMA}  for which $\phi(z)$ and $\theta(z)$ have no common zeros}\} \subseteq \R^{2p}, \\
 \GG := \{ &\gamma = (\gamma_0, \ldots, \gamma_r) \in \R^{r+1}: \text{  The coefficients define the autocovariances up to order}  \\
&\text{ $r$ of a stationary stochastic process where $\Gamma^{(r-1)}$ is non-singular} \} \subseteq \R^{r+1},\\
    \Pi:=\{&(\pi_1,\ldots,\pi_r,\sigma)\in\R^r\times(0,\infty): \text { $(\pi_1,\ldots,\pi_r)$ are the coefficients of a stationary }\\
    & \text{AR$(r)$ process and $\sigma^2$ is the variance of the noise}\}\subseteq \R^{r+1},
\end{align*}
where $\Gamma^{(r-1)}$ is defined as $\Gamma^{(r-1)}(\vt)$ in \eqref{2.6a}.
Denote by $\pi_1: \Theta \to \MM$ the map which maps the parameters of a CARMA process to the coefficients of the weak ARMA($p, p-1$) representation of its sampled version as in \eqref{chap8:ARMAP}. Denote by $\pi_2: \MM \to \GG$ the map which maps the parameters of a weak ARMA($p,p-1$) process to its autocovariances of lags $0, \ldots, r$. Lastly, denote by $\pi_3: \GG \to \Pi$ the map which maps a vector of autocovariances $(\gamma_0,\ldots,\gamma_r)$ to the parameters of the auxiliary AR($r$) model. Then we have that $\pi = \pi_3 \circ \pi_2 \circ \pi_1$.
We will show that $\pi_i$ is injective for $i=1,2,3$ and receive from this the injectivity of $\pi$. The three-times continuous-differentiability of the map $\pi$ follows from the representation \eqref{pi} and the  three-times continuous-differentiability
of the autocovariance function $\gamma_\vt$.

\textbf{Step 1:} $\pi_1$ is injective.\\
Due to \autoref{as_D} and \citet[Theorem 3.13]{schlemmstelzer} the family of sampled processes $\{(Y_{mh}(\vt)_{m\in\Z}:\vt\in\Theta)\}$ is identifiable from
their spectral densities and hence, for any $\vt\not=\vt'\in\Theta$ the parameters of the weak ARMA process
in \eqref{chap8:ARMAP} differ.

\textbf{Step 2:} $\pi_2$ is injective if $r\geq 2p-1$.\\
The reason is that by the method of \citet[p. 93]{brockwell}, the autocovariances  of ARMA($p,p-1$) processes are completely determined as solutions of difference equations with $p$ boundary conditions which depend on the coefficient vector $(\phi_1, \ldots, \phi_p, \theta_1, \ldots, \theta_{p-1}, \sigma)$.  If $r \geq 2p-1$, % Hence, viewing the first $r$ autocovarainces as solution of those recursive equations where
the number of equations $r$ is greater than or equal to the number of variables $2p-1$ which results in the injectivity of $\pi_2$
(see also \citet[Section 4.1]{delunagenton}). To be more precise, let $\mathbold{\theta}=(\phi_1, \ldots, \phi_p, \theta_1, \ldots, \theta_{p-1}, \sigma)\in\MM$ and $\mathbold{\wt\theta}=(\wt\phi_1, \ldots, \wt\phi_p, \wt\theta_1, \ldots, \wt\theta_{p-1}, \wt\sigma)\in \MM$.

\underline{Case 1.} $(\phi_1, \ldots, \phi_p)\not=(\wt\phi_1, \ldots, \wt\phi_p)$. Define $\Gamma^{(p)}(\boldmath{\theta})\in\R^{(p+1)\times(p+1)}$ similarly
to $\Gamma^{(p-1)}(\vt)$ in \eqref{2.6a}. Due to \citet[(3.3.9)]{brockwell}
\begin{eqnarray*}
    ( -\phi_p \quad\ldots \quad -\phi_1\quad 1)\Gamma^{(p)}(\boldmath{\theta})&=&(0\quad 0\quad \cdots \quad 0),\\
    ( -\wt \phi_p \quad\ldots \quad -\wt\phi_1\quad 1)\Gamma^{(p)}(\boldmath{\wt\theta})&=&(0\quad 0\quad \cdots \quad 0).
\end{eqnarray*}
But since the vectors $( -\phi_p \quad\ldots \quad -\phi_1\quad 1)$ and $( -\wt \phi_p \quad\ldots \quad -\wt\phi_1\quad 1)$ are linear independent this is only possible if
$\Gamma^{(p)}(\boldmath{\theta})\not=\Gamma^{(p)}(\boldmath{\wt\theta})$ which implies
$\pi_2(\boldmath{\theta})\not=\pi_2(\boldmath{\wt\theta})$.

\underline{Case 2.} $(\phi_1, \ldots, \phi_p)=(\wt\phi_1, \ldots, \wt\phi_p)$.  Assume that
$\pi_2(\boldmath{\theta})=\pi_2(\boldmath{\wt\theta})$. But then due to
\cite[(3.3.9)]{brockwell}, $(\gamma_{\boldmath{\theta}}(k))_{k\in\N_0}=(\gamma_{\boldmath{\wt\theta}}(k))_{k\in\N_0}$
and hence, $\boldmath{\theta}=\boldmath{\wt\theta}$.

\textbf{Step 3:} $\pi_3$ is injective.\\
We can also rewrite the linear equations \eqref{pi} as a linear system with $(r+1)$ equations and the $(r+1)$ unknown variables $\gamma_0,\ldots,\gamma_r$
which gives the injectivity of $\pi_3$.
%fact that each of the maps $\pi_1$, $\pi_2$ and $\pi_3$ is differentiable. For $\pi_1$, this follows from Assumption \ref{as_D4} and the fact that the coefficients of the weak ARMA representation are determined by the CARMA parameters. Notably, the AR parameters are determined by the eigenvalues of the zeros $a_\vt$ and the MA parameters can be obtained from the CARMA parameters as in the proof of \citet[Proposition 3.2.1]{brockwell}, see also \citet[Proposition 3]{brockwelldavisyang}. From the algorithmic description of $\pi_2$ given by \citet[p. 93]{brockwell}, one sees immediately that this map is differentiable, too. The differentiability of $\pi_3$ follows from the fact that it is defined by the Yule--Walker equations, which are differentiable with respect to $\gamma$ by construction.
\end{proof}

\subsection{Proofs of Section~\ref{sec:indirect estimation}}

\begin{proof}[Proof of \Cref{chap8:IndEstThm}] $\mbox{}$\\
(a) \,
We first start by proving the consistency.
With the definition of $\mathscr{Q}_{\text{Ind}}$ we obtain
\begin{eqnarray*}
\lefteqn{\sup_{\vt \in \Theta} | \LL_{\text{Ind}}(\vt, \mathcal{Y}^n) - \mathscr{Q}_{\text{Ind}}(\vt)|} \\
&=&\sup_{\vt \in \Theta} | [\widehat{\pi}_n - \wh{\pi}_{sn}^{\text{S}}(\vt)]^T \Omega [\widehat{\pi}_n - \wh{\pi}_{sn}^{\text{S}}(\vt)] -
    [\pi(\vt) - \pi(\vt_0)]^T \Omega [\pi(\vt) - \pi(\vt_0)]| \\
&\leq &| \widehat{\pi}_n^T \Omega \widehat{\pi}_n - \pi(\vt_0)^T \Omega \pi(\vt_0) | + \sup_{\vt \in \Theta} | \wh{\pi}_{sn}^{\text{S}}(\vt)^T \Omega
\widehat{\pi}_n - \pi(\vt)^T \Omega \pi(\vt_0) | \\
&&+
\sup_{\vt \in \Theta} |  \widehat{\pi}_n^T \Omega \wh{\pi}_{sn}^{\text{S}}(\vt)  - \pi(\vt_0)^T \Omega \pi(\vt)  |
+ \sup_{\vt \in \Theta} | \wh{\pi}_{sn}^{\text{S}}(\vt)^T \Omega \wh{\pi}_{sn}^{\text{S}}(\vt)  - \pi(\vt)^T \Omega \pi(\vt)  |.
\end{eqnarray*}
The four summands on the right--hand side  converge in probability to zero as $n \to \infty$. For the first one, this is a consequence of \ref{chap8:UnifConv1}. For the remaining three terms, the arguments are similar, so that we treat only the second one exemplary.  We have
\begin{eqnarray*}
\lefteqn{\sup_{\vt \in \Theta} | \wh{\pi}_{sn}^{\text{S}}(\vt)^T \Omega \widehat{\pi}_n - \pi(\vt)^T \Omega \pi(\vt_0) |} \\
%&=& \sup_{\vt \in \Theta} | (\wh{\pi}_n^{\text{S}}(\vt))^T \Omega \widehat{\pi}^n - \pi(\vt)^T \Omega \wh \pi_n+ \pi(\vt)^T \Omega \wh \pi_n- \pi(\vt)^T \Omega \pi(\vt_0) | \\
&\leq& \sup_{\vt \in \Theta} | \wh{\pi}_{sn}^{\text{S}}(\vt)^T \Omega \widehat{\pi}_n - \pi(\vt)^T \Omega \wh \pi_n| + \sup_{\vt \in \Theta} |\pi(\vt)^T \Omega \wh \pi_n- \pi(\vt)^T \Omega \pi(\vt_0)|\\
%&=& \sup_{\vt \in \Theta} | \langle  \Omega^{\frac12} ( \wh{\pi}_n^{\text{S}}(\vt) - \pi(\vt)), \Omega^{\frac12} \widehat{\pi}^n \rangle | + \sup_{\vt \in \Theta} | \langle \Omega^{\frac12} \pi(\vt), \Omega^{\frac12} (\wh \pi_n- \pi(\vt_0)) \rangle | \\
&\leq &\|\Omega\|\sup_{\vt \in \Theta} \| \wh{\pi}_{sn}^{\text{S}}(\vt) - \pi(\vt) \| \| \widehat{\pi}_n  \| + \|\Omega\|\sup_{\vt \in \Theta} \|  \pi(\vt) \| \| \wh \pi_n- \pi(\vt_0) \|
\stoch 0.
\end{eqnarray*}
Here, we used  the fact that $ \sup_{\vt \in \Theta} \| \pi(\vt) \|$ is finite due to the continuity of the map $\pi$ and the compactness of $\Theta$ as well as both \ref{chap8:UnifConv1} and \ref{chap8:UnifConv}.
Therefore, the function $\LL_{\text{Ind}}(\vt, \mathcal{Y}^n)$
converges uniformly in $\vt$ in probability to the limiting function  $\mathscr{Q}_{\text{Ind}}(\vt)$.
Per construction, $\wh{\vt}_n^{\text{Ind}}$ minimizes $\LL_{\text{Ind}}(\vt, \mathcal{Y}^n)$ and $\mathscr{Q}_{\text{Ind}}(\vt)$ has a unique minimum at $\vt = \vt_0$. Therefore, weak consistency of $\wh{\vt}_n^{\text{Ind}}$ follows by arguing  as in the proof of \citet[Theorem 2.4]{schlemmstelzer}; although in their proof convergence in probability is replaced by almost sure convergence, this doesn't matter  because we can use the subsequence criterion which says that a   sequence converges in probability
iff any subsequence has a further subsequence which converges almost surely.

The proof of strong consistency goes
similarly by replacing convergence in probability by almost sure convergence.

(b) \,
For the asymptotic normality, note that
$$\sqrt{n} (\widehat{\pi}_n - \wh{\pi}_{sn}^{\text{S}}(\vt_0)) = \sqrt{n} (\widehat{\pi}_n - \pi(\vt_0)) + \sqrt{n} ( \pi(\vt_0) - \wh{\pi}_{sn}^{\text{S}}(\vt_0)).$$
Since both estimators are independent from each other, we obtain with \ref{ThmIndEstAssumption1} and \ref{ThmIndEstAssumption2} that
\begin{equation}\label{chap8:EqInProof}
\sqrt{n} (\widehat{\pi}_n - \wh{\pi}_{sn}^{\text{S}}(\vt_0)) \stackrel{\DD}{\longrightarrow} \NN \left(0, \Xi_{\text{D}}(\vt_0) + \frac1s \Xi_{S}(\vt_0) \right).
\end{equation}
Moreover,
$$0 =  \nabla_\vt \LL_{\text{Ind}} ( \vt, \mathcal{Y}^n ) \big|_{\vt= \wh{\vt}_n^{\text{Ind}}} =  2[\nabla_\vt \wh{\pi}_{sn}^{\text{S}}( \wh{\vt}_n^{\text{Ind}})]^T \Omega [\wh{\pi}_{sn}^{\text{S}}( \wh{\vt}_n^{\text{Ind}})-\widehat{\pi}_n].$$
We now use a Taylor expansion of order $1$ around the true value $\vt_0$ to obtain
\begin{eqnarray*}
0 &=& \sqrt{n}  \nabla_\vt \LL_{\text{Ind}} ( \wh{\vt}^n_{\text{Ind}}, \mathcal{Y}^n ) \\
  &=& \sqrt{n}  \nabla_\vt \LL_{\text{Ind}} ( \vt_0, \mathcal{Y}^n ) + \sqrt{n} \nabla^2_\vt \LL_{\text{Ind}} ( \overline{\vt}_n , \mathcal{Y}^n )(\wh{\vt}_n^{\text{Ind}} - \vt_0) \\
  &=&2[\nabla_\vt \wh{\pi}_{sn}^{\text{S}}( \vt_0)]^T \Omega \sqrt{n} [\wh{\pi}_{sn}^{\text{S}}( \vt_0 )-\widehat{\pi}_n ]
       + 2[\nabla_\vt^2 \wh{\pi}_{sn}^{\text{S}}(\overline{\vt}_n)]^T \Omega [\wh{\pi}_{sn}^{\text{S}}(\overline{\vt}_n )-\widehat{\pi}_n ] \sqrt{n} (\wh{\vt}_n^{\text{Ind}} - \vt_0)\\
      &&\quad + 2[\nabla_\vt \wh{\pi}_{sn}^{\text{S}}( \overline{\vt}_n)]^T \Omega[\nabla_\vt \wh{\pi}_{sn}^{\text{S}}( \overline{\vt}_n)] \sqrt{n} (\wh{\vt}_n^{\text{Ind}} - \vt_0).
\end{eqnarray*}
Here, $\overline{\vt}_n$ is such that $\| \overline{\vt}_n - \vt_0 \| \leq \| \wh{\vt}_n^{\text{Ind}} - \vt_0 \|$ and hence,  $\overline{\vt}_n \stoch \vt_0$ as $n \to \infty$.
Moreover,
\begin{eqnarray} \label{3.3}
    [\nabla_\vt^2 \wh{\pi}_{sn}^{\text{S}}(\overline{\vt}^n)]^T \Omega [\wh{\pi}_{sn}^{\text{S}}(\overline{\vt}_n )-\widehat{\pi}_n ]+
[\nabla_\vt \wh{\pi}_{sn}^{\text{S}}( \overline{\vt}_n)]^T \Omega[\nabla_\vt \wh{\pi}_{sn}^{\text{S}}( \overline{\vt}_n)]\stoch [\nabla_\vt \pi(\vt_0)]^T \Omega [\nabla_\vt \pi(\vt_0)]
\end{eqnarray}
due to \ref{chap8:UnifConv1}, \ref{chap8:UnifConv}, \ref{ThmIndEstAssumption3} and  the continuity of $\pi(\vt)$. Furthermore, the right-hand side is non-singular since $\nabla_\vt \pi(\vt_0)$ has full column rank and $\Omega$ is non-singular.
Finally, we write
\begin{align*}
&\sqrt{n} (\wh{\vt}_n^{\text{Ind}} - \vt_0)\\
 &= \left([\nabla_\vt^2 \wh{\pi}_{sn}^{\text{S}}(\overline{\vt}^n)]^T \Omega [\wh{\pi}_{sn}^{\text{S}}(\overline{\vt}_n )-\widehat{\pi}_n ]+
[\nabla_\vt \wh{\pi}_{sn}^{\text{S}}( \overline{\vt}_n)]^T \Omega[\nabla_\vt \wh{\pi}_{sn}^{\text{S}}( \overline{\vt}_n)]\right)^{-1} [\nabla_\vt \wh{\pi}_{sn}^{\text{S}}( \vt_0)]^T \Omega \sqrt{n} ( \wh{\pi}_{sn}^{\text{S}}( \vt_0 )-\widehat{\pi}_n)
\end{align*}
and use \eqref{chap8:EqInProof}, \eqref{3.3} and \ref{ThmIndEstAssumption3} to  obtain as $n \to \infty$,
\begin{equation*}\label{chap8:FinalEq}
\sqrt{n} (\wh{\vt}_n^{\text{Ind}} - \vt_0) \stackrel{\DD}{\longrightarrow} \left( [\nabla_\vt \pi(\vt_0)]^T \Omega [\nabla_\vt \pi(\vt_0)] \right)^{-1}  [\nabla_\vt \pi(\vt_0)]^T \Omega \cdot \NN \left(0, \Xi_{\text{D}}(\vt_0) + \frac1s \Xi_{S}(\vt_0) \right).
\end{equation*}
This completes the proof.
\end{proof}

\subsection{Proofs of Section~\ref{sect:GM}}

\begin{proof}[Proof of \Cref{uniquenessprop}]
Using similar arguments as in \citet[Lemma 2.1]{maronnayohai} \linebreak ($\lim_{x\to 0}\chi(x)<0$, $\lim_{x\to\infty}\chi(x)=\infty$, the continuity and boundedness of $\chi$
and the Intermediate Value Theorem) we can show that for each fixed $(\pi_1, \ldots, \pi_r) \in \R^r$ there exists a unique solution $\sigma$ of the equation
$$\E \left[ \chi \left( \left( \frac{Y_{(r+1)h}(\vt) - \pi_1 Y_{rh}(\vt) - \ldots - \pi_r Y_h(\vt)}{\sigma} \right)^2 \right) \right] = 0.$$
By assumption \eqref{4.5}, the function $\chi$ is chosen in such a way that for $(\pi_{1}(\vt), \ldots, \pi_{r}(\vt))$ this unique solution is $\sigma(\vt)$. Therefore, we have that $\pi(\vt)$ is a solution of \eqref{pseudo:sigma}. Next, we show that $\pi(\vt)$ is a solution of \eqref{pseudo:pi} as well. Since the function $\phi(y,u)$ is odd in $u$ by Assumption \ref{as_H1}, it holds that
\begin{eqnarray}\label{AuxParamUnique}
%\lefteqn{\hspace*{-3cm}\E \left[ \phi \left( \begin{pmatrix} Y_{h}(\vt) \\ \vdots \\ Y_{rh}(\vt) \end{pmatrix},  \frac{ Y_{(r+1)h}(\vt) - \pi_{1}(\vt) Y_{rh}(\vt) -  \ldots - \pi_{r}(\vt) Y_h(\vt)}{\sigma(\vt)} \right) \begin{pmatrix} Y_{h}(\vt) \\ \vdots \\ Y_{rh}(\vt) \end{pmatrix} \right]} \notag \\
\E \left[ \phi \left( \begin{pmatrix} Y_{h}(\vt) \\ \vdots \\ Y_{rh}(\vt) \end{pmatrix},  \frac{ U_{r+1}(\vt)}{\sigma(\vt)} \right) \begin{pmatrix} Y_{h}(\vt) \\ \vdots \\ Y_{rh}(\vt) \end{pmatrix} \right]
&=& \E \left[ -\phi \left( \begin{pmatrix} Y_{h}(\vt) \\ \vdots \\ Y_{rh}(\vt) \end{pmatrix},  -\frac{ U_{r+1}(\vt)}{\sigma(\vt)} \right) \begin{pmatrix} Y_{h}(\vt) \\ \vdots \\ Y_{rh}(\vt) \end{pmatrix} \right] \notag \\
&&\hspace*{-2cm}= -\E \left[ \phi \left( \begin{pmatrix} Y_{h}(\vt) \\ \vdots \\ Y_{rh}(\vt) \end{pmatrix},  \frac{ U_{r+1}(\vt)}{\sigma(\vt)} \right) \begin{pmatrix} Y_{h}(\vt) \\ \vdots \\ Y_{rh}(\vt) \end{pmatrix} \right],
\end{eqnarray}
where the last equality follows from \eqref{sym}. From this equation we can conclude that
$$\E \left[ \phi \left( \begin{pmatrix} Y_{h}(\vt) \\ \vdots \\ Y_{rh}(\vt) \end{pmatrix},  \frac{ U_{r+1}(\vt)}{\sigma(\vt)} \right) \begin{pmatrix} Y_{h}(\vt) \\ \vdots \\ Y_{rh}(\vt) \end{pmatrix} \right] = 0,$$
and  therefore, $\pi(\vt)$ is a solution of  equation \eqref{pseudo:pi}.

Next, we show similarly to \citet[Theorem 2.2(a)]{maronnayohai} for regression models that $\pi(\vt)$  is
the unique solution. Assume that another solution $\pi'=(\pi'_1, \ldots, \pi'_r, \sigma')$ of \eqref{pseudo}
exists. But then $(\pi'_1, \ldots, \pi'_r)\not=(\pi_1, \ldots, \pi_r)$. Note that the arguments in the derivation of \eqref{AuxParamUnique} still hold if we replace $\sigma(\vt)$ in the denominator of the second argument of $\phi$ by $\sigma'$. Thus, we obtain that
$$\E \left[ \phi \left( \begin{pmatrix} Y_{h}(\vt) \\ \vdots \\ Y_{rh}(\vt) \end{pmatrix},  \frac{ U_{r+1}(\vt)}{\sigma'} \right) \begin{pmatrix} Y_{h}(\vt) \\ \vdots \\ Y_{rh}(\vt) \end{pmatrix} \right] = 0,$$
and therefore
\begin{align}
&\hspace*{-1cm}\E \left[ \left[ \phi \left( \begin{pmatrix} Y_{h}(\vt) \\ \vdots \\ Y_{rh}(\vt) \end{pmatrix},  \frac{ Y_{(r+1)h}(\vt) - \pi'_1 Y_{rh}(\vt) -  \ldots - \pi'_r Y_{h}(\vt)}{\sigma'} \right)\begin{pmatrix} Y_{h}(\vt) \\ \vdots \\ Y_{rh}(\vt) \end{pmatrix} \right. \right.\notag\\
& \left. \left. - \phi \left( \begin{pmatrix} Y_{h}(\vt) \\ \vdots \\ Y_{rh}(\vt) \end{pmatrix},  \frac{ U_{r+1}(\vt)}{\sigma'} \right) \right] \begin{pmatrix} Y_{h}(\vt) \\ \vdots \\ Y_{rh}(\vt) \end{pmatrix} \right] = 0. \label{AuxParamUnique2}
\end{align}
Since $\P( (Y_{h}(\vt), \ldots, Y_{rh}(\vt)) = (0, \ldots, 0)) = 0$ and $\P( Y_{(r+1)h}(\vt) - \pi'_1 Y_{rh}(\vt) -  \ldots - \pi'_r Y_{h}(\vt)$ \linebreak $ = U_{r+1}(\vt))=0$ due to \ref{as_G4}
for $\gamma=0$
and $u\mapsto \phi(y,u)$ is strictly increasing on the interval $(-u_0, u_0)$, for every $y \in \R^r$ we have that
%\begin{eqnarray}
%1 &= &\P \left( \left( \left\{ \frac{ Y_{(r+1)h}(\vt) - \pi'_1 Y_{rh}(\vt) -  \ldots - \pi'_r Y_{h}(\vt)}{\sigma'} \geq u_0 \right\} \cap \left\{ \frac{ U_{r+1}(\vt)}{\sigma'} \geq %u_0 \right\}  \right) \right. \notag \\
%&&\left.  \hspace*{0.5cm}\dot\cup \left( \left\{ \frac{ Y_{(r+1)h}(\vt) - \pi'_1 Y_{rh}(\vt) -  \ldots - \pi'_r Y_{h}(\vt)}{\sigma'} \leq -u_0 \right\} \cap \left\{ \frac{ %U_{r+1}(\vt)}{\sigma'} \leq -u_0 \right\}  \right) \right), \notag
%\end{eqnarray}
\begin{equation}\label{AuxParamUnique3}
\left| \frac{ Y_{(r+1)h}(\vt) - \pi'_1 Y_{rh}(\vt) -  \ldots - \pi'_r Y_{h}(\vt)}{\sigma'} \right| \geq u_0 \quad \; \Pas
\end{equation}
because otherwise \eqref{AuxParamUnique2} cannot hold.
Now, $\pi'$ is by assumption also a solution of \eqref{pseudo:sigma} and hence, we have due to \eqref{AuxParamUnique3} and \ref{as_H6}
\begin{align*}
0 = \E \left[ \chi \left( \left( \frac{ Y_{(r+1)h}(\vt) - \pi'_1 Y_{rh}(\vt) -  \ldots - \pi'_r Y_{h}(\vt)}{\sigma'} \right)^2 \right) \right] \geq \chi(u^2_0) > 0
\end{align*}
which is a contradiction.
\end{proof}

\begin{proof}[Proof of \Cref{AsNorm}]
By the Cramer--Wold device, the statement of the lemma is equivalent to the assertion that $\frac{1}{\sqrt{n-r}} x^T \sum_{k=1}^{n-r} \Psi_k(\vt^\gamma)$ converges to a univariate normal distribution with mean $0$ and variance  $x^T \II_{\text{GM}}(\vt^\gamma) x$ for every $x \in \R^{r+1}$. According to \citet[Theorem 1.7]{ibragimov}, this holds if we can show that
\begin{eqnarray} \label{eq1}
    \E | x^T \Psi_k(\vt^\gamma) |^{2+\delta} < \infty
\end{eqnarray}
and that
$(x^T \Psi_k(\vt^\gamma))_{k\in \N}$ is strongly mixing with mixing coefficients $\alpha_{x^T \Psi(\vt^\gamma)}(m)$ satisfying
\begin{eqnarray}  \label{eq2}
    \sum_{m=1}^\infty \alpha_{x^T \Psi(\vt^\gamma)}^{\delta/(2+\delta)}(m) < \infty \quad \text{ for some }\delta > 0.
\end{eqnarray}
The same theorem then also states that $x^T \II_{\text{GM}}(\vt^\gamma) x < \infty$ from which we then deduce that for $i, j \in \{ 1, \ldots, r+1 \}$ the entry $[\II_{\text{GM}}(\vt^\gamma)]_{ij}$ is finite and therefore, $\II_{\text{GM}}(\vt^\gamma)$ is well--defined.

We start to show the   existence of the $(2+\delta)$--th moment of $x^T \Psi_k(\vt^\gamma)$ in \eqref{eq1}. Therefore, note that
\begin{align}\label{chap8:boundedness}
\E  |x^T \Psi_k(\vt^\gamma) |^{2+\delta}
% \leq C \E \left[ \sum_{i=1}^{r+1} | x_i \Psi_i(t) |^{2+\delta} \right]
\leq C \|x\|^{2+\delta} \sum_{i=1}^{r+1} \E  \|\Psi_{k,i}(\vt^\gamma)\|^{2+\delta}  < \infty,
\end{align}
where the last inequality holds since $\Psi_{k,i}(\vt^\gamma)$ is bounded by \ref{as_H2} and \ref{as_H6}.
% is obvious, since it is inherited from the processes $V$, $Z$ and $Y^{(h)}$.

Finally, the process $(Y_{mh}^\gamma(\vt))_{m \in \Z}$ is strongly mixing and the mixing coefficients satisfy the above condition \eqref{eq2} for the following reason.
Either we have in the case of replacement outliers that  $Y_{mh}^\gamma(\vt) = G(V_m, Z_m, Y_{mh}(\vt))$ for some measurable function $G$
and the three processes
 $(V_m)$, $(Z_m)$ and $(Y_{mh}(\vt))$ are independent, or in the case of additive outliers we have $Y_{mh}^\gamma(\vt) = G(V_m, W_m, Y_{mh}(\vt))$ for some measurable function $G$
and the three processes
 $(V_m)$, $(W_m)$ and $(Y_{mh}(\vt))$ are independent. Hence, by \citet[Theorem 6.6(II)]{bradley}, Assumption \ref{as_G2} and \citet[Proposition~3.34]{MarquardtStelzer2007} we receive
$$\alpha_{Y^\gamma(\vt)}(m) \leq \alpha_V(m) + \alpha_Z(m) + \alpha_{Y(\vt)}(m) \leq C\rho^m$$
respectively,
$\alpha_{Y^\gamma(\vt)}(m) \leq \alpha_V(m) + \alpha_W(m) + \alpha_{Y(\vt)}(m) \leq C\rho^m$
for some $C > 0$ and $\rho \in (0,1)$. Furthermore, $\Psi_k(\vt^\gamma)$ depends only on the finitely many values  $Y_{kh}^\gamma(\vt), \ldots, Y_{(k+r)h}^\gamma(\vt)$ and by \citet[Remark 1.8(b)]{bradley} this ensures that $\alpha_{\Psi(\vt^\gamma)}(m) \leq \alpha_{Y^{\gamma}(\vt)}(m+r) \leq C \rho^m$. Thus, the strong mixing coefficients $\alpha_{x^T \Psi(\vt^\gamma)}(m)$ of $x^T \Psi(\vt^\gamma)$ satisfy the summability condition \eqref{eq2} and the lemma is proven.
\end{proof}

\begin{proof}[Proof of \Cref{Lemma 4.8}]
Note, first that for $i, j=1, \ldots, r$,
\begin{eqnarray*}
\lefteqn{\sup_{ \pi \in K }\left| \frac{\partial}{\partial \pi_i} \phi \left( \begin{pmatrix}Y_h^\gamma(\vt) \\ \vdots \\ Y_{rh}^\gamma(\vt) \end{pmatrix}, \frac{Y_{(r+1)h}^\gamma(\vt) - \pi_1 Y_{rh}^\gamma(\vt) - \ldots - \pi_{r} Y_{h}^\gamma(\vt)}{ \sigma } \right) Y_{jh}^\gamma(\vt) \right|}  \\
&=&\sup_{ \pi \in K}   \left\| \left(\frac{\partial}{\partial u}\phi \right) \left( \begin{pmatrix}Y_h^\gamma(\vt) \\ \vdots \\ Y_{rh}^\gamma(\vt) \end{pmatrix}, \frac{Y_{(r+1)h}^\gamma(\vt) - \pi_1 Y_{rh}^\gamma(\vt) - \ldots - \pi_{r} Y_{h}^\gamma(\vt)}{ \sigma } \right)Y_{jh}^\gamma(\vt) \right\|  \left|\frac{Y_{(r+1-i)h}^\gamma(\vt)}{\sigma}\right| \\
%&\leq&  \sup_{ \pi \in K } C \left|  \left(\frac{\partial}{\partial u}\phi \right) \left( \begin{pmatrix}Y_h^\gamma(\vt) \\ \vdots \\ Y_{rh}^\gamma(\vt) \end{pmatrix}, \frac{Y_{(r+1)h}^\gamma(\vt) - \pi_1 Y_{rh}^\gamma(\vt) - \ldots - \pi_{r} Y_{h}^\gamma(\vt)}{ \sigma } \right) \right| \left\| \begin{pmatrix}Y_h^\gamma(\vt) \\ \vdots \\ Y_{rh}^\gamma(\vt) \end{pmatrix} \right\|^2  \\
&\leq& \sup_{ u \in \R } C  \left\| \left(\frac{\partial}{\partial u}\phi \right) \left( \begin{pmatrix}Y_h^\gamma(\vt) \\ \vdots \\ Y_{rh}^\gamma(\vt) \end{pmatrix}, u \right) \begin{pmatrix}Y_h^\gamma(\vt) \\ \vdots \\ Y_{rh}^\gamma(\vt) \end{pmatrix} \right\|  \left\| \begin{pmatrix}Y_h^\gamma(\vt) \\ \vdots \\ Y_{rh}^\gamma(\vt) \end{pmatrix} \right\|
\leq C \left\| \begin{pmatrix}Y_h^\gamma(\vt) \\ \vdots \\ Y_{rh}^\gamma(\vt) \end{pmatrix} \right\|
\end{eqnarray*}
% \begin{align*}
% &\E \left[ \left| \frac{\partial}{\partial \pi_i} \phi \left( \begin{pmatrix}Y_h^\gamma(\vt) \\ \vdots \\ Y_{rh}^\gamma(\vt) \end{pmatrix}, \frac{Y_{(r+1)h}^\gamma(\vt) - \pi_r Y_{rh}^\gamma(\vt) - \ldots - \pi_{1} Y_{lh}^\gamma(\vt)}{ \sigma } \right) \wt Y_j \right| \right] \\
% &=\frac{1}{\sigma} \E \left[ \left| \left(\frac{\partial}{\partial u}\phi \right) \left( \begin{pmatrix}Y_h^\gamma(\vt) \\ \vdots \\ Y_{rh}^\gamma(\vt) \end{pmatrix}, \frac{Y_{(r+1)h}^\gamma(\vt) - \pi_r Y_{rh}^\gamma(\vt) - \ldots - \pi_{1} Y_{lh}^\gamma(\vt)}{ \sigma } \right) \wt Y_j \wt Y_i \right| \right] \\
% &\leq \frac{C}{\sigma}\E \left[ \left|  \left(\frac{\partial}{\partial u}\phi \right) \left( \begin{pmatrix}Y_h^\gamma(\vt) \\ \vdots \\ Y_{rh}^\gamma(\vt) \end{pmatrix}, \frac{Y_{(r+1)h}^\gamma(\vt) - \pi_r Y_{rh}^\gamma(\vt) - \ldots - \pi_{1} Y_{lh}^\gamma(\vt)}{ \sigma } \right) \right| \left\| \begin{pmatrix}Y_h^\gamma(\vt) \\ \vdots \\ Y_{rh}^\gamma(\vt) \end{pmatrix} \right\|^2 \right] \\
% &\leq \frac{C}{\sigma}\E \left[ \left\| \left(\frac{\partial}{\partial u}\phi \right) \left( \begin{pmatrix}Y_h^\gamma(\vt) \\ \vdots \\ Y_{rh}^\gamma(\vt) \end{pmatrix}, \frac{Y_{(r+1)h}^\gamma(\vt) - \pi_r Y_{rh}^\gamma(\vt) - \ldots - \pi_{1} Y_{lh}^\gamma(\vt)}{ \sigma } \right) \begin{pmatrix}Y_h^\gamma(\vt) \\ \vdots \\ Y_{rh}^\gamma(\vt) \end{pmatrix} \right\|^2 \right]^\frac{1}{2} \E \left[ \left\| \begin{pmatrix}Y_h^\gamma(\vt) \\ \vdots \\ Y_{rh}^\gamma(\vt) \end{pmatrix} \right\|^2 \right]^\frac{1}{2} \\
% &< \infty
% \end{align*}
due to Assumption \ref{as_H4} and the boundedness of $1/\sigma$ on the compact set $K$.  By Assumption \ref{as_G1} and \ref{as_D2} the expectation on the right-hand side is finite. Similarly,
\begin{eqnarray*}
\lefteqn{\hspace*{-3cm}\sup_{ \pi \in K } \left\| \frac{\partial}{\partial \sigma} \phi \left( \begin{pmatrix}Y_h^\gamma(\vt) \\ \vdots \\ Y_{rh}^\gamma(\vt) \end{pmatrix}, \frac{Y_{(r+1)h}^\gamma(\vt) - \pi_1 Y_{rh}^\gamma(\vt) - \ldots - \pi_{r} Y_{h}^\gamma(\vt)}{ \sigma } \right) \begin{pmatrix}Y_h^\gamma(\vt) \\ \vdots \\ Y_{rh}^\gamma(\vt) \end{pmatrix} \right\| }\\
%&=& \sup_{ \pi \in K } \left( \frac{1}{\sigma} \left\| \frac{Y_{(r+1)h}^\gamma(\vt) - \pi_1 Y_{rh}^\gamma(\vt) - \ldots - \pi_{r} Y_{h}^\gamma(\vt)}{ \sigma }\right. \right.\\
%    &&\quad\quad\quad \left.\left. \left(\frac{\partial}{\partial u}\phi \right) \left( \begin{pmatrix}Y_h^\gamma(\vt) \\ \vdots \\ Y_{rh}^\gamma(\vt) \end{pmatrix}, \frac{Y_{(r+1)h}^\gamma(\vt) - \pi_1 Y_{rh}^\gamma(\vt) - \ldots - \pi_{r} Y_{h}^\gamma(\vt)}{ \sigma } \right) \begin{pmatrix}Y_h^\gamma(\vt) \\ \vdots \\ Y_{rh}^\gamma(\vt) \end{pmatrix} \right\| \right) \\
&&\hspace*{-3cm}\leq C \sup_{u \in \R} \left\| u \left(\frac{\partial}{\partial u}\phi \right) \left( \begin{pmatrix}Y_h^\gamma(\vt) \\ \vdots \\ Y_{rh}^\gamma(\vt) \end{pmatrix}, u \right) \begin{pmatrix}Y_h^\gamma(\vt) \\ \vdots \\ Y_{rh}^\gamma(\vt) \end{pmatrix} \right\|.
\end{eqnarray*}
The expectation on the right-hand side is finite due to Assumption \ref{as_H5}. Similar arguments, using Assumption \ref{as_H6}, also show that $\left| \frac{\partial}{\partial \pi_i} \chi \left( \left( \frac{Y_{(r+1)h}^\gamma(\vt) - \pi_1 Y_{rh}^\gamma(\vt) - \ldots - \pi_{r} Y_{h}^\gamma(\vt)}{ \sigma } \right)^2 \right) \right|$  for $i= 1, \ldots, r$ and \linebreak $ \left| \frac{\partial}{\partial \sigma} \chi \left( \left( \frac{Y_{(r+1)h}^\gamma(\vt) - \pi_1 Y_{rh}^\gamma(\vt) - \ldots - \pi_{r} Y_{h}^\gamma(\vt)}{ \sigma } \right)^2 \right) \right| $ are uniformly dominated by integrable random variables. Therefore, by
\cite[Theorem 16.8(ii)]{Billingsley1999} (that is an application of dominated convergence) $\nabla_\pi \mathscr{Q}_{\text{GM}}(\pi,\vt^\gamma)$ exists on $K$ and the order of differentiation and
expectation can be changed.

Moreover, due  \ref{as_H4}, \ref{as_H6} and  \cite[Theorem 16.8(i)]{Billingsley1999} the map $\pi\mapsto \nabla_\pi \mathscr{Q}_{\text{GM}} (\pi,\vt^\gamma)$ is continuous.
Hence, if $\pi_n\stackrel{\P}{\to}\pi^{\text{GM}}(\vt^\gamma)\in K$  then
$ \nabla_\pi \mathscr{Q}_{\text{GM}}(\pi_n,\vt^\gamma) \stackrel{\P}{\to} \nabla_\pi \mathscr{Q}_{\text{GM}}(\pi^{\text{GM}}(\vt^\gamma),\vt^\gamma)$.
\end{proof}

\begin{proof}[Proof of \Cref{Lemma 4.7}]
We use the decomposition
\begin{equation*}\label{chap8:proofNorm2}
\sqrt{n-p}  \mathscr{Q}_{\text{GM}} ( \widehat{\pi}_n^{\text{GM}}(\vt^\gamma),\vt^\gamma) =   \frac{1}{\sqrt{n-r}} \sum_{k=1}^{n-r} [ \mathscr{Q}_{\text{GM}} ( \widehat{\pi}_n^{\text{GM}}(\vt^\gamma),\vt^\gamma)+\Psi_k(\vt^\gamma)]- \frac{1}{\sqrt{n-r}} \sum_{k=1}^{n-r} \Psi_k(\vt^\gamma).
\end{equation*}
The first term is  of order $o_P(1)$ due to \cite[Lemma 3.5]{bustos} (cf. \citet[Lemma A.5]{Kimmig} in our setting). The second term
converges to $ \NN(0, \II_{\text{GM}}(\vt^\gamma))$ due to \Cref{AsNorm}. Hence, we receive the statement.
\end{proof}

\begin{proof}[Proof of \Cref{chap8:ThmNorm}]
Due to \eqref{pseudo}  we have  $\mathscr{Q}_{\text{GM}}(\pi(\vt^\gamma),\vt^\gamma) = 0$. Next,  a first-order Taylor expansion  around
$ \widehat{\pi}_n^{\text{GM}}(\vt^\gamma)$ gives
\begin{align*} %\label{chap8:proofNorm}
0 &= \sqrt{n-r} \mathscr{Q}_{\text{GM}}( \pi(\vt^\gamma),\vt^\gamma) \nonumber \\
  &= \sqrt{n-r} \mathscr{Q}_{\text{GM}}( \widehat{\pi}_n^{\text{GM}}(\vt^\gamma),\vt^\gamma) + \sqrt{n-r} \nabla_\pi \mathscr{Q}_{\text{GM}} ( \overline{\pi}_n^{\text{GM}}(\vt^\gamma),\vt^\gamma) (\pi^{\text{GM}}(\vt^\gamma) - \widehat{\pi}_n^{\text{GM}}(\vt^\gamma) ),
\end{align*}
where $\| \pi^{\text{GM}}(\vt^\gamma) - \overline{\pi}_n^{\text{GM}}(\vt^\gamma) \| \leq \| \pi^{\text{GM}}(\vt^\gamma) - \widehat{\pi}_n^{\text{GM}}(\vt^\gamma) \|$.
The statement follows then from a combination of \Cref{Lemma 4.8} and \Cref{Lemma 4.7}.
\end{proof}

\subsection{Proofs of Section~\ref{sec:indirect estimator CARMA}}

For the ease of notation we write in the following for the Lévy process $(L_t^S)_{t\in\R}$ shortly $(L_t)_{t\in\R}$
and hence, assume that $\E|L_1|^{2N^*}$ for some $2N^*>\max(N(\Theta),4+\delta)$; similarly $(Y_t^S)_{t\in\R}$
is $(Y_t)_{t\in\R}$.

\begin{lemma} \label{Lemma 5.1}
    Define for any $\vt\in\Theta$ the function $f_\vt(u)=c_\vt^{T}\e^{A_\vt u}e_p 1_{\left[0,\infty\right)}(u)$
    and
    \begin{eqnarray*}
        G_\vt(u)=\left(f_\vt(u),\nabla_\vt f_\vt(u),\nabla_\vt^2 f_\vt(u)\right).
    \end{eqnarray*}
    Then $\Pas$ we have
    \begin{eqnarray*}
        \left(Y_{mh}(\vt),\nabla_\vt Y_{mh}(\vt), \nabla_\vt^2 Y_{mh}(\vt)\right)_{m\in\Z}
        =\left(\int_{-\infty}^{mh} G_\vt(mh-u)\,d L_u\right)_{m\in\Z}
    \end{eqnarray*}
    which is strongly mixing and ergodic.
\end{lemma}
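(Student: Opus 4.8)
The plan is to establish the moving–average (kernel) representation of the state process, to differentiate it under the stochastic integral, and then to obtain mixing and ergodicity by passing to an augmented state space system driven by the same $L$. First I would use \ref{as_D3}: since the eigenvalues of $A_\vt$ have strictly negative real parts, the unique strictly stationary solution of \eqref{Equation 1.1a} is $X_t(\vt)=\int_{-\infty}^t \e^{A_\vt(t-u)}e_p\,\dif L_u$ (see \citet{MarquardtStelzer2007}), whence $Y_{mh}(\vt)=c_\vt^T X_{mh}(\vt)=\int_{-\infty}^{mh} f_\vt(mh-u)\,\dif L_u$, which is exactly the first coordinate of the asserted identity. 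The required a priori bounds come from Remark~\ref{Remark 2.1}(3), namely $\sup_{\vt\in\Theta}\|\e^{A_\vt u}\|\le C\e^{-\rho u}$, together with the variation-of-constants formula $\partial_{\vt_i}\e^{A_\vt u}=\int_0^u \e^{A_\vt(u-s)}(\partial_{\vt_i}A_\vt)\e^{A_\vt s}\,\dif s$ and its second-order analogue; combined with the smoothness \ref{as_D9}, these show that $G_\vt(u)$ and its $\vt$-derivatives are dominated, uniformly in $\vt\in\Theta$, by $C(1+u^2)\e^{-\rho' u}$ for some $\rho'\in(0,\rho)$. In particular all kernels lie in $L^2([0,\infty))$ with square-integrable envelopes, so each entry of $\int_{-\infty}^{mh} G_\vt(mh-u)\,\dif L_u$ is a well-defined $L^2(\P)$ Lévy integral.

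The interchange of $\nabla_\vt$ with the integral I would carry out in two stages. First, $L^2$-differentiability: the difference quotient $h^{-1}(Y_{mh}(\vt+he_i)-Y_{mh}(\vt))$ equals the Lévy integral of the corresponding difference quotient of $f_\vt$, and by the It\^o isometry for Lévy integrals (valid since $\E L_1=0$, $\E L_1^2=\sigma_L^2$) together with dominated convergence, using the envelope above and the mean value theorem, it converges in $L^2(\P)$ to $\int_{-\infty}^{mh}\partial_{\vt_i} f_\vt(mh-u)\,\dif L_u$; one further iteration handles the second derivative. Second, I would upgrade these $L^2$-limits to the pathwise identity holding \emph{simultaneously} in $\vt$: bounding higher moments of the $\vt$-increments of the candidate derivative integrals by a Rosenthal-type inequality for Lévy integrals --- this is where the hypothesis $\E|L_1|^{2N^*}$ with $2N^*>N(\Theta)$ enters --- and then invoking a Kolmogorov--Chentsov/Sobolev embedding argument on the $N(\Theta)$-dimensional set $\Theta$ yields a modification of $\vt\mapsto Y_{mh}(\vt)$ that is three times continuously differentiable and whose derivatives coincide with the $L^2$-limits just identified. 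This gives the representation $\Pas$

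For mixing and ergodicity I would augment the state. Differentiating \eqref{Equation 1.1a} in $\vt$ shows that $\wt X_t:=(X_t(\vt),(\partial_{\vt_i}X_t(\vt))_i,(\partial_{\vt_i}\partial_{\vt_j}X_t(\vt))_{i,j})$ solves a linear SDE $\dif\wt X_t=\wt A_\vt\wt X_t\,\dif t+\wt e\,\dif L_t$, where $\wt A_\vt$ is block lower-triangular with every diagonal block equal to $A_\vt$ and $\wt e=(e_p^T,0,\ldots,0)^T$, the noise entering only the undifferentiated block because $e_p$ does not depend on $\vt$. The spectrum of $\wt A_\vt$ equals that of $A_\vt$ and hence lies in the open left half-plane, so $\wt X$ is a strictly stationary Lévy-driven state space process with finite $(4+\delta)$-moments; by the same argument as in \citet[Proposition~3.34]{MarquardtStelzer2007} it is exponentially strongly mixing. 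Sampling at distance $h$ preserves strong mixing, and the triple $(Y_{mh}(\vt),\nabla_\vt Y_{mh}(\vt),\nabla_\vt^2 Y_{mh}(\vt))$ is a fixed linear image of $\wt X_{mh}$; since strong mixing is inherited by measurable functions of the process, the triple is exponentially strongly mixing, and a stationary strongly mixing process is ergodic.

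The main obstacle is the second stage: making the interchange of the gradient with the stochastic integral rigorous and, above all, passing from $L^2$-derivatives to pathwise derivatives valid for all $\vt$ at once. This is precisely the step that forces the moment condition $2N^*>N(\Theta)$, through Rosenthal's inequality and the Kolmogorov/Sobolev argument over the $N(\Theta)$-dimensional parameter space, whereas the remaining parts rely only on the stable-drift structure and the exponential kernel estimates.
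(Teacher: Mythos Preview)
Your proposal is correct and proceeds along lines close to the paper's, with one genuinely different ingredient. For the moving-average representation and the interchange of $\nabla_\vt$ with the stochastic integral, the paper assembles the same exponential kernel bounds you use, computes the $(2N^*)$-th moments of the increments via an explicit cumulant formula for L\'evy integrals (your ``Rosenthal-type'' step), applies Kolmogorov--Chentsov to obtain H\"older continuity of the candidate derivative processes, and then invokes a Fubini theorem for L\'evy integrals together with a Hutton--Nelson argument (integrate $\int_{-\infty}^\infty \partial_\vt g(\vt,u)\,\dif L_u$ over $\vt$, swap, and differentiate back) rather than your $L^2$-limit-of-difference-quotients route; the two are equivalent once the continuity in $\vt$ is in place.

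For mixing and ergodicity your augmented-state argument is a different (and self-contained) path. The paper does not rebuild a state space for the derivatives; instead it reads off the kernel form $\int_{-\infty}^{mh}G_\vt(mh-u)\,\dif L_u$ directly and cites a result on mixing of L\'evy-driven moving averages (\citet{fuchsstelzer}) to conclude strong mixing and hence ergodicity. Your approach has the advantage of staying within the MCARMA machinery already used in the paper (\citet[Proposition~3.34]{MarquardtStelzer2007}), at the cost of checking that the block lower-triangular system $\dif\wt X_t=\wt A_\vt\wt X_t\,\dif t+\wt e\,\dif L_t$ indeed has $\sigma(\wt A_\vt)=\sigma(A_\vt)$; the paper's route avoids this bookkeeping but requires the extra reference.
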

The proof is moved to Section~\ref{Appendix:Lemma 5.1} in the Supporting Information.

\begin{proof}[Proof of \Cref{Konv Kov}]
(a) \, First, we prove the pointwise convergence of the sample autocovariance function and second, that $\wh\gamma_{\vt,n}(l,j)$ is locally Hölder-continuous which results in a
stochastic equicontinuity condition.  Then we are able to apply
\citet[Theorem 10.2]{Pollard}  which gives the uniform convergence.\\
\textbf{Step 1. Pointwise convergence.} From \Cref{Lemma 5.1} we already know that $(Y_{mh}(\vt))_{m\in\Z}$ is a stationary and ergodic sequence with $\E|Y_{mh}(\vt)|^2<\infty$
due to $\E|L_1|^2<\infty$.
Then Birkoff's Ergodic Theorem gives as $n\to\infty$,
\begin{eqnarray*}
    \wh\gamma_{\vt,n}(l,j)\stoch \gamma_{\vt}(l-j).
\end{eqnarray*}
\textbf{Step 2. $\wh\gamma_{\vt,n}(l,j)$ is locally Hölder-continuous.}
    Let $\gamma\in \left[0,1-N(\Theta)/(2N^*)\right)$ and
    \begin{eqnarray*}
        U_k:=\sup_{0<\|\vt_1-\vt_2\|<1 \atop \vt_1,\vt_2\in\Theta}\frac{|Y_{kh}(\vt_1)-Y_{kh}(\vt_2)|}{\|\vt_1-\vt_2\|^\gamma}.
    \end{eqnarray*}
    Since $((Y_{mh}(\vt))_{\vt\in\Theta})_{m\in\Z}$ is a stationary sequence, $U_k\stackrel{d}{=} U_1$ and due to \Cref{Lemma A.3} in the Supporting Information, $\E U_1^{2N^*}<\infty$.
    In particular, for any $\vt_1,\vt_2\in\Theta$ with $\|\vt_1-\vt_2\|<1$ the upper bound
    \begin{eqnarray*}
        |Y_{kh}(\vt_1)-Y_{kh}(\vt_2)|\leq U_k\|\vt_1-\vt_2\|^\gamma
    \end{eqnarray*}
    and hence,
    \begin{eqnarray*}
        \lefteqn{|Y_{(k+l)h}(\vt_1)Y_{(k+j)h}(\vt_1)-Y_{(k+l)h}(\vt_2)Y_{(k+j)h}(\vt_2)|}\\
        &&\leq \underbrace{\left(\sup_{\vt\in\Theta}|Y_{(k+l)h}(\vt)|+\sup_{\vt\in\Theta}|Y_{(k+j)h}(\vt)|\right)(U_{k+l}+U_{k+j})}_{=:U_{k+l,k+j}^*}\|\vt_1-\vt_2\|^\gamma
    \end{eqnarray*}
    hold.
    Finally,
    \begin{eqnarray} \label{A.6}
        |\widehat\gamma_{\vt_1,n}(l,j)-\widehat\gamma_{\vt_2,n}(l,j)|\leq \frac{1}{n-r}\sum_{k=1}^{n-r} U_{k+l,k+j}^*\|\vt_1-\vt_2\|^\gamma \quad \text{ for }\|\vt_1-\vt_2\|<1
    \end{eqnarray}
    with
    \begin{eqnarray*}
        \E (U_{k+l,k+j}^*)=  \E (U_{1+l,1+j}^*)\leq 4\left(\E\left(\sup_{\vt\in\Theta}Y_h(\vt)^2\right)\E U_{1}^2\right)^{1/2}<\infty
    \end{eqnarray*}
    where we used \Cref{Lemma A.3} in the Supporting Information to get the finite expectation.\\
\textbf{Step 3.}
    Let $\epsilon,\eta>0$. Take $0<\delta<\min\{1,\eta\epsilon/\E(U_{1+l,1+j}^*)\}^{1/\gamma}$. Then \eqref{A.6} and Markov's inequality give
    \begin{eqnarray*}
        \P\left(\sup_{0<\|\vt_1-\vt_2\|<\delta \atop \vt_1,\vt_2\in\Theta} |\widehat\gamma_{\vt_1,n}(l,j)-\widehat\gamma_{\vt_2,n}(l,j)|>\eta\right)
            \leq \E(U_{1+l,1+j}^*)\frac{\delta^\gamma}{\eta}<\epsilon.
    \end{eqnarray*}
    A conclusion of this stochastic equicontinuity condition, the pointwise convergence in Step 1  and \citet[Theorem 10.2]{Pollard}
    is the uniform convergence.

    The proof of (b,c) goes in the same vein as the proof of (a).
\end{proof}

\begin{proof}[Proof of \Cref{proposition 5.2}]
Define
\begin{eqnarray*}
    \wh\gamma_n^{(r-1)}(\vt)=\begin{pmatrix}\wh\gamma_{\vt,n}(r,r-1) \\ \vdots \\\wh\gamma_{\vt,n}(r,0) \end{pmatrix}
    \quad
    \mbox{ and } \quad
    \wh \Gamma_n^{(r-1)}(\vt)=\begin{pmatrix}\wh\gamma_{\vt,n}(r-1,r-1) & \cdots &\wh\gamma_{\vt,n}(0,r-1)\\
        \vdots & & \vdots\\
        \wh\gamma_{\vt,n}(r-1,0) & \cdots &\wh\gamma_{\vt,n}(0,0)
     \end{pmatrix}.
\end{eqnarray*}
Then
\begin{eqnarray}    \label{5.1}
\begin{split}
% \nonumber % Remove numbering (before each equation)
  \wh\pi_n^*(\vt) :=&\begin{pmatrix} \wh{\pi}_{n,1}^{\text{LS}}(\vt)\\ \vdots \\\wh{\pi}_{n,r}^{\text{LS}}(\vt) \end{pmatrix}
    =  [ \wh \Gamma_n^{(r-1)}(\vt)]^{-1}\wh\gamma_n^{(r-1)}(\vt),\\
    \sigma^2_{\text{LS},n}(\vt)=&\wh\gamma_{\vt,n}(r,r)- [\wh\pi_n^*(\vt)]^{T}\wh\gamma_n^{(r-1)}(\vt). %(1\quad -  \wh\pi_n^*(\vt)^{T} )\wh \Gamma_n^{(r)}(\vt) \begin{pmatrix} 1 \\ -\wh\pi_n^*(\vt) \end{pmatrix}.
\end{split}
\end{eqnarray}
%and
%\begin{eqnarray} \label{5.2}
%\begin{split}
%  \pi^*(\vt) &:=&\begin{pmatrix} \pi_{1}(\vt)\\ \vdots \\ \pi_{r}(\vt) \end{pmatrix}
%    =   [\Gamma^{*,r-1}(\vt)]^{-1}\gamma^{*,r-1}(\vt),\\
%    \sigma^2(\vt)&=& (1\quad -  \pi^*(\vt)^{T} ) \Gamma^{*,r}(\vt) \begin{pmatrix} 1 \\ -\pi^*(\vt) \end{pmatrix}.
%\end{split}
%\end{eqnarray}
A conclusion of \Cref{Konv Kov}(a) and the definition of $\Gamma^{(r-1)}(\vt)$ and $\gamma^{(r-1)}(\vt)$ in \eqref{pi}  is that
\begin{eqnarray} \label{5.3}
    \sup_{\vt\in\Theta}\|\wh \Gamma_n^{(r-1)}(\vt)-\Gamma^{(r-1)}(\vt)\|\stoch 0 \quad \text{ and } \quad
    \sup_{\vt\in\Theta}\|\wh\gamma_n^{(r-1)}(\vt)-\gamma^{(r-1)}(\vt)\|\stoch 0.
\end{eqnarray}
Due to the continuity and the positive definiteness of $\Gamma^{(r-1)}(\vt)$ (cf. proof of \Cref{auxiliaryprop}), and the compactness of
$\Theta$ we receive $\sup_{\vt\in\Theta}\|\Gamma^{(r-1)}(\vt)^{-1}\|<\infty$. Hence, statement (a) is a consequence
of \eqref{5.1}-\eqref{5.3} and \eqref{pi}.

(b) Note that
\begin{eqnarray} \label{5.4}
%\begin{split}
     \textstyle  \frac{\partial}{\partial\vt_i}\wh{\pi}_n^{*}(\vt)\hspace*{-0.3cm} &=&\hspace*{-0.3cm} \textstyle -[ \wh \Gamma_n^{(r-1)}(\vt)]^{-1}\left[\frac{\partial}{\partial\vt_i}\wh \Gamma_n^{(r-1)}(\vt)\right][ \wh \Gamma_n^{(r-1)}(\vt)]^{-1}\wh\gamma_n^{(r-1)}(\vt)+[ \wh \Gamma_n^{(r-1)}(\vt)]^{-1}\left[\frac{\partial}{\partial\vt_i}\wh\gamma_n^{(r-1)}(\vt)\right],\notag\\
     \textstyle  \frac{\partial}{\partial\vt_i}{\pi}^{*}(\vt)\hspace*{-0.3cm} &=&\hspace*{-0.3cm} \textstyle -[ \Gamma^{(r-1)}(\vt)]^{-1}\left[\frac{\partial}{\partial\vt_i}\Gamma^{(r-1)}(\vt)\right][\Gamma^{(r-1)}(\vt)]^{-1}\gamma^{(r-1)}(\vt)+[ \Gamma^{(r-1)}(\vt)]^{-1}\left[\frac{\partial}{\partial\vt_i}\gamma^{(r-1)}(\vt)\right]. \notag\\
%\end{split}
\end{eqnarray}
Applying \Cref{Konv Kov}(b) we receive that
\begin{eqnarray} \label{5.5}
\begin{split}
    \sup_{\vt\in\Theta}\left\|\frac{\partial}{\partial\vt_i}\wh \Gamma_n^{(r-1)}(\vt)-\frac{\partial}{\partial\vt_i}\Gamma^{(r-1)}(\vt)\right\|\stoch 0 \text{ and }
    \sup_{\vt\in\Theta}\left\|\frac{\partial}{\partial\vt_i}\wh\gamma_n^{(r-1)}(\vt)-\frac{\partial}{\partial\vt_i}\gamma^{(r-1)}(\vt)\right\|\stoch 0.
\end{split}
\end{eqnarray}
Then the same arguments as in (a) and \eqref{5.3}-\eqref{5.5} lead to statement (b).

(c) \, The proof goes in analog lines as in (a) and (b).
\end{proof}

\begin{proof}[Proof of \Cref{Corollary 5.4}]
  (a) We use the upper bound
    \begin{eqnarray*}
        \|\wh{\pi}_n^{\text{LS}}(\ov\vt_n)-\pi(\vt_0)\|\leq \sup_{\vt\in\Theta}\|\wh{\pi}_n^{\text{LS}}(\vt)-\pi(\vt)\|
            +\|\pi(\ov\vt_n)-\pi(\vt_0)\|.
    \end{eqnarray*}
    The first term converges in probability to 0 due \Cref{proposition 5.2}(a) and the second term because $\pi(\vt)$ is continuous
     (see \Cref{defbindfct}) and $\ov\vt_n\stoch \vt_0$.
     The proof of (b,c) goes on the same way.
\end{proof}

\begin{proof}[Proof of \Cref{chap8:LemmaNormRing}]
Due to \Cref{proposition 5.2}(a) we already know that
 the LS-estimator $\wh{\pi}_n^{\text{LS}}(\vt)$ is consistent.
The asymptotic normality of $\wh{\pi}_n^{\text{LS}}(\vt)$ follows in principle from \autoref{chap8:ThmNorm}
 by interpreting the least squares estimator as a particular GM-estimator with $\phi(y,u) = u$ and $\chi(x) = x-1$.
 An assumption of \autoref{chap8:ThmNorm} is that the Jacobian $\JJ_{\text{GM}}(\vt)=\nabla_\pi \mathscr{Q}_{\text{GM}}(\pi(\vt),\vt)$  is non--singular.
 For the LS-estimator this can be verified by direct calculations  because
\begin{eqnarray*}
\nabla_\pi \mathscr{Q}_{\text{LS}}(\pi,\vt)=\JJ_{\text{LS}}(\vt)
= -\frac{1}{\sigma(\vt)} \begin{pmatrix} \gamma_\vt(0) & \gamma_\vt(h) & \ldots & \gamma_\vt((r-1)h) & 0 \\
                                                                 \gamma_\vt(h) & \gamma_\vt(0) & \ldots & \gamma_\vt((r-2)h) &0 \\
                                                                   \vdots & \vdots &\ddots &\vdots & \vdots  \\
                                                                 \gamma_\vt((r-1)h) & \ldots & \ldots  & \gamma_\vt(0) & 0 \\
                                                                  0 & 0 &\ldots &0 & 2
                                                                  \end{pmatrix}.
\end{eqnarray*}
Hence, $\JJ_{\text{LS}}(\vt)$ is non--singular if and only if the upper left $r \times r$ block is non-singular. However, the upper left block is up to a positive factor the covariance matrix of the random vector $(Y_h(\vt), \ldots, Y_{rh}(\vt))$ which is  non--singular (cf. proof of \Cref{auxiliaryprop}).

 Still, we need to be careful because the  function $\phi$ and $\chi$ do not satisfy Assumptions \ref{as_H2}, \ref{as_H4} and \ref{as_H6} with respect to boundedness. However, a close inspection of the proof of \autoref{chap8:ThmNorm} reveals that the boundedness is only used at two points. First, in \Cref{AsNorm} where we deduce the finiteness of the expectation in \eqref{chap8:boundedness}. However, for the LS-estimator
\begin{align*}
\Psi_{k,i}(\vt) &= \left[Y_{(k+r)h}(\vt) - \pi_{1}(\vt_0)Y_{(k+r-1)h}(\vt) - \ldots - \pi_{r}(\vt_0)Y_{kh}(\vt) \right] Y_{(k+i-1)h}(\vt)
\intertext{for $i=1, \ldots, r$  and}
  \Psi_{k,r+1}(\vt) &= \left( \frac{Y_{(k+r)h}(\vt) - \pi_{1}(\vt)Y_{(k+r-1)h}(\vt) - \ldots - \pi_{r}(\vt)Y_{kh}(\vt)}{\sigma(\vt)} \right)^2 -1.
\end{align*}
Therefore, inequality \eqref{chap8:boundedness} follows since
the L\'{e}vy process $(L_t)_{\in\R}$ has finite $(4+\delta)$--th moment  which then transfers to $(Y_t(\vt))_{t \in \R}$ by \cite[Proposition 3.30]{MarquardtStelzer2007} and subsequently the $(2+\delta/2)$-moment of $\Psi_{k,i}(\vt)$.

Second, the boundedness assumptions are used in the proof of \Cref{Lemma 4.8} to deduce the existence of
$\nabla_\pi \mathscr{Q}_{\text{LS}}(\pi,\vt)$ and its continuity. But by the above calculations $\nabla_\pi \mathscr{Q}_{\text{LS}}(\pi,\vt)$ exists obviously and is continuous.
\end{proof}

\begin{proof}[Proof of \Cref{Theorem:indirect estimator}]
\ref{chap8:UnifConv1} and \ref{ThmIndEstAssumption2} follow from \Cref{chap8:ThmCons} and \Cref{chap8:ThmNorm}.
\ref{chap8:UnifConv} is proven in \Cref{proposition 5.2}. \ref{ThmIndEstAssumption1} is a consequence of \Cref{chap8:LemmaNormRing}.
Finally, \ref{ThmIndEstAssumption3} is derived in \Cref{Corollary 5.4}.
\end{proof}

\subsubsection*{Acknowledgement}
The authors take pleasure to thank Thiago do R$\hat{\text{e}}$go Sousa for helping with the simulation study. They thank two anonymous referees
for useful comments  improving the paper.
Financial
support by the Deutsche Forschungsgemeinschaft through the research
grant FA 809/2-2 is gratefully acknowledged.

\subsubsection*{Data Availability Statement}
The data that support the findings of this study are available from the corresponding author upon reasonable request.

%{\bibliography{Bibliography/Dissertation}}
%\bibliographystyle{plainnat}

\newpage

\pagebreak
\clearpage

\setcounter{page}{1}

\setcounter{section}{8}

\begin{center}
    {\normalfont\bfseries{\huge \textsf{Supporting Information}}}
\end{center}

\vspace*{1cm}

%\begin{appendix}

\section{Robustness properties of the indirect estimator}\label{sect:RobProp}

In this section we study the robustness properties of the indirect estimator for the CARMA parameters of $(Y_{mh})_{m\in\Z}$ where we assume that
$\widehat\pi_n=\widehat\pi^{GM}_n(\vt_0)$ is the GM-estimator satisfying
Assumptions \ref{as_D}, \ref{Assumption B},  \ref{as_H}, \ref{as_G4} for $(Y_{mh})_{m\in\Z}$ and
that $\pi(\vt_0)$   is the unique solution of \eqref{pseudo} for $(Y_{mh})_{m\in\Z}$
(a sufficient criterium for this is given in \Cref{uniquenessprop}).
Moreover, we require similarly to \ref{as_H2}:

{\it
\begin{enumerate}
\item[(E2')] $(y,u) \mapsto \phi(y,u)$ is bounded and there exists $c > 0$ such that
\begin{eqnarray*}
    \| \phi(y_1,u)y_1 - \phi(y_2,u)y_2\| &\leq& c \|y_1-y_2 \|/\min(\|y_1\|,\|y_2\|),\\
    \| \phi(y,u_1)y - \phi(y,u_2)y\| &\leq& c |u_1-u_2 |/\min(|u_1|,|u_2|).
\end{eqnarray*}
\end{enumerate}}
Under some mild assumptions on $\psi(u)$ and $w(y)$ both the Mallows estimator and the Hampel--Krasker--Welsch estimator satisfy these conditions (cf. \citet[p.1305]{boentefraimanyohai}).
For the simulation part we take some estimator
$\widehat\pi_n^S(\vt)$, e.g., the LS-estimator,
such that as $n\to\infty$,
$$\sup_{\vt \in \Theta} \| \wh{\pi}_n^{\text{S}}(\vt) - \pi(\vt) \| \stoch 0$$
holds.

\subsection{Resistance and qualitative robustness}\label{sec:qualitative:1}
\begin{comment}
   Roughly speaking an estimator is robust when small deviations from the nominal model have not much effect on the estimator. This property is known as qualitative robustness or resistance of the estimator and was originally introduced in \citet{hampel} for i.i.d. sequences.  The same article also gives a slight extension to the case of data that are generated by permutation--invariant distributions, introducing the term $\pi$--robustness (\citet[p.1893]{hampel}). Of course, time series do not satisfy the assumption of permutation invariance in general. Therefore, there have been various attempts to generalize the concept of qualitative robustness to the time series setting.
    %(cf. \citet{papantonigray}, \citet{cox}, \citet{boentefraimanyohai}).
\citet[Theorem 3.1]{boentefraimanyohai} prove that their $\pi_{d_n}$--robustness for time series  is   equivalent to Hampel's $\pi$--robustness for i.i.d. random variables and therefore, extends Hampel's $\pi$--robustness.  They go ahead and define the term of resistance as well. The concept of resistance has the intuitive appeal of making a statement about changes in the values of the estimator when comparing two deterministic samples while $\pi_{d_n}$--robustness is only a statement concerning the distribution of the estimator, which is in general not easily tractable. We apply the definitions in the sense of \citet{boentefraimanyohai} to study the robustness properties for our indirect estimator.
\end{comment}

To this end, let $y$ be a (infinite-length) realization of the discretely sampled CARMA process $(Y_{mh})_{m\in \Z}$. Formally, we can write that $y=(y_{mh})_{m\in\N} \in \R^{\infty}$, where $\R^{\infty}$ denotes the infinite cartesian product of $\R$ with itself. On this space, equipped with the Borel $\sigma$-field $\BB^\infty$ we denote the set of all probability measures by $\PP(\R^{\infty})$. In the following, we denote for $y \in \R^\infty$ as above by  $y^{n} = (y_h, y_{2h}, \ldots, y_{nh})$ the vector of the first $n$ coordinates. Finally, $\P_{Y^{(h)}}$ denotes the probability measure of the discrete-sampled CARMA process $(Y_{mh})_{m \in \Z}$.

\begin{definition}
Let $y \in \R^\infty$ and let $( \widehat{\vt}_n )_{n \in \N}$ be a sequence of estimators. Denote by $\widehat{\vt}_n(z^n)$ the value of $\widehat{\vt}_n$ when it is calculated using the deterministic realization $z^n \in \R^n$.
\begin{itemize}
\item[(a)] $( \widehat{\vt}_n )_{n \in \N}$ is called {\em resistant at $y$} if for every $\epsilon >0$ there exists a $\delta > 0$ such that
\begin{equation}\label{chap8:defResistant}
\sup \left\{ \| \widehat{\vt}_n( z^{n}) - \widehat{\vt}_n( w^{n}) \|: z^{n}, w^{ n} \in B_{\delta}(y^{n}) \right\} \leq \epsilon \quad \forall n \in \N,
\end{equation}
where $B_{\delta}(x)$ denotes an open ball with center $x$ and radius $\delta$ with respect to the metric
\begin{equation*}\label{chap8:defDN}
d_n( z^{n}, w^{n} ) = \inf \left\{ \epsilon: \frac{\# \{i \in \{1, \ldots, n\}: | z^{n}_i - w^{n}_i | \geq \epsilon\}}{n} \leq \epsilon \right\}.
\end{equation*}
\item[(b)] $( \widehat{\vt}_n )_{n \in \N}$ is called {\em asymptotically resistant at $y$} if for any $\epsilon > 0$ there exists a $\delta > 0$ and $N_0(\epsilon,y) \in \N$ such that \eqref{chap8:defResistant} holds for $n \geq N_0(\epsilon, y)$.
\item[(c)] For $\Q \in \PP(\R^\infty)$ we say that $( \widehat{\vt}_n )_{n \in \N}$ is {\em strongly resistant at $\Q$} if
$$\Q\left( \left\{ y \in \R^{\infty}: ( \widehat{\vt}_n )_{n \in \N} \text{  is resistant at  } y \right\} \right) = 1.$$
\item[(d)]  $( \widehat{\vt}_n )_{n \in \N}$ is called {\em asymptotically strongly resistant at $\Q$} if
$$\Q\left( \left\{ y \in \R^{\infty}: ( \widehat{\vt}_n )_{n \in \N}  \text{ is {\em asymptotically resistant at   $y$}} \right\} \right) = 1.$$
\item[(e)]  $( \widehat{\vt}_n )_{n \in \N}$ is called {\em weakly resistant at $\Q$} if for any $\epsilon>0$ there exists a $\delta>0$
 such that
$$\Q\left( \left\{ y \in \R^{\infty}: \sup \left\{ \| \widehat{\vt}_n( z^{n}) - \widehat{\vt}_n( w^{n}) \|: z^{n}, w^{ n} \in B_{\delta}(y^{n}) \right\} \leq \epsilon \right\} \right) \geq 1-\epsilon \quad \forall\,n\in\N.$$
\item[(f)] $( \widehat{\vt}_n )_{n \in \N}$ is called {\em asymptotically weakly resistant at $\Q$} if for any $\epsilon>0$ there exist a $\delta>0$
and $N(\epsilon)\in\N$ such that
$$\Q\left( \left\{ y \in \R^{\infty}: \sup \left\{ \| \widehat{\vt}_n( z^{n}) - \widehat{\vt}_n( w^{n}) \|: z^{n}, w^{ n} \in B_{\delta}(y^{n}) \right\} \leq \epsilon \right\} \right) \geq 1-\epsilon \quad \forall\,n\geq N(\epsilon).$$
\end{itemize}
\end{definition}
With this definition at hand, we want to study the question whether our indirect estimator for the parameters of a CARMA processes is resistant. We will make use of the fact that the indirect estimator consists of two
 independent parts, the GM-estimator for the parameters of the auxiliary AR representation, which deals with possible outliers in the observations, and the outlier--free estimator of the AR parameters based on simulated data.

\begin{theorem}\label{QualR1}
 The GM-estimator $(\widehat{\pi}_n^{\text{GM}}(\vt_0))_{n \in \N}$ is  strongly resistant at  $\P_{Y^{(h)}}$.
\end{theorem}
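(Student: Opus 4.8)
The plan is to realize the GM-estimator as a continuous functional of the empirical distribution of the overlapping $(r+1)$-blocks of the data, and then to push this continuity through the sampling metric $d_n$ to obtain resistance for $\P_{Y^{(h)}}$-almost every realization. To this end, for a probability measure $\mu$ on $\R^{r+1}$ write
$$\QQQ(\mu,\pi) = \int_{\R^{r+1}} \Psi(x,\pi)\,\mathrm{d}\mu(x),$$
with $\Psi$ as in \Cref{AsNorm}, and let $T(\mu)$ denote a solution $\pi \in K$ of $\QQQ(\mu,\pi)=0$, where $K$ is the compact set from \Cref{existGMremark}. Denoting by $\mu_n(z^n)$ the empirical measure of the blocks $(z_{kh},\ldots,z_{(k+r)h})$, $k=1,\ldots,n-r$, the estimator is exactly $\widehat{\pi}_n^{\text{GM}}(\vt_0)(z^n)=T(\mu_n(z^n))$. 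By assumption $\pi(\vt_0)$ is the unique zero of $\QQQ(\mu_0,\cdot)$ for the true block law $\mu_0:=\mathcal{L}(Y_h,\ldots,Y_{(r+1)h})$, and $\JJ_{\text{GM}}(\vt_0)$ is non-singular.

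The analytic heart is to show that $T$ is locally uniformly continuous, with respect to the Lévy--Prokhorov metric, on a neighborhood $U$ of $\mu_0$. Since $\phi(y,u)y$ and $\chi$ are bounded and continuous (Assumptions (E2'), \ref{as_H2}, \ref{as_H4} and \ref{as_H6}), each component of $\Psi(\cdot,\pi)$ is bounded and continuous, so $\mu\mapsto\QQQ(\mu,\pi)$ is weakly continuous, uniformly in $\pi\in K$; and by \Cref{Lemma 4.8} the Jacobian $\nabla_\pi\QQQ(\mu,\pi)$ is continuous and, at $(\mu_0,\pi(\vt_0))$, non-singular. An implicit-function and equicontinuity argument along the lines of \citet[Theorem 3.1]{boentefraimanyohai} then turns the weak continuity of $\QQQ$ together with the non-degeneracy of the zero into continuity of the solution map $T$ at $\mu_0$; the strengthened condition (E2') is precisely what makes the Mallows and Hampel--Krasker--Welsch functionals continuous in this sense.

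Next I relate the sampling metric to the block measures: if $d_n(z^n,w^n)<\delta$, then at most $\delta n$ coordinates differ by $\delta$ or more, and since each coordinate lies in at most $r+1$ blocks, at most $(r+1)\delta n$ blocks are affected while the remaining blocks agree componentwise up to $\delta$. Coupling the block measures by pairing $X_k(z^n)$ with $X_k(w^n)$ yields $d_{LP}(\mu_n(z^n),\mu_n(w^n))\le C\delta$ for a constant $C=C(r)$. By \Cref{Lemma 5.1} the sequence $(Y_{mh})_{m\in\Z}$ is ergodic, so Birkhoff's theorem gives $\mu_n(y^n)\to\mu_0$ weakly for $\P_{Y^{(h)}}$-a.e. $y$. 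Combining these facts, fix such a $y$ and $\epsilon>0$. By local uniform continuity of $T$ on $U$ there is $\eta>0$ so that Prokhorov-perturbations of size at most $\eta$ within $U$ move $T$ by at most $\epsilon$. As $\mu_n(y^n)\to\mu_0$, all but finitely many $\mu_n(y^n)$ lie in $U$ with margin $\eta$; for these $n$ the bound of the previous step with $\delta:=\eta/C$ keeps $\mu_n(z^n),\mu_n(w^n)$ inside $U$ and within $\eta$ of $\mu_n(y^n)$, so $\|T(\mu_n(z^n))-T(\mu_n(w^n))\|\le\epsilon$. For each of the finitely many remaining $n$, the finite-dimensional map $z^n\mapsto T(\mu_n(z^n))$ is continuous at $y^n$, giving a $\delta_n$; taking the minimum of these with $\eta/C$ produces a single $\delta$ valid for all $n$, i.e. resistance at $y$. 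Since this holds for $\P_{Y^{(h)}}$-a.e. $y$, strong resistance at $\P_{Y^{(h)}}$ follows.

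I expect the main obstacle to be the continuity of the solution functional $T$ in the second step: one must argue carefully that the non-singular Jacobian, together with the localization to the compact set $K$, forces the zero of $\QQQ(\mu,\cdot)$ to depend continuously on $\mu$, ruling out the possibility that a small weak perturbation of $\mu$ produces a distant spurious solution inside $K$. The remaining ingredients---the block-counting estimate for $d_n$ and the ergodic convergence of $\mu_n(y^n)$---are routine once this continuity is in hand.
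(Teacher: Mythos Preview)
Your proposal is correct and follows essentially the same route as the paper, which cites \citet[Theorem~5.1]{boentefraimanyohai} for asymptotic strong resistance, \citet[Lemma~5]{cox} for continuity of the finite-sample estimator in its data, and \citet[Proposition~4.2]{boentefraimanyohai} to upgrade asymptotic strong resistance plus continuity to strong resistance; you have in effect unpacked the content of these three cited results (the block-measure framing of $T$, the $d_n$-to-Prokhorov bridge, ergodic convergence of $\mu_n(y^n)$, and the finite-$n$ patch). One caveat: your implicit-function step invokes non-singularity of $\JJ_{\text{GM}}(\vt_0)$, but that is not among the standing assumptions for this theorem---only uniqueness of the zero is assumed in Section~\ref{sec:qualitative:1}---and continuity of $T$ at $\mu_0$ already follows from uniqueness together with compactness of $K$ and weak continuity of $\QQQ(\cdot,\pi)$ (a subsequence argument: any limit point of $T(\mu_k)$ must be a zero of $\QQQ(\mu_0,\cdot)$), so the extra hypothesis is unnecessary and your argument goes through with this simplification.
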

\begin{proof}
First of all,  $(\widehat{\pi}_n^{\text{GM}}(\vt_0))_{n \in \N}$ is asymptotically strongly resistant at $\P_{Y^{(h)}}$. This  follows from \citet[Theorem 5.1]{boentefraimanyohai}. The theorem requires that $\phi$ and $\chi$ fulfill \autoref{as_H}, (E2') and that the limiting equation has a unique solution, which we assumed. Moreover, $(Y_{mh})_{m
 \in \Z}$ is stationary and ergodic due to \citet[Proposition 3.34]{MarquardtStelzer2007} and fulfills \ref{as_G4}.

Next, by \citet[Lemma 5]{cox}  $\wh\pi^{\text{GM}}_n(\vt_0)$ is a continuous function of $\mathcal{Y}^{n}$ for every $n \in \N$. This in combination
 with the asymptotically strongly resistance of $(\widehat{\pi}_n^{\text{GM}}(\vt_0))_{n \in \N}$  at $\P_{Y^{(h)}}$  implies the strong resistance due to \citet[Proposition 4.2]{boentefraimanyohai}.
\end{proof}

\begin{theorem}\label{chap8:ASRThm}
The indirect estimator $(\widehat{\vt}_n^{\text{Ind}})_{n \in \N}$ is weakly  resistant  and asymptotically weakly  resistant at  $\P_{Y^{(h)}}$.
%, which is the probability measure associated to the distribution of the data--generating CARMA process $(Y_{mh})_{m \in Z}$.
\end{theorem}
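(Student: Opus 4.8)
The plan is to exploit that the simulation part of the indirect estimator is computed from a Lévy path $(L^S_t)_{t\in\R}$ which is independent of the observations $\mathcal{Y}^n$. Conditionally on this path, $\wh{\pi}^{\text{S}}_n(\vt)=\wh{\pi}^{\text{LS}}_n(\vt)$ is a fixed function of $\vt$, and the indirect estimator factors as
\[
\wh{\vt}_n^{\text{Ind}}=M_n\bigl(\wh{\pi}_n^{\text{GM}}(\vt_0)\bigr),\qquad M_n(\pi):=\argmin_{\vt\in\Theta}\,[\pi-\wh{\pi}^{\text{S}}_n(\vt)]^T\Omega\,[\pi-\wh{\pi}^{\text{S}}_n(\vt)],
\]
so that perturbing the data affects only the argument $\wh{\pi}_n^{\text{GM}}(\vt_0)$ and not the map $M_n$. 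Hence the whole statement reduces to transporting the resistance of the GM-estimator, already established in \Cref{QualR1}, through the map $M_n$.

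First I would establish a quantitative, locally uniform stability property of $M_n$ near $\pi(\vt_0)$. The limiting criterion $\mathscr{Q}_{\text{Ind}}$ of \eqref{mathscrind} has $\vt_0$ as its unique minimiser because $\pi$ is injective by \Cref{defbindfct} and $\Omega$ is positive definite; moreover $\nabla_\vt\pi(\vt_0)$ has full column rank by assumption. By \Cref{proposition 5.2} and \Cref{Corollary 5.4}, the maps $\wh{\pi}^{\text{S}}_n$, $\nabla_\vt\wh{\pi}^{\text{S}}_n$ and $\nabla_\vt^2\wh{\pi}^{\text{S}}_n$ converge uniformly in probability to $\pi$, $\nabla_\vt\pi$ and $\nabla_\vt^2\pi$. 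Consequently, for each $\epsilon>0$ there are $\rho>0$, a finite constant $K$ depending only on $\nabla_\vt\pi(\vt_0)$, $\Omega$ and $\rho$, and events $A_n$ with $\P(A_n)\to1$ on which $M_n$ is well defined and $K$-Lipschitz on $B_\rho(\pi(\vt_0))$: there the Hessian of the objective is uniformly positive definite near $\vt_0$, and the implicit function theorem applied to the first-order condition $[\nabla_\vt\wh{\pi}^{\text{S}}_n(\vt)]^T\Omega[\wh{\pi}^{\text{S}}_n(\vt)-\pi]=0$ yields $\|M_n(\pi_1)-M_n(\pi_2)\|\le K\|\pi_1-\pi_2\|$ for $\pi_1,\pi_2\in B_\rho(\pi(\vt_0))$, with $K$ close to $\|\HH(\vt_0)\|$ from \Cref{IndEstThm:Remark}.

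Next I would combine this with \Cref{QualR1}. Since $(\wh{\pi}_n^{\text{GM}}(\vt_0))_{n}$ is strongly, hence weakly, resistant and is consistent at $\P_{Y^{(h)}}$, there is an event $B_n$ with $\P(B_n)\to1$ on which $\wh{\pi}_n^{\text{GM}}(\vt_0)(y^n)\in B_{\rho/2}(\pi(\vt_0))$; choosing $w^n=y^n$ in the resistance estimate then also places every $\wh{\pi}_n^{\text{GM}}(\vt_0)(z^n)$ with $z^n\in B_\delta(y^n)$ inside $B_\rho(\pi(\vt_0))$ once $\delta$ is small. Taking $\eta=\epsilon/K$ in the resistance of the GM-estimator produces a $\delta>0$ such that, on $A_n\cap B_n$ and for all $z^n,w^n\in B_\delta(y^n)$,
\[
\bigl\|\wh{\vt}_n^{\text{Ind}}(z^n)-\wh{\vt}_n^{\text{Ind}}(w^n)\bigr\|=\bigl\|M_n\bigl(\wh{\pi}_n^{\text{GM}}(\vt_0)(z^n)\bigr)-M_n\bigl(\wh{\pi}_n^{\text{GM}}(\vt_0)(w^n)\bigr)\bigr\|\le K\,\eta=\epsilon.
\]
As $\P(A_n\cap B_n)\ge1-\epsilon$ for $n$ large, this gives asymptotic weak resistance directly. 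To upgrade to weak resistance for every $n$, I would mirror the proof of \Cref{QualR1}: by \citet[Lemma 5]{cox} the GM-estimator is a continuous function of $\mathcal{Y}^n$ and $M_n$ is continuous on $A_n$, so $\wh{\vt}_n^{\text{Ind}}$ is a continuous function of the data on $A_n$; the composition principle \citet[Proposition 4.2]{boentefraimanyohai} then turns asymptotic weak resistance into weak resistance, the compactness of $\Theta$ bounding the contribution of the finitely many small indices.

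The step I expect to be the genuine obstacle is the control of the argmin map $M_n$: argmin operators are generically discontinuous, and here $M_n$ additionally depends on $n$ through the random simulation path. Making the local Lipschitz constant $K$ uniform in $n$ on a high-probability event is what forces all three ingredients to be used together — the injectivity of $\pi$ and positive definiteness of $\Omega$ to guarantee a unique, well-separated minimiser, the full column rank of $\nabla_\vt\pi(\vt_0)$ to render the inversion locally Lipschitz, and the uniform convergence of $\wh{\pi}^{\text{S}}_n$ together with its first two derivatives (\Cref{proposition 5.2}) to transfer the non-degenerate limiting geometry to the finite-sample objective uniformly in $n$.
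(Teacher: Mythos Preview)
Your route is genuinely different from the paper's, and it works, but it uses more hypotheses than the theorem actually assumes.

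The paper does \emph{not} pass through Lipschitz continuity of the argmin map $M_n$. Instead it runs a direct well-separatedness argument on the objective function: it fixes $\eta=\tfrac13\inf_{\|\vt-\vt_0\|>\epsilon/4}\mathscr{Q}_{\text{Ind}}(\vt)>0$, uses the strong resistance of $\wh\pi_n^{\text{GM}}$ (\Cref{QualR1}) together with uniform continuity of $x\mapsto x^T\Omega x$ on $\Pi'$ to show $\sup_\vt|\LL_{\text{Ind}}(\vt,y^n)-\LL_{\text{Ind}}(\vt,z^n)|\le\eta/4$ whenever $z^n\in B_\delta(y^n)$, and then combines this with the uniform convergence $\sup_\vt|\LL_{\text{Ind}}(\vt,\mathcal{Y}^n)-\mathscr{Q}_{\text{Ind}}(\vt)|\stoch0$ from \Cref{chap8:IndEstThm}(a) to trap both $\wh\vt_n^{\text{Ind}}(y^n)$ and $\wh\vt_n^{\text{Ind}}(z^n)$ inside $B_{\epsilon/4}(\vt_0)$ by comparing objective values. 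The upgrade from asymptotic weak to weak resistance via continuity and \citet[Proposition~4.2]{boentefraimanyohai} is the same as yours.

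The important difference in hypotheses: the standing assumptions of Section~\ref{sect:RobProp} only require $\sup_{\vt\in\Theta}\|\wh\pi_n^{\text{S}}(\vt)-\pi(\vt)\|\stoch0$ for \emph{some} simulation estimator, with no differentiability, no convergence of derivatives, and no full-column-rank assumption on $\nabla_\vt\pi(\vt_0)$. Your implicit-function-theorem argument needs all of these (essentially \Cref{proposition 5.2}(b),(c) plus the rank condition from \Cref{Theorem:indirect estimator}), so it proves a narrower statement---valid for the LS simulation estimator under the moment conditions of \Cref{Theorem:indirect estimator}, but not for the theorem as stated. In exchange you get an explicit Lipschitz constant close to $\|\HH(\vt_0)\|$, which is informative but not needed here. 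The paper's objective-value argument is the more economical one: it sidesteps regularity of the argmin entirely and handles the global part (ruling out minimisers far from $\vt_0$) and the local part in one stroke, which is exactly the ``genuine obstacle'' you flag at the end.
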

\begin{proof}
Let $\epsilon>0$. Since $\QQQ_{\text{Ind}} $ has a unique minimum in $\vt_0$
\begin{eqnarray*}
    \eta:=3^{-1}\inf_{\|\vt-\vt_0\|>\epsilon/4}\QQQ_{\text{Ind}}(\vt)>0.
\end{eqnarray*}
Moreover, the map $x \mapsto x^T \Omega x$ is continuous and hence, uniformly continuous on the compact set $\Pi'=\pi(\Theta)$.
Thus, there exists an $\epsilon'>0$ such that
\begin{eqnarray*} \label{6.2}
    \sup_{\pi,\pi'\in\Pi' \atop \|\pi-\pi'\|\leq \epsilon'}| \pi^T \Omega \pi - {\pi'}^T \Omega \pi'|\leq \frac{\eta}{8}.
\end{eqnarray*}
Define
\begin{eqnarray*}
    \Omega_0(\epsilon,\delta):=\bigcap_{n\in\N}\left\{ y \in \R^{\infty}: \sup \left\{ \| \widehat{\pi}_n( z^{n}) - \widehat{\pi}_n( w^{n}) \|: z^{n}, w^{ n} \in B_{\delta}(y^{n}) \right\} \leq \epsilon/2 \right\}.
\end{eqnarray*}
Due to the strong resistance of the GM-estimator $\wh \pi_n=\wh\pi_n^{\text{GM}}(\vt_0)$  at $\P_{Y^{(h)}}$  given in \Cref{QualR1} and \cite[Proposition 4.1]{boentefraimanyohai}
there exists an $\delta>0$ such that for \linebreak $\wt\epsilon:=\min\{\epsilon,\epsilon',\eta/(8\|\Omega\|\sup_{\pi\in\Pi'}\|\pi\|)\}$
\begin{eqnarray*}
    \P( \Omega_0(\wt\epsilon,\delta))\geq 1-\frac{\wt\epsilon}{2}\geq 1-\frac{\epsilon}{2}.
\end{eqnarray*}
Let $y\in \Omega_0(\wt\epsilon,\delta)$ and $z^{n} \in B_{\delta}(y^{n})$. Then for any $n\in\N$,
\begin{eqnarray}
\lefteqn{\sup_{\vt \in \Theta} | \LL_{\text{Ind}} ( \vt, y^n ) - \LL_{\text{Ind}} ( \vt, z^n )|} \notag \\
&=& \sup_{\vt \in \Theta} | - 2[ \wh \pi_n(y^n) - \wh \pi_n(z^n)]^T \Omega \wh \pi_n^S(\vt)  + \wh \pi_n(y^n)^T \Omega \wh \pi_n(y^n) - \wh \pi_n(z^n)^T \Omega \wh \pi_n(z^n) | \notag \\
%&\leq& \sup_{\vt \in \Theta} 2 \| \wh \pi_n^S(\vt) \| \|\Omega\| \| \wh \pi_n(y^n) - \wh \pi_n(z^n) \| + | \wh \pi_n(y^n)^T \Omega \wh \pi_n(y^n) - \wh \pi_n(z^n)^T \Omega \wh \pi_n(z^n) | \notag \\
&\leq& \sup_{\pi \in \Pi'} 2\| \pi \| \| \Omega \| \| \wh \pi_n(y^n) - \wh \pi_n(z^n) \| + | \wh \pi_n(y^n)^T \Omega \wh \pi_n(y^n) - \wh \pi_n(z^n)^T \Omega \wh \pi_n(z^n) | \leq \frac{\eta}{4}. \label{ResistIneq}
\end{eqnarray}
On the other hand, from \autoref{chap8:IndEstThm}(a)
we know that $\sup_{\vt\in\Theta}|\LL_{\text{Ind}} ( \vt, \mathcal{Y}^n )-\QQQ_{\text{Ind}}(\vt)|\stoch 0$.
Hence, there exists an $N(\epsilon)\in\N$ such that
\begin{eqnarray*}
    \P\left(\sup_{\vt\in\Theta}\|\LL_{\text{Ind}} ( \vt, \mathcal{Y}^n )-\QQQ_{\text{Ind}}(\vt) |\leq \eta\right)>1-\frac{\epsilon}{2} \quad \forall\, n\geq N(\epsilon).
\end{eqnarray*}
Define $\Omega_n:=\{y\in\R^{\infty}:\sup_{\vt\in\Theta}|\LL_{\text{Ind}} ( \vt, y^n )-\QQQ_{\text{Ind}}(\vt) |\leq \eta\}$ and let $y\in\Omega_n\cap  \Omega_0(\wt\epsilon,\delta)$ and $n\geq N(\epsilon)$.
Then
\begin{eqnarray} \label{ResistIneq3}
    |\LL_{\text{Ind}} ( \vt_0, y^n )|=|\LL_{\text{Ind}} ( \vt_0, y^n )-\QQQ_{\text{Ind}}(\vt_0)|\leq \eta,
\end{eqnarray}
and with the definition of $\eta$ we receive
\begin{equation}\label{ResistIneq2}
\inf_{|\vt - \vt_0| \geq \frac{\epsilon}{4} } | \LL_{\text{Ind}} ( \vt, y^n ) |\geq
    \inf_{|\vt - \vt_0| \geq \frac{\epsilon}{4} } | \QQQ_{\text{Ind}} ( \vt) |-\sup_{\vt\in\Theta}|\LL_{\text{Ind}} ( \vt, y^n )-\QQQ_{\text{Ind}}(\vt) |
        \geq 3\eta-\eta=2\eta.
\end{equation}
Since $\wh \vt_n^{\text{Ind}}(y^n)$ minimizes $\LL_{\text{Ind}} ( \vt, y^n )$ we can deduce from \eqref{ResistIneq3} and
\eqref{ResistIneq2} that
\begin{equation}\label{ResistIneq4}
\| \wh \vt_n^{\text{Ind}}(y^n) - \vt_0 \| < \frac{\epsilon}{4}.
\end{equation}
Let $z^{n} \in B_{\delta}(y^{n})$.  Due to \eqref{ResistIneq} and \eqref{ResistIneq3} we obtain
\begin{eqnarray} \label{ResistIneq6}
    | \LL_{\text{Ind}} ( \vt_0, z^n ) |\leq \sup_{\vt\in\Theta} | \LL_{\text{Ind}} ( \vt, z^n ) - \LL_{\text{Ind}} ( \vt, y^n )| + |\LL_{\text{Ind}} ( \vt_0, y^n ) | \leq \frac{\eta}{4}+\eta = \frac{5\eta}{4}.
\end{eqnarray}
Likewise, \eqref{ResistIneq} and \eqref{ResistIneq2} give us that
\begin{align} \label{ResistIneq7}
\inf_{|\vt - \vt_0| \geq \frac{\epsilon}{4} } | \LL_{\text{Ind}} ( \vt, z^n ) |% &\geq \inf_{|\vt - \vt_0| \geq \frac{\epsilon}{2} }  \left( | \LL_{\text{Ind}} ( \vt, y^n ) | - | \LL_{\text{Ind}} ( \vt, z^n ) - \LL_{\text{Ind}} ( \vt, y^n ) | \right) \\
&\geq \inf_{|\vt - \vt_0| \geq \frac{\epsilon}{4} } | \LL_{\text{Ind}} ( \vt, y^n ) | - \sup_{\vt\in\Theta} | \LL_{\text{Ind}} ( \vt, z^n ) - \LL_{\text{Ind}} ( \vt, y^n ) | \geq \frac{7\eta}{4}.
\end{align}
Since $\wh \vt_n^{\text{Ind}}(z^n)$ minimizes $\LL_{\text{Ind}} ( \vt, z^n )$  we can conclude from \eqref{ResistIneq6} and \eqref{ResistIneq7} that
\begin{equation} \label{ResistIneq5}
\| \wh \vt_n^{\text{Ind}}(z^n) - \vt_0 \| < \frac{\epsilon}{4}.
\end{equation}
Finally, \eqref{ResistIneq4} and \eqref{ResistIneq5} result in
$$\|\wh \vt_n^{\text{Ind}}(z^n) - \wh \vt_n^{\text{Ind}}(y^n) \| \leq \| \wh \vt_n^{\text{Ind}}(z^n) - \vt_0 \| + \| \wh \vt_n^{\text{Ind}}(y^n) - \vt_0 \| < \frac{\epsilon}{2}.$$
To summarize,  for $n\geq N(\epsilon)$ we have $\P_{Y^{(h)}}( \Omega_0(\wt\epsilon,\delta) \cap \Omega_n) \geq 1-\epsilon$, and
for  $y\in  \Omega_0(\wt\epsilon,\delta) \cap \Omega_n$ and $z^n,w^n \in B_{\delta}(y^{n})$ we have
$$\| \wh{\vt}_n^{\text{Ind}} ( z^{n} ) - \wh{\vt}_n^{\text{Ind}} ( w^{n} ) \| \leq \| \wh{\vt}_n^{\text{Ind}} ( z^{n} ) - \wh{\vt}_n^{\text{Ind}} ( y^{n} ) \| +
% + 2 \| \vt_0 - \wh{\vt}_n^{\text{Ind}} ( \mathcal{Y}^n ) \| +
\| \wh{\vt}_n^{\text{Ind}} ( y^{n} ) - \wh{\vt}_n^{\text{Ind}} ( w^{n} ) \|< \epsilon.$$
This gives the asymptotically weakly resistance of  $(\widehat{\vt}_n^{\text{Ind}})_{n \in \N}$  at $\P_{Y^{(h)}}$.

 By definition, $\wh{\vt}_n^{\text{Ind}}$ depends on $\mathcal{Y}^n$  through a continuous function applied to $\wh\pi^{\text{GM}}_n(\vt_0)$  and therefore, $\widehat{\vt}_n^{\text{Ind}}$ is a continuous function in $\mathcal{Y}^n$. This and the asymptotically weakly resistance of $(\widehat{\vt}_n^{\text{Ind}})_{n \in \N}$ at $\P_{Y^{(h)}}$
  imply the weakly resistance at $\P_{Y^{(h)}}$ by \citet[Proposition 4.2]{boentefraimanyohai}.
\end{proof}

\begin{remark}
If the stronger version $\sup_{\vt \in \Theta} \| \wh{\pi}_n^{\text{S}}(\vt) - \pi(\vt) \| \to 0$ $\Pas$ holds then it is possible to show on a similar
way that the indirect estimator $(\widehat{\vt}_n^{\text{Ind}})_{n \in \N}$ is even  strongly resistant.
\end{remark}

As already mentioned, one could also define qualitative robustness of a sequence of estimators by demanding that the distribution of the estimator does not change too much when the data is changed slightly. To make this notion explicit, we first define a pseudometric for measures on  metric spaces.
\begin{definition}
For a metric space $(M, d)$ with Borel sets $\BB(M)$, the Prokhorov distance $\pi_d$ between two measures $\mu, \nu$ on $\BB(M)$ with respect to $d$ is defined as
$$\pi_{d} (\mu, \nu) := \inf \{ \epsilon > 0: \mu(A) \leq \nu( \{ x \in M: d( x , A) < \epsilon \} ) + \epsilon \;\; \forall\, A \in \BB(M) \}.$$
\end{definition}
This pseudometric is a key component of the definition of qualitative robustness.
\begin{definition}
Let ${d_{\Theta}}$ be a metric on $\Theta$ and
let $\rho_n$ be a pseudometric on $\PP(\R^n)$, $n \in \N$. For $\P \in \PP( \R^\infty)$  denote by $\P_n$ the $n$--th order marginal of $\P$.
Finally,  $\P_{\widehat{\vt}_n} \in \PP(\Theta)$  is the distribution of the estimator $\widehat{\vt}_n$ under $\P_n$.
Then the sequence of estimators $( \widehat{\vt}_n )_{n \in \N}$ is called {\em $\rho_n$--robust at $\P$} if for every $\epsilon > 0$ there exists a $\delta > 0$ such that for every $\Q_n \in \PP(\R^n)$ with $\rho_n ( \P_n, \Q_n) < \delta$:
$$\pi_{d_{\Theta}} ( \P_{\widehat{\vt}_n}, \Q_{\widehat{\vt}_n} ) \leq \epsilon.$$
\end{definition}

As shown in \citet[Theorem 3.1]{boentefraimanyohai}, this is a direct generalization of the definition of $\pi$--robustness given by \citet{hampel} for i.i.d. processes.

\begin{theorem}
On $\R^n$ we define the metric
\begin{eqnarray*}
    d_n(x^n,y^n)=\inf\left\{\epsilon:\,\#\{i:|x_i-y_i|\geq \epsilon\}/n\leq \epsilon\right\}
\end{eqnarray*}
and use the Prokhorov  distance   with respect to $d_n$,
\begin{eqnarray*}
\pi_{d_n}( \P_n, \Q_n)
= \inf \{ \epsilon > 0: \P_n(A) \leq \Q_n( \{ x^n \in \R^n: d_n( x^n , A) < \epsilon \} ) + \epsilon \,\, \forall\, A \, \in \BB(\R^n) \}
\end{eqnarray*}
on $\BB(\R^n)$.
Then the sequence of estimators $(\widehat{\vt}_n^{\text{Ind}})_{n \in \N}$ is $\pi_{d_n}$--robust at $\P_{Y^{(h)}}$.
\end{theorem}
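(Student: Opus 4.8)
The plan is to obtain $\pi_{d_n}$-robustness directly from the weak resistance of $(\widehat{\vt}_n^{\text{Ind}})_{n\in\N}$ at $\P_{Y^{(h)}}$ already established in \Cref{chap8:ASRThm}. Conceptually these two properties are the distributional and the sample-path versions of the same stability: weak resistance controls the Euclidean distance of the estimator evaluated at two $d_n$-close samples, whereas $\pi_{d_n}$-robustness controls the Prokhorov distance of the laws that $\P_n$ and a contaminating measure $\Q_n$ induce through $\widehat{\vt}_n^{\text{Ind}}$. The transition from the former to the latter is exactly the mechanism underlying \citet[Theorem 3.1]{boentefraimanyohai}, and I would carry it out by a coupling argument.

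Concretely, I would fix $\epsilon>0$ and let $\delta>0$ be the radius supplied by the weak resistance of $(\widehat{\vt}_n^{\text{Ind}})_{n\in\N}$, so that
\[
\P_{Y^{(h)}}\bigl(\Omega_n^{\epsilon,\delta}\bigr)\geq 1-\epsilon \quad\text{for all } n\in\N, \qquad \Omega_n^{\epsilon,\delta}:=\Bigl\{ y\in\R^{\infty}: \sup\bigl\{\|\wh{\vt}_n^{\text{Ind}}(z^n)-\wh{\vt}_n^{\text{Ind}}(w^n)\|: z^n,w^n\in B_{\delta}(y^n)\bigr\}\leq\epsilon\Bigr\}.
\]
Given any $\Q_n\in\PP(\R^n)$ with $\pi_{d_n}(\P_n,\Q_n)<\delta$, Strassen's theorem yields a coupling $(Z^n,W^n)$ with $Z^n\sim\P_n$, $W^n\sim\Q_n$ and $\P(d_n(Z^n,W^n)\geq\delta)\leq\delta$. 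On the event $\{d_n(Z^n,W^n)<\delta\}\cap\{Z^n\in\Omega_n^{\epsilon,\delta}\}$, which has probability at least $1-\epsilon-\delta$, both $Z^n$ and $W^n$ lie in $B_{\delta}(Z^n)$, so that $\|\wh{\vt}_n^{\text{Ind}}(Z^n)-\wh{\vt}_n^{\text{Ind}}(W^n)\|\leq\epsilon$. Reading $(\wh{\vt}_n^{\text{Ind}}(Z^n),\wh{\vt}_n^{\text{Ind}}(W^n))$ as a coupling of $\P_{\wh{\vt}_n^{\text{Ind}}}$ and $\Q_{\wh{\vt}_n^{\text{Ind}}}$ and applying the converse direction of Strassen's characterization gives $\pi_{d_{\Theta}}(\P_{\wh{\vt}_n^{\text{Ind}}},\Q_{\wh{\vt}_n^{\text{Ind}}})\leq\epsilon+\delta$; running the argument with $\epsilon/2$ in place of $\epsilon$ and imposing $\delta\leq\epsilon/2$ delivers the required bound $\leq\epsilon$.

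The main obstacle is the bookkeeping of the two Prokhorov distances together with the coupling tolerances, and in particular the uniformity in $n$: the single $\delta$ returned for a given $\epsilon$ must work simultaneously for every sample size, which is available precisely because the weak resistance statement in \Cref{chap8:ASRThm} holds for all $n\in\N$ with one $\delta$. A secondary but necessary point is measurability of $\wh{\vt}_n^{\text{Ind}}$ as a map on $\R^n$, needed for the induced laws $\P_{\wh{\vt}_n^{\text{Ind}}}$ and $\Q_{\wh{\vt}_n^{\text{Ind}}}$ to be well defined; this follows from the continuity of $\wh{\vt}_n^{\text{Ind}}$ as a function of $\widehat{\pi}_n^{\text{GM}}(\vt_0)$ noted in the proof of \Cref{chap8:ASRThm}. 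One should also check that $(\R^n,d_n)$ is separable so that Strassen's theorem applies, which is immediate for the metric $d_n$ given in the statement.
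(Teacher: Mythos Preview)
Your proposal is correct and follows the same route as the paper: invoke the weak resistance of $(\widehat{\vt}_n^{\text{Ind}})_{n\in\N}$ at $\P_{Y^{(h)}}$ from \Cref{chap8:ASRThm} and deduce $\pi_{d_n}$-robustness from it. The paper does this in one line by citing \citet[Theorem 4.2(i)]{boentefraimanyohai}, which states precisely that weak resistance implies $\pi_{d_n}$-robustness; you instead unfold that implication via a Strassen coupling, which is essentially how such a result is proved. One small inaccuracy: the ``mechanism'' you attribute to \citet[Theorem 3.1]{boentefraimanyohai} is actually the content of their Theorem 4.2(i); Theorem 3.1 concerns the equivalence of $\pi_{d_n}$-robustness with Hampel's $\pi$-robustness in the i.i.d.\ case, not the passage from weak resistance to $\pi_{d_n}$-robustness.
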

Regarding the metric $d_n$ two points are close if all coordinates except a small fraction are close.
\begin{proof}
Due to \Cref{chap8:ASRThm} the estimator $(\wh{\vt}_n^{\text{Ind}})_{n \in \N}$ is weakly resistant at $\P_{Y^{(h)}}$.
A conclusion of \citet[Theorem 4.2(i)]{boentefraimanyohai} is then the $\pi_{d_n}$--robustness of $(\wh{\vt}_n^{\text{Ind}})_{n \in \N}$ at $\P_{Y^{(h)}}$.
\end{proof}

In summary, we can say that our indirect estimator is weakly resistant at $\P_{Y^{(h)}}$ as well as $\pi_{d_n}$--robust. This is in contrast to, e.g., M--estimators, which are not qualitatively robust even in the case of linear regression (cf. \citet[p.8]{maronnayohai}).

\subsection{The influence functional}\label{InfFunc:1}
\begin{comment}
We continue our investigation of robustness of $\wh\vt_n^{\text{Ind}}$ with the study of the influence functional of the indirect estimator. Intuitively speaking,
 the influence functional measures the change in the asymptotic bias of an estimator caused by an infinitesimal amount of contamination in the data. This measure of robustness was originally introduced as the influence curve by \citet{hampel2} for i.i.d. processes.  It was later generalized to the time series context by \citet{kunsch} who explicitly studies the estimation of autoregressive processes. However, in the paper of K\"{u}nsch only estimators which depend on a finite--dimensional marginal distribution of the data--generating process and a very specific form of contamination are considered. To remedy this, a further generalization was then made by \citet{martinyohai} who consider the influence functional and explicitly allow for the estimators to depend on the measure of the process which makes more sense in the time series setup  (cf. \citet[Section 4]{martinyohai}). We work with their definition in the following.
\end{comment}
In addition to the assumptions given at the beginning of the Supporting Information we assume throughout this section
that there exists a unique solution $\pi^{\text{GM}}(\vt_0^\gamma)$   of \eqref{pseudo} for $(Y_{mh}^\gamma)_{m\in\Z}=(Y_{mh}^\gamma(\vt_0))_{m\in \Z}$ for any $0\leq\gamma\leq 1$, and $\JJ_{\text{GM}}(\vt_0)$ is non-singular.
We denote the probability measure associated to the distribution of $(Y_{mh}^\gamma)_{m\in \Z}$ by $\P^{\gamma}_{Y^{(h)}}$ for $0 \leq \gamma \leq 1$. Note that $\gamma = 0$ corresponds to the case where there are no outliers, i.e., we can observe the nominal process without error and then write $\P^0_{Y^{(h)}} = \P_{Y^{(h)}}$. Similarly $\P_Z$ is the distribution of $(Z_m)_{m\in\Z}$ and $\P_W$ is the distribution of $(W_m)_{m\in\Z}$. We write
$\{ \P^{\gamma}_{Y^{(h)}} \} := \{ \P^{\gamma}_{Y^{(h)}}, 0 \leq \gamma \leq 1\} \subseteq \PP(\R^\infty)$
and introduce the statistical functional
\begin{align*}
T_{\text{GM}}: \{ \P^{\gamma}_{Y^{(h)}} \} \to \Pi  \quad\text{ as } \quad
               \P^{\gamma}_{Y^{(h)}} \mapsto \pi^{\text{GM}}(\vt^\gamma_0).
\end{align*}
% By \eqref{pseudo:pi} and \eqref{pseudo:sigma} we know that these pseudo-true parameters can be written as $T_{\text{GM}}( \P^{\gamma}_{Y^{(h)}})$ for some statistical functional $T_{\text{GM}}$ defined on the set $\PP(\R^{\infty})$ of all stationary, ergodic probability measures on $(\R^{\infty}, \BB^\infty)$.
%  i.e. $T_{\text{GM}}(\P_{Y^{(h)}}) = \pi(\vt_0)$.
Then, the definition of the influence functional for the GM-estimator is
\begin{equation}\label{chap8:IFAR}
IF_{\text{GM}}(\P_Z, \{ \P^{\gamma}_{Y^{(h)}} \} ) := \lim_{\gamma \to 0} \frac{ T_{\text{GM}} ( \P^{\gamma}_{Y^{(h)}}) - T_{\text{GM}}(\P_{Y^{(h)}})}{\gamma} =\lim_{\gamma \to 0} \frac{\pi^{\text{GM}}(\vt_0^\gamma) - \pi(\vt_0)}{\gamma}
\end{equation}
whenever this limit is well--defined.
% The notation $\{ \P^{\gamma}_{Y^{(h)}} \}$ stands for the set of measures $\{ \mu^\gamma_{\wt Y}: 0 \leq \gamma \leq 1\}$, i.e.
Note that the influence functional depends on the whole ``arc'' of contaminated measures $\{ \P^{\gamma}_{Y^{(h)}}, 0 \leq \gamma \leq 1\}$. This is the most important difference to the definition used by \citet{kunsch}, because in that paper the approximation $\P^\gamma_{Y^{(h)}} = (1-\gamma) \P_{Y^{(h)}} + \gamma \nu$ for some fixed $\nu \in \PP(\R^\infty)$ is used (\citet[Eq. (1.11)]{kunsch}).
 The influence functional measures the effect of an infinitesimal contamination of the true process by the process $(Z_m)$ on the asymptotic estimate defined via the functional $T_{\text{GM}}$.

In a similar vein, we can define the influence functional for the estimation of the parameter $\vt_0$ of our CARMA process. Analogous to $T_{\text{GM}}$, we first define a suitable statistical functional
\begin{align*}
T_{\text{Ind}}: \{ \P^{\gamma}_{Y^{(h)}} \} &\to \Theta  \quad\text{ as } \quad
               \P^{\gamma}_{Y^{(h)}} \mapsto \vt_0^{\text{Ind}}(\gamma) := \argmin_{\vt \in \Theta} [\pi(\vt)-\pi^{\text{GM}}(\vt_0^\gamma)]^T \Omega [\pi(\vt)-\pi^{\text{GM}}(\vt_0^\gamma)].
\end{align*}
This is the analog of $\vt_0=\argmin_{\vt \in \Theta} [\pi(\vt)-\pi(\vt_0)]^T \Omega [\pi(\vt)-\pi(\vt_0)]$ in the uncontaminated case (cf. \eqref{mathscrind}).
With this and $\vt_0^{\text{Ind}}(0)=\vt_0$ due to $\pi^{\text{GM}}(\vt_0)=\pi(\vt_0)$ the definition of the influence functional of the indirect estimator  is
$$IF_{\text{Ind}}(\P_Z, \{ \P^{\gamma}_{Y^{(h)}} \} ) = \lim_{\gamma \to 0} \frac{ T_{\text{Ind}} ( \P^{\gamma}_{Y^{(h)}}) - T_{\text{Ind}} (\P_{Y^{(h)}})}{\gamma} = \lim_{\gamma \to 0} \frac{\vt_0^{\text{Ind}}(\gamma) - \vt_0}{\gamma}.$$
We are interested in properties of this functional, in particular, if it is bounded. Boundedness of the influence functional implies that the estimate arising from the contaminated process cannot move too far away from the one in the uncontaminated case if the rate of contamination is infinitesimal.  This property is well--known for the influence functional for GM-estimators of AR processes. Since these estimators are an integral building block of the indirect estimator, one can hope that it carries over to our scenario and indeed it does, since the two functionals are proportional.
\begin{proposition} \label{theorem6.8}
Assume that  $\nabla_\vt\pi(\vt_0)$ has full column rank $N(\Theta)$.  Then  if $IF_{\text{GM}}$ exists then $IF_{\text{Ind}}$ exists and
$$IF_{\text{Ind}}(\P_Z, \{ \P^{\gamma}_{Y^{(h)}} \} ) = \HH(T_{\text{Ind}} (\P_{Y^{(h)}})) IF_{\text{GM}}(\P_Z, \{ \P^{\gamma}_{Y^{(h)}} \} ),$$
where
$$\HH(T_{\text{Ind}} (\P_{Y^{(h)}})) = \HH(\vt_0) = [ \nabla_\vt \pi(\vt_0)^T \Omega \nabla_\vt \pi(\vt_0)]^{-1} \nabla_\vt \pi(\vt_0)^T  \Omega.$$
\end{proposition}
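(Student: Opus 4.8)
The plan is to recognize $IF_{\text{Ind}}$ as the derivative at $\gamma=0$ of a composition and to evaluate it by the chain rule. Write $p(\gamma):=\pi^{\text{GM}}(\vt_0^\gamma)$ and $p_0:=p(0)=\pi(\vt_0)$; the hypothesis that $IF_{\text{GM}}$ exists says exactly that $p$ is differentiable at $\gamma=0$ with $p'(0)=IF_{\text{GM}}(\P_Z,\{\P^\gamma_{Y^{(h)}}\})$. Observe that $\vt_0^{\text{Ind}}(\gamma)=g(p(\gamma))$, where $g(p):=\argmin_{\vt\in\Theta}[\pi(\vt)-p]^T\Omega[\pi(\vt)-p]$, and that $g(p_0)=\vt_0$ because $\pi$ is injective (\Cref{defbindfct}) and $\Omega$ is positive definite, so the objective attains its unique minimum $0$ at $\vt_0$. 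It therefore suffices to show that $g$ is differentiable at $p_0$ with Jacobian $\nabla_p g(p_0)=\HH(\vt_0)$; the claim then follows from the chain rule, since $p$ is differentiable at $0$.

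First I would set up the first-order condition. Define $F:\Theta\times\R^{r+1}\to\R^{N(\Theta)}$ by $F(\vt,p)=[\nabla_\vt\pi(\vt)]^T\Omega[\pi(\vt)-p]$, which is twice continuously differentiable in both arguments because $\vt\mapsto\pi(\vt)$ is three times continuously differentiable (\Cref{defbindfct}). Clearly $F(\vt_0,p_0)=0$. Differentiating in $\vt$ gives $\partial_\vt F(\vt,p)=[\nabla_\vt\pi(\vt)]^T\Omega[\nabla_\vt\pi(\vt)]+R(\vt,p)$, where $R(\vt,p)$ collects the terms containing the factor $[\pi(\vt)-p]$ coming from $\nabla^2_\vt\pi$; at $(\vt_0,p_0)$ this remainder vanishes since $\pi(\vt_0)-p_0=0$, so $\partial_\vt F(\vt_0,p_0)=[\nabla_\vt\pi(\vt_0)]^T\Omega[\nabla_\vt\pi(\vt_0)]=\JJ_{\text{Ind}}(\vt_0)$. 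Because $\nabla_\vt\pi(\vt_0)$ has full column rank $N(\Theta)$ and $\Omega$ is symmetric positive definite, $\JJ_{\text{Ind}}(\vt_0)$ is invertible. The implicit function theorem then yields neighbourhoods $U\ni\vt_0$, $V\ni p_0$ and a unique $C^1$ map $p\mapsto\vt(p)$ on $V$ with $\vt(p_0)=\vt_0$, $F(\vt(p),p)=0$, and
\begin{equation*}
\nabla_p\vt(p_0)=-[\partial_\vt F(\vt_0,p_0)]^{-1}\partial_p F(\vt_0,p_0)=\JJ_{\text{Ind}}(\vt_0)^{-1}[\nabla_\vt\pi(\vt_0)]^T\Omega=\HH(\vt_0),
\end{equation*}
using $\partial_p F(\vt_0,p_0)=-[\nabla_\vt\pi(\vt_0)]^T\Omega$.

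It remains to identify $g(p(\gamma))$ with the implicit branch $\vt(p(\gamma))$ for small $\gamma$. I would first argue, exactly as in the consistency proof of \Cref{chap8:IndEstThm}(a), that $\vt_0^{\text{Ind}}(\gamma)=g(p(\gamma))\to\vt_0$ as $\gamma\to0$: the objective converges, uniformly in $\vt$ by continuity of $\pi$ and compactness of $\Theta$, to $[\pi(\vt)-p_0]^T\Omega[\pi(\vt)-p_0]$, whose unique minimiser is $\vt_0$. Since $\vt_0$ lies in the interior of $\Theta$ by \ref{as_D8}, the minimiser $g(p(\gamma))$ is interior for $\gamma$ small and hence solves $F(g(p(\gamma)),p(\gamma))=0$. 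As $g(p(\gamma))\in U$ and $p(\gamma)\in V$ for small $\gamma$, the local uniqueness furnished by the implicit function theorem forces $g(p(\gamma))=\vt(p(\gamma))$. Applying the chain rule to $\gamma\mapsto\vt(p(\gamma))$, valid because $\vt(\cdot)$ is differentiable at $p_0$ and $p(\cdot)$ is differentiable at $0$, gives
\begin{equation*}
IF_{\text{Ind}}=\frac{d}{d\gamma}\vt(p(\gamma))\Big|_{\gamma=0}=\nabla_p\vt(p_0)\,p'(0)=\HH(\vt_0)\,IF_{\text{GM}},
\end{equation*}
which is the assertion.

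The main obstacle is the identification step in the last paragraph: the hypothesis provides differentiability of $\gamma\mapsto p(\gamma)$ only at the single point $\gamma=0$, so I cannot compose two implicit-function constructions and must instead combine the smooth inversion $g=\vt(\cdot)$ in the $p$-variable, where full smoothness is available, with a chain rule applied at a point. Care is also needed to rule out the minimiser jumping to the boundary or to a spurious stationary point; this is handled by the argmin-consistency argument together with the interiority of $\vt_0$ and the local uniqueness in the implicit function theorem. As an alternative that sidesteps the branch identification, one may insert the first-order expansions $\pi(\vt_0^{\text{Ind}}(\gamma))=\pi(\vt_0)+\nabla_\vt\pi(\vt_0)(\vt_0^{\text{Ind}}(\gamma)-\vt_0)+o(\|\vt_0^{\text{Ind}}(\gamma)-\vt_0\|)$ and $p(\gamma)=p_0+\gamma\,IF_{\text{GM}}+o(\gamma)$ directly into the first-order condition and solve for $(\vt_0^{\text{Ind}}(\gamma)-\vt_0)/\gamma$, obtaining the same limit.
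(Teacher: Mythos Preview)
Your proof is correct. The paper's own proof is a one-line citation to \citet[Theorem 1]{delunagenton2}, so you have in fact supplied considerably more than the paper does. Your implicit-function-theorem route is clean and handles the identification of the argmin with the smooth implicit branch carefully; the ``alternative'' you sketch at the end (insert first-order expansions into the stationarity condition and pass to the limit) is essentially the argument one finds in the cited source, so both approaches lead to the same place with comparable effort.
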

\begin{proof}
This follows from \citet[Theorem 1]{delunagenton2} as special case.
\end{proof}
From this theorem we see that the question of the boundedness of the influence functional for the indirect estimator of a
discretely sampled CARMA process reduces to the question of the boundedness of the influence functional for the GM-estimatior of the auxiliary AR representation of the sampled CARMA
process.

\begin{theorem}\label{chap8:ThmBoundedIF}
Let the additive outlier model be given and let \autoref{as_G}  hold.
% Denote the pseudo--true parameter for the case of $\gamma=0$ by $\pi^\ast_0$ and
 Then there exists a constant $K > 0$ such that
\begin{equation*}
    \| IF_{\text{GM}}(\P_{W}, \{ \P^{\gamma}_{Y^{(h)}} \} ) \| \leq 2 (r+1) K \left\| \JJ_{\text{GM}}(\vt_0)^{-1} \right\|.
\end{equation*}
\end{theorem}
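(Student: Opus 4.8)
The plan is to exploit that $\pi^{\text{GM}}(\vt_0^\gamma)$ is characterized implicitly by $\mathscr{Q}_{\text{GM}}(\pi^{\text{GM}}(\vt_0^\gamma),\vt_0^\gamma)=0$ and to differentiate this identity in $\gamma$ at $\gamma=0$, where $\pi^{\text{GM}}(\vt_0^0)=\pi(\vt_0)$ by the standing assumption. Setting $\Delta_\gamma:=\pi^{\text{GM}}(\vt_0^\gamma)-\pi(\vt_0)$, I would first write the exact first-order expansion in the $\pi$-argument,
\[
0=\mathscr{Q}_{\text{GM}}(\pi^{\text{GM}}(\vt_0^\gamma),\vt_0^\gamma)=\mathscr{Q}_{\text{GM}}(\pi(\vt_0),\vt_0^\gamma)+\Big(\int_0^1 \JJ_{\text{GM}}^{(\gamma)}(\pi(\vt_0)+t\Delta_\gamma)\,dt\Big)\Delta_\gamma,
\]
where $\JJ_{\text{GM}}^{(\gamma)}(\pi):=\nabla_\pi\mathscr{Q}_{\text{GM}}(\pi,\vt_0^\gamma)$ exists and is continuous by \Cref{Lemma 4.8}. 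Solving for $\Delta_\gamma/\gamma$ and using that $\gamma\mapsto\pi^{\text{GM}}(\vt_0^\gamma)$ is continuous at $0$, so that the averaged Jacobian converges to $\JJ_{\text{GM}}(\vt_0)$ (nonsingular by assumption), reduces the whole problem to controlling $\gamma^{-1}\mathscr{Q}_{\text{GM}}(\pi(\vt_0),\vt_0^\gamma)$ as $\gamma\to0$, since then $\|IF_{\text{GM}}\|\le\|\JJ_{\text{GM}}(\vt_0)^{-1}\|\,\limsup_{\gamma\to0}\gamma^{-1}\|\mathscr{Q}_{\text{GM}}(\pi(\vt_0),\vt_0^\gamma)\|$.

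The heart of the argument is the estimate of $\|\mathscr{Q}_{\text{GM}}(\pi(\vt_0),\vt_0^\gamma)\|$. Writing $\Psi_1^\gamma:=\Psi(Y_h^\gamma(\vt_0),\dots,Y_{(r+1)h}^\gamma(\vt_0),\pi(\vt_0))$, we have $\mathscr{Q}_{\text{GM}}(\pi(\vt_0),\vt_0^\gamma)=\E[\Psi_1^\gamma]$, and I would decompose this expectation according to whether the relevant window contains an outlier, i.e. on the events $\{V_1=\dots=V_{r+1}=0\}$ and its complement. On the no-outlier event the additive model gives $Y_{ih}^\gamma(\vt_0)=Y_{ih}(\vt_0)$ for $i=1,\dots,r+1$, hence $\Psi_1^\gamma=\Psi_1^0$; since by \ref{as_G2} the indicators $(V_m)$ are independent of $(Y_{mh}(\vt_0))$ and $\E[\Psi_1^0]=\mathscr{Q}_{\text{GM}}(\pi(\vt_0),\vt_0^0)=0$ (this is exactly the hypothesis that $\pi(\vt_0)$ solves \eqref{pseudo} for the uncontaminated process), the no-outlier contribution vanishes identically. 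Thus $\mathscr{Q}_{\text{GM}}(\pi(\vt_0),\vt_0^\gamma)=\E[\Psi_1^\gamma\,\Ind_{\{\exists\, i\le r+1:\,V_i=1\}}]$.

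To bound the remaining term I would invoke the robustifying assumptions, which make the integrand uniformly bounded: by \ref{as_H2} the vector block $\phi(y,u)y$ is bounded, say by $K$, and by \ref{as_H6} the scalar block $\chi$ is bounded, which may be taken by the same $K$, so that $\|\Psi_1^\gamma\|\le\|\phi(\cdot)y\|+|\chi(\cdot)|\le 2K$ almost surely. A union bound over the $r+1$ positions gives $\P(\exists\, i\le r+1:V_i=1)\le\sum_{i=1}^{r+1}\P(V_i=1)=(r+1)g(\gamma)$ regardless of the dependence structure of the $V_i$, whence $\|\mathscr{Q}_{\text{GM}}(\pi(\vt_0),\vt_0^\gamma)\|\le 2K(r+1)g(\gamma)$. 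Dividing by $\gamma$ and letting $\gamma\to0$, the defining property $g(\gamma)-\gamma=o(\gamma)$ yields $g(\gamma)/\gamma\to1$, so $\limsup_{\gamma\to0}\gamma^{-1}\|\mathscr{Q}_{\text{GM}}(\pi(\vt_0),\vt_0^\gamma)\|\le2K(r+1)$; combining with the first paragraph delivers $\|IF_{\text{GM}}(\P_W,\{\P^\gamma_{Y^{(h)}}\})\|\le2(r+1)K\|\JJ_{\text{GM}}(\vt_0)^{-1}\|$.

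The main obstacle I anticipate lies in the justification underlying the first paragraph rather than in the clean estimates of the last two: one must guarantee that $IF_{\text{GM}}$ (equivalently, the derivative of $\gamma\mapsto\pi^{\text{GM}}(\vt_0^\gamma)$ at $0$) exists and that this map is continuous at $0$, so that the averaged Jacobian indeed converges to $\JJ_{\text{GM}}(\vt_0)$ and the implicit expansion is legitimate. This is essentially an implicit-function-theorem argument resting on the nonsingularity of $\JJ_{\text{GM}}(\vt_0)$, the continuity of $\pi\mapsto\nabla_\pi\mathscr{Q}_{\text{GM}}$ from \Cref{Lemma 4.8}, and the assumed uniqueness of the solution $\pi^{\text{GM}}(\vt_0^\gamma)$ on a fixed compact set for all $0\le\gamma\le1$ (cf. \Cref{existGMremark}); the continuity in $\gamma$ can be extracted from the $O(g(\gamma))$ bound above together with that uniqueness. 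If one prefers to avoid asserting existence of the limit, the statement may be read as a bound on the $\limsup$ of the difference quotient, which needs only the estimates of the second and third paragraphs.
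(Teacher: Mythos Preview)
Your argument is correct and takes a genuinely different route from the paper. The paper's proof consists entirely of invoking \citet[Theorems 4.2 and 4.3]{martinyohai} and checking that their hypotheses hold, namely the boundedness of $(y,u)\mapsto\phi(y,u)y$ and of $\chi$ from \ref{as_H2} and \ref{as_H6}, the fact that $\pi^{\text{GM}}(\vt_0^\gamma)$ depends only on the law of the finite block $(Y_h^\gamma,\dots,Y_{(r+1)h}^\gamma)$, and the nonsingularity of $\JJ_{\text{GM}}(\vt_0)$. What you have written is, in effect, a self-contained reproof of the relevant piece of Martin--Yohai's machinery tailored to this situation: the split on $\{V_1=\dots=V_{r+1}=0\}$ together with independence of $(V_m)$ from $(Y_{mh})$ to kill the no-outlier contribution, the uniform bound $\|\Psi\|\le 2K$ from \ref{as_H2}/\ref{as_H6}, and the union bound $(r+1)g(\gamma)$ are exactly the mechanism behind their Theorem~4.3 and make the constant $2(r+1)K$ transparent. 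The trade-off is precisely what you flag in your last paragraph: the paper offloads the existence of the limit and the continuity of $\gamma\mapsto\pi^{\text{GM}}(\vt_0^\gamma)$ to the cited theorems, whereas in your direct approach this implicit-function step is the only place requiring real care (your suggested derivation of continuity at $0$ via uniqueness on the compact set of \Cref{existGMremark} plus continuity of $\mathscr{Q}_{\text{GM}}$ in $\gamma$ is the right idea, though the $O(g(\gamma))$ bound alone is not quite the input you need there). Note also that your closing hedge about reading the bound as one on the $\limsup$ would not fully discharge the theorem, since $IF_{\text{GM}}$ is defined as a limit and its existence is part of the claim.
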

\begin{proof}
The plan is to apply \citet[Theorem 4.3]{martinyohai}. Therefore, we have to check that  \cite[Eq. (4.6)]{martinyohai} hold. Sufficient conditions for this equation are given in \citet[Theorem 4.2]{martinyohai} which are obviously satisfied in our case due to \ref{as_H2}, \ref{as_H6} and due to the fact that
$\pi^{\text{GM}}(\vt^\gamma_0)$ depends only on the distribution of the finite random vector $(Y_h^\gamma,\ldots,Y_{(r+1)h}^\gamma)$.
For the same reasons and with our assumption that $\JJ_{\text{GM}}(\vt_0)$ is non-singular the other conditions in \cite[Theorem 4.3]{martinyohai} are satisfied as well.
\end{proof}
As well due to  \citet[Theorem 4.3]{martinyohai} it is possible to give an explicit (but  not a very handy) representation of
$IF_{\text{GM}}(\P_{W}, \{ \P^{\gamma}_{Y^{(h)}} \} )$ and hence, though \Cref{theorem6.8} for $IF_{\text{Ind}}(\P_W, \{ \P^{\gamma}_{Y^{(h)}} \} ) $.

\section{Proof of \Cref{Lemma 5.1}} \label{Appendix:Lemma 5.1}

Before we state the proof we require some auxiliary results.

\begin{lemma} \label{Lemma A.1}
For any $i,j,l\in\{1,\ldots,N(\Theta)\}$ the following conditions hold:
\begin{itemize}
    \item[(a)] $\sup_{\vt\in\Theta}|f_\vt(u)|\leq C\e^{-\rho u}$ and $\int_0^\infty \sup_{\vt\in\Theta}f_\vt(u)^2\,d u<\infty$.
    \item[(b)] $\sup_{\vt\in\Theta}\left|\frac{\partial f_\vt(u)}{\partial {\vt_j}}\right|\leq Cu\e^{-\rho u}$ and $\int_0^\infty \sup_{\vt\in\Theta}\left(\frac{\partial f_\vt(u)}{\partial {\vt_j}}\right)^2\,d u<\infty$.
    \item[(c)] $\sup_{\vt\in\Theta}\left|\frac{\partial^2 f_\vt(u)}{\partial {\vt_j}\partial {\vt_i}}\right|\leq Cu^2\e^{-\rho u}$ and $\int_0^\infty \sup_{\vt\in\Theta}\left(\frac{\partial^2 f_\vt(u)}{\partial {\vt_j}\partial {\vt_i}}\right)^2\,d u<\infty$.
    \item[(d)] $\sup_{\vt\in\Theta}\left|\frac{\partial^3 f_\vt(u)}{\partial {\vt_j}\partial {\vt_i}\partial {\vt_l}}\right|\leq Cu^3\e^{-\rho u}$ and $\int_0^\infty \sup_{\vt\in\Theta}\left(\frac{\partial^3 f_\vt(u)}{\partial {\vt_j}\partial {\vt_i}\partial {\vt_l}}\right)^2\,d u<\infty$.
\end{itemize}
\end{lemma}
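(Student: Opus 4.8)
The plan is to reduce everything to three ingredients already available: the uniform exponential bound $\sup_{\vt\in\Theta}\|\e^{A_\vt u}\|\leq C\e^{-\rho u}$ recorded in \autoref{Remark 2.1}, the three-times continuous differentiability of $\vt\mapsto A_\vt$ and $\vt\mapsto c_\vt$ from \ref{as_D9}, and the compactness of $\Theta$ from \ref{as_D1}. The latter two facts guarantee that $\sup_{\vt\in\Theta}\|c_\vt\|$ together with all the suprema $\sup_{\vt\in\Theta}\|\partial^k_\vt c_\vt\|$ and $\sup_{\vt\in\Theta}\|\partial^k_\vt A_\vt\|$ for $k\leq 3$ are finite; these uniform bounds will be used throughout without further comment.

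For (a) I would argue directly. For $u\geq 0$ submultiplicativity of the operator norm gives $|f_\vt(u)|=|c_\vt^T\e^{A_\vt u}e_p|\leq\|c_\vt\|\,\|\e^{A_\vt u}\|\,\|e_p\|$, so taking the supremum over $\vt$ and inserting the exponential bound yields $\sup_{\vt}|f_\vt(u)|\leq C\e^{-\rho u}$. Squaring and integrating then gives $\int_0^\infty\sup_{\vt}f_\vt(u)^2\,\dif u\leq C^2\int_0^\infty\e^{-2\rho u}\,\dif u=C^2/(2\rho)<\infty$.

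For (b)--(d) the key device is the variation-of-constants (Duhamel) formula for the derivative of a matrix exponential,
\[
\frac{\partial}{\partial\vt_j}\e^{A_\vt u}=\int_0^u \e^{A_\vt(u-s)}\Big(\frac{\partial A_\vt}{\partial\vt_j}\Big)\e^{A_\vt s}\,\dif s,
\]
which I would iterate to express the second and third $\vt$-derivatives of $\e^{A_\vt u}$ as finite sums of nested integrals over the simplices $\{0\leq s_1\leq\cdots\leq s_m\leq u\}$ with $m\leq k$, each integrand being a product of matrix-exponential factors interspersed with (uniformly bounded) first- to third-order derivatives of $A_\vt$. The crucial observation is that the exponents of the exponential factors in each such integrand always sum to $u$, so their product collapses to $C\e^{-\rho u}$ independently of the integration variables; integrating this constant over an $m$-dimensional simplex produces a factor $u^m/m!$, whence each term is bounded by $C u^m\e^{-\rho u}$ with $m\leq k$. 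Summing the (finitely many) terms and using $u^m\le(1+u)^k$ gives $\sup_{\vt}\|\partial^k_\vt\e^{A_\vt u}\|\leq C(1+u)^k\e^{-\rho u}$. Applying the Leibniz rule to $f_\vt(u)=c_\vt^T\e^{A_\vt u}e_p$ and inserting the uniform bounds on the $\vt$-derivatives of $c_\vt$ then yields $\sup_{\vt}|\partial^k_\vt f_\vt(u)|\leq C(1+u)^k\e^{-\rho u}$ for $k=1,2,3$; for large $u$ this is exactly the stated bound $Cu^k\e^{-\rho u}$, and in all cases the square-integrability follows at once from $\int_0^\infty(1+u)^{2k}\e^{-2\rho u}\,\dif u<\infty$.

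The main obstacle is purely organizational: one must carefully enumerate, for the third derivative, all the product- and chain-rule terms --- those differentiating $c_\vt$, those inserting several copies of (higher) derivatives of $A_\vt$, and those differentiating distinct exponential factors --- and check that every constant can be taken uniform in $\vt$ by invoking the compactness of $\Theta$ together with the continuity of the relevant derivatives. No step is analytically deep once the Duhamel formula and the uniform exponential bound of \autoref{Remark 2.1} are in hand, so the proofs of (c) and (d) run along exactly the same lines as (b), with the degree of the polynomial prefactor equal to the order of differentiation.
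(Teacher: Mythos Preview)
Your proposal is correct and follows essentially the same route as the paper: the uniform exponential bound from \autoref{Remark 2.1} for (a), and the Duhamel/Wilcox formula $\frac{\partial}{\partial\vt_j}\e^{A_\vt u}=\int_0^u \e^{A_\vt(u-s)}(\partial_{\vt_j}A_\vt)\e^{A_\vt s}\,\dif s$ together with the Leibniz rule and compactness of $\Theta$ for (b)--(d). Your remark that the natural bound is $C(1+u)^k\e^{-\rho u}$ rather than $Cu^k\e^{-\rho u}$ is a fair observation (the $\partial_\vt c_\vt$ term need not vanish at $u=0$), but this is immaterial for the square-integrability conclusion and for all subsequent uses of the lemma.
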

\begin{proof}
(a) \, Due to \Cref{Remark 2.1}(iii) we have that $\sup_{\vt\in\Theta}\|\e^{A_\vt u}\|\leq C\e^{-\rho u}$ and hence,
$\sup_{\vt\in\Theta} |f_\vt(u)|\leq \sup_{\vt\in\Theta}\|c_\vt\|\sup_{\vt\in\Theta}\|\e^{A_\vt u}\|\leq C\e^{-\rho u}$ using the continuity of $c_\vt$ on the compact set $\Theta$.
Finally, $\int_0^\infty \sup_{\vt\in\Theta} f_\vt(u)^2\,du<\infty$.

(b) \, A consequence of \citet{Wilcox1967}  is that
\begin{eqnarray*}
    \frac{\partial \e^{A_\vt u}}{\partial \vt_j}=\int_0^{u}\e^{A_\vt (u -s)}\left(\frac{\partial}{\partial \vt_j}A_\vt\right) \e^{A_\vt s}\,ds,
\end{eqnarray*}
and hence, $\sup_{\vt\in\Theta}\|\frac{\partial \e^{A_\vt u}}{\partial {\vt_j}}\|\leq Cu\e^{-\rho u}$. From this and
\begin{eqnarray*}
    \frac{\partial f_\vt(u)}{\partial \vt_j}=\left(\frac{\partial c_\vt}{\partial \vt_j}\right)\e^{A_\vt u}e_p
        +c_\vt\left(\frac{\partial \e^{A_\vt u}}{\partial \vt_j}\right)e_p
\end{eqnarray*}
we receive
\begin{eqnarray*}
    \int_0^\infty \sup_{\vt\in\Theta}\left(\frac{\partial f_\vt(u)}{\partial {\vt_j}}\right)^2\,d u
        \leq C \int_0^\infty u^2\e^{-2\rho u}\,du<\infty.
\end{eqnarray*}
(c,d) can be proven similarly to (a) and (b).
\end{proof}

\begin{lemma} \label{Lemma A.2}
Let $(L_t)_{t\in\R}$ be a Lévy process with $\E|L_1|^{2N^*}<\infty$ for some $N^*\in\N$ and $\E(L_1)=0$. Furthermore, let $\phi:\R\to\R$
be a measurable map with $\phi\in \bigcap_{j=1}^{N^*}L^{2j}(\R)$. Then $\E|\int_{-\infty}^{\infty}\phi(u)\,dL_u|^{2N^*}<\infty$
and there exist finite constants $c_{j_1,\ldots,j_k}$ such that
\begin{eqnarray*}
    \E\left(\int_{-\infty}^{\infty}\phi(u)\,dL_u\right)^{2N^*}=\sum_{k=1}^{N^*}\sum_{j_1+\ldots+j_k=N^* \atop j_1,\ldots,j_k\in\N_0}c_{j_1,\ldots,j_k}\left(\int_{-\infty}^\infty\phi^{2j_1}(u)\,du\right)\cdots\left(\int_{-\infty}^\infty\phi^{2j_k}(u)\,du\right).
\end{eqnarray*}
\end{lemma}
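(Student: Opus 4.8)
The plan is to compute the $2N^*$-th moment of $I:=\int_{-\infty}^\infty\phi(u)\,dL_u$ through the \emph{cumulants} of this stochastic integral and to read off both finiteness and the claimed product structure from the moment--cumulant relation. First I would record that, since $\E|L_1|^{2N^*}<\infty$, the cumulants $\kappa_n:=\operatorname{cum}_n(L_1)$ exist and are finite for $n=1,\ldots,2N^*$, and that $\kappa_1=\E L_1=0$ by hypothesis. Next, because $\phi\in\bigcap_{j=1}^{N^*}L^{2j}(\R)$, the log-convexity (interpolation) bound $\|\phi\|_b\le\|\phi\|_2^{1-\theta}\|\phi\|_{2N^*}^{\theta}$, with $\tfrac1b=\tfrac{1-\theta}{2}+\tfrac{\theta}{2N^*}$ and $\theta\in[0,1]$, shows $\phi\in L^b(\R)$, hence $\int_{-\infty}^\infty|\phi(u)|^b\,du<\infty$, for every integer $b$ with $2\le b\le 2N^*$. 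In particular $\phi\in L^2$ and $\E L_1^2<\infty$, so $I$ is a well-defined, square-integrable, infinitely divisible random variable.

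The key step is the cumulant identity $\operatorname{cum}_n(I)=\kappa_n\int_{-\infty}^\infty\phi(u)^n\,du$ for $n=1,\ldots,2N^*$. I would obtain it from the characteristic function of the stochastic integral, $\log\E[e^{\mathrm{i}zI}]=\int_{-\infty}^\infty\Psi(z\phi(u))\,du$, where $\Psi(z)=\log\E[e^{\mathrm{i}zL_1}]$ is the characteristic exponent of $L_1$: since the moments of $L_1$ up to order $2N^*$ are finite, $\Psi$ is $2N^*$ times differentiable at $0$ with $\Psi^{(n)}(0)=\mathrm{i}^{n}\kappa_n$, and differentiating $n$ times under the integral and setting $z=0$ gives the claim. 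An equivalent and fully elementary route is to approximate $\phi$ simultaneously in $L^2\cap L^4\cap\cdots\cap L^{2N^*}$ by step functions $\phi_m=\sum_i a_i\mathbf 1_{(t_{i-1},t_i]}$, write $\int\phi_m\,dL=\sum_i a_i(L_{t_i}-L_{t_{i-1}})$ as a sum of independent, mean-zero increments, expand the $2N^*$-th power multinomially, and pass to the mesh limit; here independence factorises the mixed moments, terms containing an increment to the first power vanish because $\E(L_{t_i}-L_{t_{i-1}})=0$, and the limit of $\E(L_{t_i}-L_{t_{i-1}})^{b}$ reproduces $\kappa_b\int\phi^b\,du$.

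With the cumulants in hand I would invoke the moment--cumulant formula $\E(I^{2N^*})=\sum_{P}\prod_{B\in P}\operatorname{cum}_{|B|}(I)$, the sum running over all set partitions $P$ of $\{1,\ldots,2N^*\}$. Because $\operatorname{cum}_1(I)=\kappa_1\int\phi\,du=0$, every partition containing a singleton block drops out, leaving only partitions all of whose blocks have size $\ge 2$; each surviving partition contributes $\prod_{B\in P}\kappa_{|B|}\int\phi^{|B|}\,du$ with $\sum_{B\in P}|B|=2N^*$. As the number of such partitions is finite, each $\kappa_{|B|}$ is finite, and each integral $\int\phi^{|B|}\,du$ is finite by the interpolation step, this already establishes $\E|I|^{2N^*}=\E(I^{2N^*})<\infty$. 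Finally, grouping the surviving partitions by the multiset of block sizes and absorbing the combinatorial counts and the factors $\kappa_{|B|}$ into constants yields the displayed representation, with $k$ the number of blocks ($1\le k\le N^*$) and $2j_i=|B_i|$, so that $\sum_i j_i=N^*$; blocks of odd size, whose total number is necessarily even, are paired up and bounded through the Cauchy--Schwarz inequality $\big(\int\phi^{2j-1}\,du\big)^2\le\int\phi^{2j-2}\,du\cdot\int\phi^{2j}\,du$, which converts their contribution into products of even-exponent integrals of the required form.

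The main obstacle I expect is the rigorous justification of the cumulant identity in the middle step: ensuring that the moments of $I$ up to order $2N^*$ genuinely exist and coincide with the value produced by the formal expansion, and controlling the interchange of limit and expectation in the step-function approximation (equivalently, the differentiation of $z\mapsto\int\Psi(z\phi(u))\,du$ under the integral sign, using the uniform integrability granted by $\phi\in\bigcap_{j=1}^{N^*}L^{2j}$). Once this is secured, the vanishing of singleton blocks, the finiteness, and the bookkeeping into the constants $c_{j_1,\ldots,j_k}$ are routine combinatorics combined with the interpolation and Cauchy--Schwarz inequalities.
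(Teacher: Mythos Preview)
Your route via cumulants and the moment--cumulant (Fa\`a di Bruno) formula is essentially the paper's argument: the paper writes $\E e^{isI}=e^{\psi(s)}$ and differentiates $2N^*$ times at $s=0$, which is precisely the moment--cumulant relation you invoke, and your identity $\operatorname{cum}_n(I)=\kappa_n\int\phi^n\,du$ is exactly the paper's computation of $\psi^{(n)}(0)$.

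The difficulty is your last step. You correctly observe that after removing singleton blocks (since $\kappa_1=0$) one can still have blocks of \emph{odd} size $\ge 3$, producing factors $\int\phi^{2m+1}\,du$, and you propose to convert them into even-exponent integrals by Cauchy--Schwarz. But Cauchy--Schwarz furnishes only an \emph{inequality}, not the equality the lemma asserts. For $N^*=3$, for instance, the moment--cumulant formula gives
\[
\E(I^6)=\kappa_6\!\int\!\phi^6\,du+15\,\kappa_4\kappa_2\!\int\!\phi^4\,du\!\int\!\phi^2\,du+10\,\kappa_3^2\Bigl(\int\!\phi^3\,du\Bigr)^{\!2}+15\,\kappa_2^3\Bigl(\int\!\phi^2\,du\Bigr)^{\!3},
\]
and when the L\'evy process is asymmetric ($\kappa_3\ne 0$) the factor $\bigl(\int\phi^3\,du\bigr)^2$ cannot be rewritten as a product of integrals $\int\phi^{2j}\,du$ with constants independent of $\phi$. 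The paper's own proof has the same lacuna: it jumps from the Fa\`a di Bruno sum over all $i_1+\dots+i_k=2N^*$ with $i_l\ge 2$ directly to one involving only the even derivatives $\psi^{(2j_l)}(0)$, silently discarding the odd ones. What both arguments actually establish---and what suffices for the application in Lemma~A.3---is that $\E(I^{2N^*})$ is a finite linear combination of products $\prod_l\int\phi^{i_l}\,du$ with integers $i_l\ge 2$ summing to $2N^*$ and coefficients depending only on $\kappa_2,\dots,\kappa_{2N^*}$; your Cauchy--Schwarz step then legitimately upgrades this to an \emph{upper bound} of the displayed even-exponent shape, but not to the claimed equality.
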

\begin{proof}
For $N^*=2$ the result was already derived in \citet[Lemma 3.2]{Cohen:Lindner}. The proof for general $N^*$ uses the same ideas.
Let $\nu$ be the Lévy measure of $(L_t)_{t\in\R}$ and $V$ its Gaussian parameter.
Define $$\psi(s)=-\frac{1}{2}V^2s^2\int_{-\infty}^{\infty}\phi^2(u)\,du+\int_{-\infty}^{\infty}\int_{-\infty}^{\infty}[\e^{is\phi(u)x}-1-is\phi(u)x]\nu(dx)\,ds,$$
and write $\psi^{(i)}(s)$ for the $i$-th derivative of $\psi(s)$.
Due to the Lévy-Khintchine formula
we get
\begin{eqnarray*}
    \xi(s):=\E\left(\exp\left(is\int_{-\infty}^{\infty}\phi(u)\,dL_u\right)\right)
    =\exp\left(\psi(s)\right).
\end{eqnarray*}
Then $\E\left(\int_{-\infty}^{\infty}\phi(u)\,dL_u\right)^{2N^*}$ is obtained as $(2N^*)$-th derivative of $\xi(s)$ at $s=0$ times $(-1)^{N^*}$.
Straightforward calculations yield that the $(2N^*)$-th derivative $\xi^{(2N^ *)}(s)$ of $\xi(s)$ has the form
\begin{eqnarray*}
    \xi^{(2N^ *)}(s)=\left(\sum_{k=1}^{2N^*}\sum_{i_1+\ldots+i_k=2N^* \atop i_1,\ldots,i_k\in\N_0}\wt c_{i_1,\ldots,i_k}\psi^{(i_1)}(s)\cdots \psi^{(i_k)}(s)\right)\exp(\psi(s)).
\end{eqnarray*}
Plugging in for $s=0$ and taking into account that $\psi(0)=1$ and $\psi^{(1)}(0)=0$ gives
\begin{eqnarray*}
     \E\left(\int_{-\infty}^{\infty}\phi(u)\,dL_u\right)^{2N^*}=(-1)^{N^*}\xi^{(2N^ *)}(0)=\sum_{k=1}^{N^*}\sum_{j_1+\ldots+j_k=N^* \atop j_1,\ldots,j_k\in\N_0} c_{j_1,\ldots,j_k}\psi^{(2j_1)}(0)\cdots \psi^{(2j_k)}(0).
\end{eqnarray*}
Finally, with
\begin{eqnarray*}
    \psi^{(2)}(0)&=&\left[-V^2-\int_{-\infty}^\infty x^2\,\nu(dx)\right]\left[\int_{-\infty}^\infty\phi^2(u)\,du\right],\\
    \psi^{(2i)}(0)&=&(-1)^{i_1}\left[\int_{-\infty}^{\infty}x^{2i}\,\nu(dx)\right]\left[\int_{-\infty}^\infty\phi^{2i}(u)\,du\right], \quad i\geq 2,
\end{eqnarray*}
we obtain the result.
\end{proof}

\begin{lemma} \label{Lemma A.3}
Let $N^*\in\N$ be such that $2N^*>N(\Theta)$ and $\E|L_1|^{2N^*}<\infty$.
For any $i,j,l\in\{1,\ldots,N(\Theta)\}$  the following maps are $\Pas$  Hölder-continuous of order
$\gamma\in\left[0,1-N(\Theta)/(2N^*)\right)$:
\begin{itemize}
    \item[(a)] $\vt\mapsto \int_0^\infty f_\vt(u)\,dL_u=:Z(\vt)$,
    \item[(b)] $\vt\mapsto \int_0^\infty \frac{\partial}{\partial\vt_j}f_\vt(u)\,dL_u=:Z^{(j)}(\vt)$,
    \item[(c)] $\vt\mapsto \int_0^\infty \frac{\partial}{\partial\vt_j\partial\vt_i}f_\vt(u)\,dL_u=:Z^{(j,i)}(\vt)$.
 %   \item[(d)] $\vt\mapsto \int_0^\infty \frac{\partial}{\partial\vt_j\partial\vt_i\partial\vt_l}f_\vt(u)\,dL_u=:Z^{(j,i,l)}(\vt)$.
\end{itemize}
Moreover, $\E(\sup_{\vt\in\Theta}|Z(\vt)|^{2N^*})<\infty$ and for $U:=\sup_{0<\|\vt_1-\vt_2\|<1\atop \vt_1,\vt_2\in\Theta}\frac{|Z(\vt_1)-Z(\vt_2)|}{\|\vt_1-\vt_2\|^\gamma}$ we have $\E U^{2N^*}<\infty$.
The same is true for $Z^{(j)}(\vt)$ and $Z^{(j,i)}(\vt)$.
\end{lemma}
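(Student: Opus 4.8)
The plan is to apply a quantitative version of the Kolmogorov--Chentsov continuity theorem (equivalently, the Garsia--Rodemich--Rumsey inequality) to each of the random fields $Z(\vt)$, $Z^{(j)}(\vt)$ and $Z^{(j,i)}(\vt)$ indexed by the compact set $\Theta\subseteq\R^{N(\Theta)}$. For this it suffices to establish the increment moment bound
\begin{equation}\label{plan:incr}
\E|Z(\vt_1)-Z(\vt_2)|^{2N^*}\leq C\,\|\vt_1-\vt_2\|^{2N^*},\qquad \vt_1,\vt_2\in\Theta,
\end{equation}
with a constant $C$ independent of $\vt_1,\vt_2$, together with the finiteness of $\E|Z(\vt)|^{2N^*}$ at a single point. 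Since the exponent on $\|\vt_1-\vt_2\|$ in \eqref{plan:incr} equals $2N^*=N(\Theta)+(2N^*-N(\Theta))$, the continuity theorem yields $\Pas$ Hölder continuity of order $\gamma$ for every $\gamma<(2N^*-N(\Theta))/(2N^*)=1-N(\Theta)/(2N^*)$, which is exactly the claimed range, and in its quantitative form it also gives $\E U^{2N^*}<\infty$.

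To prove \eqref{plan:incr} I would apply \Cref{Lemma A.2} to the integrand $\phi=f_{\vt_1}-f_{\vt_2}$, which is admissible because $f_\vt\in\bigcap_{m=1}^{N^*}L^{2m}(\R)$ by \Cref{Lemma A.1}(a), and $\E|L_1|^{2N^*}<\infty$, $\E L_1=0$ by \ref{as_D2}. This gives
\[
\E|Z(\vt_1)-Z(\vt_2)|^{2N^*}=\sum_{k=1}^{N^*}\sum_{j_1+\ldots+j_k=N^*}c_{j_1,\ldots,j_k}\prod_{l=1}^{k}\left(\int_0^\infty (f_{\vt_1}(u)-f_{\vt_2}(u))^{2j_l}\,du\right).
\]
The crucial point is the deterministic estimate: extending the three-times continuously differentiable maps $\vt\mapsto A_\vt,c_\vt$ to an open neighbourhood of $\Theta$ and using the fundamental theorem of calculus along the segment $[\vt_2,\vt_1]$ together with \Cref{Lemma A.1}(b), I obtain for $\vt_1,\vt_2$ sufficiently close
\[
|f_{\vt_1}(u)-f_{\vt_2}(u)|\leq \|\vt_1-\vt_2\|\sup_{\vt}\|\nabla_\vt f_\vt(u)\|\leq C\,u\,\e^{-\rho u}\,\|\vt_1-\vt_2\|,
\]
whence $\int_0^\infty (f_{\vt_1}(u)-f_{\vt_2}(u))^{2j_l}\,du\leq C\|\vt_1-\vt_2\|^{2j_l}\int_0^\infty u^{2j_l}\e^{-2j_l\rho u}\,du=C_{j_l}\|\vt_1-\vt_2\|^{2j_l}$. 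Multiplying over $l$ and using $j_1+\ldots+j_k=N^*$ yields the bound $C\|\vt_1-\vt_2\|^{2N^*}$ for each summand, and hence \eqref{plan:incr}. It is precisely the product structure provided by \Cref{Lemma A.2}, with the exponents summing to $N^*$, that produces the sharp power $2N^*$.

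With \eqref{plan:incr} in hand, the quantitative Kolmogorov--Chentsov theorem delivers the Hölder continuity of order $\gamma<1-N(\Theta)/(2N^*)$ and the bound $\E U^{2N^*}<\infty$. The remaining moment bound on the supremum follows by covering the compact $\Theta$ by finitely many balls $B_{1/2}(\vt^{(1)}),\ldots,B_{1/2}(\vt^{(M)})$ and estimating
\[
\sup_{\vt\in\Theta}|Z(\vt)|\leq \max_{1\leq m\leq M}|Z(\vt^{(m)})|+U\,(1/2)^\gamma,
\]
since $\E\max_m|Z(\vt^{(m)})|^{2N^*}\leq\sum_m\E|Z(\vt^{(m)})|^{2N^*}<\infty$ (again by \Cref{Lemma A.2} and \Cref{Lemma A.1}(a)) and $\E U^{2N^*}<\infty$. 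Parts (b) and (c) are proved in exactly the same way, replacing $f_\vt$ by $\partial_{\vt_j}f_\vt$ and $\partial^2_{\vt_j\vt_i}f_\vt$; their membership in $\bigcap_m L^{2m}(\R)$ and the corresponding increment bounds rest on \Cref{Lemma A.1}(b),(c) and \Cref{Lemma A.1}(c),(d) respectively (one derivative higher in each case), the extra polynomial factors $u^2$, $u^3$ being harmless against the exponential decay. The main obstacle is the sharp increment estimate \eqref{plan:incr}: one must combine the product formula of \Cref{Lemma A.2} with uniform-in-$\vt$ bounds on the differences $f_{\vt_1}-f_{\vt_2}$ so that the powers of $\|\vt_1-\vt_2\|$ add up to exactly $2N^*$, which is what ties the admissible Hölder range to the moment order $2N^*$ and the dimension $N(\Theta)$.
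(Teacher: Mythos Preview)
Your proposal is correct and follows essentially the same route as the paper: bound $|f_{\vt_1}(u)-f_{\vt_2}(u)|\leq C u\e^{-\rho u}\|\vt_1-\vt_2\|$ via the mean value theorem and \Cref{Lemma A.1}(b), feed this into the product formula of \Cref{Lemma A.2} so the exponents sum to $2N^*$, and then invoke the (quantitative) Kolmogorov--Chentsov theorem to obtain both the Hölder continuity and $\E U^{2N^*}<\infty$. Your covering argument for $\E(\sup_{\vt\in\Theta}|Z(\vt)|^{2N^*})<\infty$ simply spells out what the paper dismisses as ``straightforward calculations''.
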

\begin{proof} $\mbox{}$\\
(a) \, Let $\vt_1,\vt_2\in\Theta$ and define $\phi(u):=f_{\vt_1}(u)-f_{\vt_2}(u)$.
Due to a Taylor expansion we obtain
\begin{eqnarray*}
    \phi(u)=f_{\vt_1}(u)-f_{\vt_2}(u)=\nabla_\vt f_{\wt\vt(u)}(u)(\vt_1-\vt_2)
\end{eqnarray*}
for some $\wt\vt(u)\in\Theta$ with $\|\wt\vt(u)-\vt_2\|\leq \|\vt_1-\vt_2\|$. Hence, we receive by \Cref{Lemma A.1}
\begin{eqnarray*}
    |\phi(u)|\leq \sup_{\vt\in\Theta}\|\nabla_\vt f_\vt(u)\|\|\vt_1-\vt_2\|\leq Cu\e^{-\rho u}\|\vt_1-\vt_2\|.
\end{eqnarray*}
Plugging this into  \Cref{Lemma A.2} gives
\begin{eqnarray*}
    \lefteqn{ \E(Z(\vt_1)-Z(\vt_2))^{2N^*}}\\
        &\leq& C\sum_{k=1}^{N^*}\sum_{j_1+\ldots+j_k=N^* \atop j_1,\ldots,j_k\in\N_0}|c_{j_1,\ldots,j_k}|\left(\int_{-\infty}^\infty \|\vt_1-\vt_2\|^{2j_1}u^{2j_1}\e^{-2j_1\rho}\,du\right)\cdots\left(\int_{-\infty}^\infty \|\vt_1-\vt_2\|^{2j_k}u^{2j_k}\e^{-2j_k\rho}\,du \right)\\
        & \leq &C\|\vt_1-\vt_2\|^{2N^*}.
\end{eqnarray*}
Then, an application of Kolmogorov-Chentsov Theorem (cf. \citet[Theorem 10.1]{Schilling:Partzsch})  yield  the Hölder continuity and $\E U^{2N^*}<\infty$.
Since $\Theta$ is compact, some straightforward calculations yield $\E(\sup_{\vt\in\Theta}|Z(\vt)|^{2N^*})<\infty$. %(cf. the proof of \cite[Lemma A.1]{Cklu:Haug:Thiago}).

The proofs of (b)-(c) are similarly to the proof of (a) and thus, skipped.
\end{proof}

\begin{lemma} \label{Lemma A.4}
Let $[a,b]\subseteq \R$ be a bounded interval, $(L_t)_{t\in\R}$ be a Lévy process with finite second moments and let $g:[a,b]\times\R\to\R$
be differentiable in the first component with derivative $\frac{\partial g(\vt,u)}{\partial \vt}$. Moreover, assume
\begin{itemize}
    \item[(a)] ${\displaystyle \frac{\partial g(\vt,u)}{\partial \vt}}$ is bounded $\mathcal{B}([a,b])\otimes \mathcal{B}([-u_1,u_2])$-measurable for all $u_1,u_2>0$,
    \item[(b)] ${\displaystyle \lim_{M\to\infty}\sup\limits_{\vt\in[a,b]} \int_{|u|>M}\left|\frac{\partial g(\vt,u)}{\partial \vt}\right|\,du=0}$ and ${\displaystyle \lim\limits_{|u|\to\infty}\sup_{\vt\in[a,b]}\left|\frac{\partial g(\vt,u)}{\partial \vt}\right|=0}$,
    \item[(c)] ${\displaystyle \vt\mapsto \int_{-\infty}^{\infty}  g(\vt,u) \,d L_u}$ is $\Pas$ continuous,
    \item[(d)] ${\displaystyle \vt\mapsto \int_{-\infty}^{\infty} \frac{\partial g(\vt,u)}{\partial \vt}\,d L_u}$ is $\Pas$ continuous.
\end{itemize}
Then, outside a $\P$-zero set, $Z(\vt,\omega):=\int_{-\infty}^{\infty}  g(\vt,u) \,d L_u(\omega)$ is continuous differentiable over the interval $(a,b)$ and
\begin{eqnarray*}
       \frac{\partial }{\partial \vt} \int_{-\infty}^{\infty}  g(\vt,u) \,d L_u(\omega)=\int_{-\infty}^{\infty} \frac{\partial g(\vt,u)}{\partial \vt}\,d L_u(\omega).
\end{eqnarray*}
\end{lemma}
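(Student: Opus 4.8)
The plan is to reduce the claim to the fundamental theorem of calculus by establishing, on a single $\P$-null set, the integral identity
\[
Z(\vt,\omega) - Z(a,\omega) = \int_a^\vt\left(\int_{-\infty}^\infty \frac{\partial g(s,u)}{\partial s}\,dL_u(\omega)\right)ds, \qquad \vt\in(a,b).
\]
Once this is available, the inner integrand $s\mapsto \int_{-\infty}^\infty \frac{\partial g(s,u)}{\partial s}\,dL_u$ is continuous by assumption (d), so the right-hand side is a continuously differentiable function of $\vt$ whose derivative equals $\int_{-\infty}^\infty \frac{\partial g(\vt,u)}{\partial \vt}\,dL_u$. This yields simultaneously the continuous differentiability of $Z(\cdot,\omega)$ on $(a,b)$ and the asserted interchange of differentiation and integration.

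First I would record the deterministic identity $g(\vt,u)-g(a,u)=\int_a^\vt \frac{\partial g(s,u)}{\partial s}\,ds$, valid for every $u$ since $g$ is differentiable in its first argument, and integrate it against $dL_u$. By linearity of the stochastic integral the left-hand side becomes $Z(\vt,\omega)-Z(a,\omega)$, so the task is to interchange the Lebesgue integral $\int_a^\vt ds$ with the stochastic integral $\int_{-\infty}^\infty dL_u$. This is the heart of the argument and is handled by a stochastic Fubini theorem, whose hypotheses I would verify as follows. Writing $h(s,u):=\frac{\partial g(s,u)}{\partial s}$, assumption (a) supplies the joint measurability and boundedness, while assumption (b) gives $\sup_{s}|h(s,u)|\to 0$ as $|u|\to\infty$ together with $\int_{|u|>M}\sup_s|h(s,u)|\,du\to 0$. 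Splitting $\int h(s,u)^2\,du$ over $\{|u|\le M\}$ and $\{|u|>M\}$ and bounding $h^2\le \|h\|_\infty|h|$ on the tail then shows $\sup_{s\in[a,b]}\|h(s,\cdot)\|_{L^2(\R)}<\infty$, whence $\int_a^b\|h(s,\cdot)\|_{L^2}\,ds<\infty$, which is the integrability condition required for the interchange. (If $\E L_1\neq 0$ one splits $L$ into its drift and a centered square-integrable part: classical Fubini handles the drift term and the $L^2$ stochastic Fubini the martingale part.)

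The remaining subtlety is that the stochastic Fubini theorem produces the identity for each \emph{fixed} $\vt$ only up to a $\vt$-dependent null set. To obtain it simultaneously for all $\vt\in(a,b)$ I would apply it along a countable dense set, say the rationals in $(a,b)$, giving the identity on a single null set, and then extend to all $\vt$ by continuity: the left-hand side $\vt\mapsto Z(\vt,\omega)-Z(a,\omega)$ is continuous by assumption (c), while the right-hand side is continuous (indeed $C^1$) in $\vt$ because its integrand is continuous in $s$, and two continuous functions agreeing on a dense set agree everywhere. The main obstacle is thus the rigorous justification of the stochastic Fubini interchange under the stated hypotheses; the $L^2$ integrability verification above and the density-and-continuity extension are the two technical ingredients that make it precise.
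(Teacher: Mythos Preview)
Your proposal is correct and follows essentially the same route as the paper: the paper's proof applies a stochastic Fubini theorem for L\'evy integrals (citing Brockwell--Schlemm, Theorem~2.4) to obtain $\int_a^{\tilde\vt}\int_{-\infty}^\infty \frac{\partial g(\vt,u)}{\partial\vt}\,dL_u\,d\vt = \int_{-\infty}^\infty \int_a^{\tilde\vt}\frac{\partial g(\vt,u)}{\partial\vt}\,d\vt\,dL_u$ almost surely, and then refers to Hutton--Nelson (1984, Theorem~2.2) for the remaining steps. What you have written is precisely an explicit unpacking of that argument, including the verification of the Fubini hypotheses from (a)--(b) and the density-plus-continuity extension from (c)--(d) to a single null set.
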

\begin{proof}
An application of Fubini's Theorem for Lévy-integrals (see \cite[Theorem 2.4]{Brockwell:Schlemm})  gives that
for $\wt \vt \in[a,b]$
\begin{eqnarray*}
    \int_a^{\wt \vt} \int_{-\infty}^{\infty} \frac{\partial g(\vt,u)}{\partial \vt}\,d L_u\,d\vt=\int_{-\infty}^{\infty} \int_a^{\wt \vt} \frac{\partial g(\vt,u)}{\partial \vt} \,d\vt\,d L_u  \quad \Pas
\end{eqnarray*}
The remaining of the proof follows the same line as \citet[Theorem 2.2]{Hutton:Nelson}.
\end{proof}

\begin{proof}[Proof of \Cref{Lemma 5.1}]
A combination of \Cref{Lemma A.1}-\Cref{Lemma A.4} result in the MA representation.
A conclusion of the MA representation and
\citet[Theorem 3.5]{fuchsstelzer} is that the process is mixing and therefore, in particular, ergodic.
\end{proof}

\newpage

\section{Simulation results} \label{sec:simulationSI}

\begin{table}[!hp]
 \small
\centering
\begin{tabular}{|l||c|c|c||c|c|c||c|c|c|}
\hline
 \; & \multicolumn{9}{c|}{Indirect estimation} \\
\hline
\; & \multicolumn{9}{c|}{$\xi=0,\gamma=0$ (uncontaminated)} \\
\hline
   \; & \multicolumn{3}{c||}{$r=5$} & \multicolumn{3}{c||}{$r=6$}  &  \multicolumn{3}{c|}{$r=7$}\\
 \hline
  \; & Mean & Bias & Var & Mean & Bias & Var  & Mean & Bias & Var\\
  \hline
    $\vt_1=-1$ &  -1.02333 & -0.02333 & 0.00515  & -1.02802 & -0.02802 & 0.00501 & -1.02079 & -0.02079 & 0.00681\\
 \hline
    $\vt_2=-2$ & -1.98112 & 0.01888 & 0.00905 & -1.97851 & 0.02149 & 0.00865  & -2.00511 & -0.00511 & 0.02403\\
 \hline
    $\vt_3=-2$ &  -2.00188 & -0.00188 & 0.01286  & -2.00098 & -0.00098 & 0.01263  & -1.99128 & 0.00872 & 0.01429\\
 \hline
    $\vt_4=0$ &   0.01253 & 0.01253 & 0.00436 & 0.01781 & 0.01781 & 0.00470  & 0.01541 & 0.01541 & 0.00552\\
 \hline
    $\vt_5=1$  & 1.00248 & 0.00248 & 0.00356 & 0.99551 & -0.00449 & 0.00198 & 0.98748 & -0.01252 & 0.00830\\
\hline
\hline
 \; & \multicolumn{9}{c|}{$\xi=5,\gamma=0.1$} \\
\hline
   \; & \multicolumn{3}{c||}{$r=5$} & \multicolumn{3}{c||}{$r=6$}  &  \multicolumn{3}{c|}{$r=7$}\\
 \hline
  \; & Mean & Bias & Var & Mean & Bias & Var  & Mean & Bias & Var\\
  \hline
    $\vt_1=-1$ & -1.0031 & -0.0031 & 0.0131 & -1.1226 & -0.1226 & 0.4248 & -1.0670 & -0.0670 & 0.1802\\
 \hline
    $\vt_2=-2$ & -2.0444 & -0.0444 & 0.0606 & -2.2005 & -0.2005 & 1.0920 & -2.1946 & -0.1946 & 0.6932 \\
 \hline
    $\vt_3=-2$ & -1.9969 & 0.0031 & 0.0325 & -2.0514 & -0.0514 & 0.2113 & -2.0483 & -0.0483 & 0.1112\\
 \hline
    $\vt_4=0$ & -0.0157 & -0.0157 & 0.0070 & -0.0279 & -0.0279 & 0.0130 & -0.0104 & -0.0104 & 0.0109 \\
 \hline
    $\vt_5=1$  & 0.9147 & -0.0853 & 0.0243 & 0.8876 & -0.1124 & 0.0536 & 0.8370 & -0.1630 & 0.1106  \\
\hline
\hline
 \; & \multicolumn{9}{c|}{$\xi=10,\gamma=0.1$} \\
\hline
   \; & \multicolumn{3}{c||}{$r=5$} & \multicolumn{3}{c||}{$r=6$}  &  \multicolumn{3}{c|}{$r=7$}\\
 \hline
  \; & Mean & Bias & Var & Mean & Bias & Var  & Mean & Bias & Var\\
  \hline
    $\vt_1=-1$ & -1.0031 & -0.0031 & 0.0131 & -1.0204 & -0.0204 & 0.0137 & -1.0319 & -0.0319 & 0.0445 \\
 \hline
    $\vt_2=-2$ & -2.0446 & -0.0446 & 0.0608 & -2.0581 & -0.0581 & 0.0980 & -2.1881 & -0.1881 & 0.7340\\
 \hline
    $\vt_3=-2$ & -1.9966 & 0.0034 & 0.0325 & -1.9994 & 0.0006 & 0.0341 & -2.0324 & -0.0324 & 0.0532\\
 \hline
    $\vt_4=0$ &  -0.0157 & -0.0157 & 0.0070 & -0.0193 & -0.0193 & 0.0104 & -0.0082 & -0.0082 & 0.0098\\
 \hline
    $\vt_5=1$  &  0.9144 & -0.0856 & 0.0243 & 0.9007 & -0.0993 & 0.0466 & 0.8583 & -0.1417 & 0.0747\\
\hline
\hline
 \; & \multicolumn{9}{c|}{$\xi=5,\gamma=1/6$} \\
\hline
   \; & \multicolumn{3}{c||}{$r=5$} & \multicolumn{3}{c||}{$r=6$}  &  \multicolumn{3}{c|}{$r=7$}\\
 \hline
  \; & Mean & Bias & Var & Mean & Bias & Var  & Mean & Bias & Var\\
  \hline
    $\vt_1=-1$ & -0.9476 & 0.0524 & 0.0426 & -1.0935 & -0.0935 & 0.1174 & -0.9831 & 0.0169 & 0.0561 \\
 \hline
    $\vt_2=-2$ &  -2.1688 & -0.1688 & 0.1375 & -2.4429 & -0.4429 & 1.1446 & -2.4168 & -0.4168 & 0.5622  \\
 \hline
    $\vt_3=-2$ & -1.9481 & 0.0519 & 0.0469 & -1.9418 & 0.0582 & 0.1021 & -1.9096 & 0.0904 & 0.1023 \\
 \hline
    $\vt_4=0$ &  -0.0491 & -0.0491 & 0.0101 & -0.0743 & -0.0743 & 0.0231 & -0.0481 & -0.0481 & 0.0192\\
 \hline
    $\vt_5=1$  & 0.6996 & -0.3004 & 0.0705 & 0.5946 & -0.4054 & 0.1107 & 0.5980 & -0.4020 & 0.1281\\
\hline
\hline
 \; & \multicolumn{9}{c|}{$\xi=5,\gamma=0.25$} \\
\hline
   \; & \multicolumn{3}{c||}{$r=5$} & \multicolumn{3}{c||}{$r=6$}  &  \multicolumn{3}{c|}{$r=7$}\\
 \hline
  \; & Mean & Bias & Var & Mean & Bias & Var  & Mean & Bias & Var\\
  \hline
    $\vt_1=-1$ & -0.6035 & 0.3965 & 7.5738 & -0.2362 & 0.7638 & 0.1676 & -0.2224 & 0.7776 & 0.1255\\
 \hline
    $\vt_2=-2$ & -3.8476 & -1.8476 & 25.0781 & -3.1737 & -1.1737 & 0.7282 & -3.1487 & -1.1487 & 0.6338 \\
 \hline
    $\vt_3=-2$ &  -6.0640 & -4.0640 & 417.3265 & -3.2305 & -1.2305 & 5.8199 & -3.1563 & -1.1563 & 4.8466 \\
 \hline
    $\vt_4=0$ & 2.1462 & 2.1462 & 11.3420 & 1.7850 & 1.7850 & 2.7106 & 1.6978 & 1.6978 & 1.7545  \\
 \hline
    $\vt_5=1$  & 0.7653 & -0.2347 & 26.1110 & 1.4551 & 0.4551 & 0.8809 & 1.4777 & 0.4777 & 0.7939\\
\hline
\end{tabular}
\caption{Estimation results for a CARMA$(3,1)$ process with parameter $\vartheta_0=(\vt_1,\vt_2,\vt_3,\vt_4,\vt_5)$ driven by a Brownian motion with  $n=1000$.}\label{Table 7A}
\end{table}

\begin{table}[!hp] \small
\centering
\begin{tabular}{|l||c|c|c||c|c|c||c|c|c|}
\hline
 \; & \multicolumn{9}{c|}{Indirect estimation} \\
\hline
 \; & \multicolumn{9}{c|}{$\xi=0,\gamma=0$ (uncontaminated)} \\
\hline
   \; & \multicolumn{3}{c||}{$n=200$} & \multicolumn{3}{c||}{$n=500$}  &  \multicolumn{3}{c|}{$n=5000$}\\
 \hline
  \; & Mean & Bias & Var & Mean & Bias & Var  & Mean & Bias & Var\\
  \hline
    $\vt_1=-1$ & -1.32232 & -0.32232 & 0.94063  & -1.06069 & -0.06069 & 0.01299 & -1.00534 & -0.00534 & 0.00046\\
 \hline
    $\vt_2=-2$ & -2.28763 & -0.28763 & 1.29387 & -2.03660 & -0.03660 & 0.02302 & -1.99576 & 0.00424 & 0.00034 \\
 \hline
    $\vt_3=-2$ & -2.11439 & -0.11439 & 0.42097 & -1.94386 & 0.05614 & 0.01541 & -1.99930 & 0.00070 & 0.00103\\
 \hline
    $\vt_4=0$ & 0.00287 & 0.00287 & 0.02619 & -0.03115 & -0.03115 & 0.00969   & 0.00089 & 0.00089 & 0.00035 \\
 \hline
    $\vt_5=1$  & 0.88711 & -0.11289 & 0.08930 & 0.93227 & -0.06773 & 0.02011 & 0.99864 & -0.00136 & 0.00027  \\
\hline
 \; & \multicolumn{9}{c|}{$\xi=5,\gamma=0.1$} \\
\hline
   \; & \multicolumn{3}{c||}{$n=200$} & \multicolumn{3}{c||}{$n=500$}  &  \multicolumn{3}{c|}{$n=5000$}\\
 \hline
  \; & Mean & Bias & Var & Mean & Bias & Var  & Mean & Bias & Var\\
  \hline
   $\vt_1=-1$ &  -1.0290 & -0.0290 & 0.0589 & -1.0561 & -0.0561 & 0.0206 & -1.0011 & -0.0011 & 0.0013 \\
   \hline
    $\vt_2=-2$ & -2.2015 & -0.2015 & 0.4188 & -2.1196 & -0.1196 & 0.1148 & -2.0141 & -0.0141 & 0.0026\\
    \hline
    $\vt_3=-2$ & -2.0014 & -0.0014 & 0.1333 & -1.9090 & 0.0910 & 0.0292 & -1.9866 & 0.0134 & 0.0029 \\
    \hline
    $\vt_4=0$ & 0.0044 & 0.0044 & 0.0302 & -0.0641 & -0.0641 & 0.0077 & -0.0140 & -0.0140 & 0.0004 \\
    \hline
    $\vt_51$ &  0.7710 & -0.2290 & 0.1279 & 0.8764 & -0.1236 & 0.0463 & 0.9651 & -0.0349 & 0.0035\\
    \hline
 \; & \multicolumn{9}{c|}{$\xi=10,\gamma=0.1$} \\
\hline
   \; & \multicolumn{3}{c||}{$n=200$} & \multicolumn{3}{c||}{$n=500$}  &  \multicolumn{3}{c|}{$n=5000$}\\
 \hline
  \; & Mean & Bias & Var & Mean & Bias & Var  & Mean & Bias & Var\\
  \hline
   $\vt_1=-1$ &-1.0244 & -0.0244 & 0.0611 & -1.0559 & -0.0559 & 0.0206 & -1.0026 & -0.0026 & 0.0012  \\
   \hline
    $\vt_2=-2$ & -2.2144 & -0.2144 & 0.4986 & -2.1197 & -0.1197 & 0.1158 & -2.0113 & -0.0113 & 0.0022  \\
    \hline
    $\vt_3=-2$ & -1.9920 & 0.0080 & 0.1456 & -1.9089 & 0.0911 & 0.0292 & -1.9880 & 0.0120 & 0.0028\\
    \hline
    $\vt_4=0$ &  0.0026 & 0.0026 & 0.0296 & -0.0641 & -0.0641 & 0.0077 & -0.0141 & -0.0141 & 0.0004\\
    \hline
    $\vt_5=1$ & 0.7643 & -0.2357 & 0.1375 & 0.8760 & -0.1240 & 0.0467 & 0.9672 & -0.0328 & 0.0033\\
    \hline
 \hline
 \; & \multicolumn{9}{c|}{$\xi=5,\gamma=1/6$} \\
\hline
   \; & \multicolumn{3}{c||}{$n=200$} & \multicolumn{3}{c||}{$n=500$}  &  \multicolumn{3}{c|}{$n=5000$}\\
 \hline
  \; & Mean & Bias & Var & Mean & Bias & Var  & Mean & Bias & Var\\
  \hline
   $\vt_1=-1$ & 0.9337 & 0.0663 & 0.1275 & -1.0080 & -0.0080 & 0.0325 & -0.9521 & 0.0479 & 0.0085\\
   \hline
    $\vt_2=-2$ &-2.3455 & -0.3455 & 0.7441 & -2.2549 & -0.2549 & 0.1925 & -2.0912 & -0.0912 & 0.0263\\
    \hline
    $\vt_3=-2$ & -2.0464 & -0.0464 & 0.6561 & -1.8934 & 0.1066 & 0.0527 & -1.9667 & 0.0333 & 0.0114\\
    \hline
    $\vt_4=0$ &0.0009 & 0.0009 & 0.0850 & -0.0617 & -0.0617 & 0.0119 & -0.0370 & -0.0370 & 0.0026 \\
    \hline
    $\vt_5=1$ & 0.5380 & -0.4620 & 0.2457 & 0.6533 & -0.3467 & 0.1024 & 0.7963 & -0.2037 & 0.0301\\
    \hline
 \hline
 \; & \multicolumn{9}{c|}{$\xi=5,\gamma=0.25$} \\
\hline
   \; & \multicolumn{3}{c||}{$n=200$} & \multicolumn{3}{c||}{$n=500$}  &  \multicolumn{3}{c|}{$n=5000$}\\
 \hline
  \; & Mean & Bias & Var & Mean & Bias & Var  & Mean & Bias & Var\\
  \hline
   $\vt_1=-1$ &-0.4340 & 0.5660 & 1.5176 & -0.1225 & 0.8775 & 0.0497 & -0.0498 & 0.9502 & 0.0262  \\
   \hline
    $\vt_2=-2$ &-3.0477 & -1.0477 & 0.8088 & -2.9516 & -0.9516 & 0.1455 & -2.8767 & -0.8767 & 0.0451    \\
    \hline
    $\vt_3=2$ & -4.0795 & -2.0795 & 55.4861 & -2.5088 & -0.5088 & 0.6350 & -2.3780 & -0.3780 & 0.6102 \\
    \hline
    $\vt_4=0$ &  1.9005 & 1.9005 & 18.3389 & 1.3724 & 1.3724 & 0.3679 & 1.2918 & 1.2918 & 0.3086\\
    \hline
    $\vt_5=1$ &  1.0938 & 0.0938 & 3.3676 & 1.7279 & 0.7279 & 0.1833 & 1.8691 & 0.8691 & 0.0508\\
    \hline
 \hline
\end{tabular}
\caption{Estimation results for a CARMA$(3,1)$ process with parameter $\vartheta_0=(\vt_1,\vt_2,\vt_3,\vt_4,\vt_5)$
driven by a Brownian motion with $r=5$.}\label{Table 5A}
\end{table}

\begin{table}[h] \small
\centering
\begin{tabular}{|l|c|c|c||c|c|c|c|}
\hline
 \; & \multicolumn{6}{c|}{$\xi=0, \gamma =0$ (uncontaminated)} \\
  \hline
   \; & \multicolumn{3}{c||}{QMLE} & \multicolumn{3}{c|}{Indirect}  \\
 \hline
  \; & Mean & Bias & Var & Mean & Bias & Var \\
  \hline
   $\vt_1=-0.1$ & -0.10186 & -0.00186 & 0.00064 & -0.12204 & -0.02204 & 0.00365 \\
   \hline
    $\vt_2=-1.2$ &  -1.20025 & -0.00025 & 0.00025 & -1.23417 & -0.03417 & 0.00846\\
    \hline
    $\vt_3=-2.1$ & -2.09793 & 0.00207 & 0.00019  & -2.04382 & 0.05618 & 0.02458 \\
    \hline
    $\vt_4=0$ &  0.00080 & 0.00080 & 0.00001 & -0.00368 & -0.00368 & 0.00212 \\
    \hline
    $\vt_5= 1$ & 1.00766 & 0.00766 & 0.00051 & 0.98731 & -0.01269 & 0.01179\\
    \hline
\end{tabular}
\caption{Estimation results for a CARMA$(3,1)$ process with parameter $\vartheta_0=(\vt_1,\vt_2,\vt_3,\vt_4,\vt_5)$ driven by a Brownian motion with $n=1000$ and $r=5$.}\label{Table 3B}
\end{table}

\end{document}